\providecommand*{\textN}[1]{\|{#1}\|} % Double bar norm
\providecommand*{\Var}{{\operatorname{Var}}}
\providecommand*{\Covv}[1]{\operatorname{Cov}\left({#1}\right)}   % distance
\providecommand*{\Uni}[1]{\operatorname{Uni}({#1})}
\providecommand*{\Span}{\operatorname{span}}     % Span
\providecommand{\Dim}{\operatorname{dim}}            % dimension
\providecommand{\dim}{\Dim}
\providecommand*{\Span}[1]{\operatorname{Span}\left\{{#1}\right\}}     % Span
\providecommand*{\Range}[1]{\operatorname{Range}({#1})}                % Range
\providecommand{\rank}{\operatorname{rank}}                        % rank
\renewcommand{\Im}{\operatorname{Im}}             % imaginary part
\providecommand{\argmin}{\operatorname*{argmin}}  % argument yielding inf
\providecommand{\Id}{\Op{Id}}                     % Identity operator
\providecommand{\CC}{{\cal C}}
\providecommand{\CF}{{\cal F}}
\providecommand{\CN}{{\cal N}}
\providecommand{\CO}{{\cal O}}
\providecommand{\CP}{{\cal P}}
\providecommand{\CR}{{\cal R}}
\providecommand{\CS}{{\cal S}}
\providecommand{\CX}{{\cal X}}
\providecommand{\CY}{{\cal Y}}
\providecommand{\bbE}{\mathbb{E}}
\providecommand{\bbN}{\mathbb{N}}
\providecommand{\bbP}{\mathbb{P}}
\providecommand{\bbR}{\mathbb{R}}
\providecommand{\bbS}{\mathbb{S}}
\providecommand{\bbZ}{\mathbb{Z}}
\providecommand*{\N}[1]{\left\|{#1}\right\|} % Double bar norm
\newcommand*{\SN}[1]{\left|{#1}\right|}      % Single bar norm
\newcommand*{\Op}[1]{\mathsf{#1}} % Operators
\newcolumntype{H}{>{\setbox0=\hbox\bgroup}c<{\egroup}@{}}
\newcommand\independent{\protect\mathpalette{\protect\independenT}{\perp}}
\def\independenT#1#2{\mathrel{\rlap{$#1#2$}\mkern2mu{#1#2}}}
\newcommand{\centerRV}[1]{\tilde{#1}}
\newcommand{\Trace}{\textrm{Trace}}
\newcommand{\lsmean}[2]{\hat{\bbE}_{#1}{#2}}
\newcommand{\MIMPTrue}{P}
\newcommand{\MIMP}{\hat P}
\newcommand{\MIMPJ}[1]{\hat P_{{#1}}}
\newcommand{\PVX}{PX}
\newcommand{\QVX}{QX}
\newcommand{\minRho}{\rho_{J,\min}}
\newcommand{\maxStandardNorm}{\omega_{J,\max}}
\newcommand{\ra}[1]{\renewcommand{\arraystretch}{#1}}
\newcommand{\MSE}[2]{\textrm{MSE}\left({#1}, {#2}\right)}
\newlength\stextwidth
\newcommand\boxwidth[3][c]{%
  \settowidth{\stextwidth}{#2}%
  \makebox[\stextwidth][#1]{#3}%
}
\theoremstyle{plain}
\newtheorem{theorem}{Theorem}
\newtheorem{proposition}[theorem]{Proposition}
\newtheorem{lemma}[theorem]{Lemma}
\newtheorem{corollary}[theorem]{Corollary}
\theoremstyle{definition}
\newtheorem{definition}[theorem]{Definition}
\newtheorem{example}{Example}
\newtheorem{remark}[theorem]{Remark}
\begin{document}

\begin{frontmatter}
%%%%%%%%%%%%%%%%%%%%%%%%%%%%%%%%%%%%%%%%%%%%%%
%%                                          %%
%% Enter the title of your article here     %%
%%                                          %%
%%%%%%%%%%%%%%%%%%%%%%%%%%%%%%%%%%%%%%%%%%%%%%
\title{Estimating multi-index models \\ with response-conditional least squares}
%\title{A sample article title with some additional note\thanksref{T1}}
\runtitle{Response-conditional least squares}
%\thankstext{T1}{A sample of additional note to the title.}

\begin{aug}
%%%%%%%%%%%%%%%%%%%%%%%%%%%%%%%%%%%%%%%%%%%%%%
%%Only one address is permitted per author. %%
%%Only division, organization and e-mail is %%
%%included in the address.                  %%
%%Additional information can be included in %%
%%the Acknowledgments section if necessary. %%
%%%%%%%%%%%%%%%%%%%%%%%%%%%%%%%%%%%%%%%%%%%%%%
\author[A]{\fnms{Timo} \snm{Klock}\ead[label=e1]{timo@simula.no}},
\author[B]{\fnms{Alessandro} \snm{Lanteri}\ead[label=e2]{alessandro.lanteri@unito.it}}
\and
\author[C]{\fnms{Stefano} \snm{Vigogna}\ead[label=e3]{vigogna@dibris.unige.it}}
%%%%%%%%%%%%%%%%%%%%%%%%%%%%%%%%%%%%%%%%%%%%%%
%% Addresses                                %%
%%%%%%%%%%%%%%%%%%%%%%%%%%%%%%%%%%%%%%%%%%%%%%
\address[A]{Machine Intelligence Department, Simula Research Laboratory, \printead{e1}}

\address[B]{ESOMAS, Universit\`a degli Studi di Torino and Collegio Carlo Alberto, \printead{e2}}

\address[C]{MaLGa Center, DIBRIS, Universit\`a degli Studi di Genova, \printead{e3}}
\end{aug}

\begin{abstract}
The multi-index model is a simple yet powerful high-dimensional regression model
which circumvents the curse of dimensionality
assuming $ \bbE [ Y | X ] = g(A^\top X) $
for some unknown index space $A$ and link function $g$.
In this paper we introduce a method for the estimation of the index space,
and study the propagation error of an index space estimate in the regression of the link function.
The proposed method approximates the index space
by the span of linear regression slope coefficients computed over level sets of the data.
Being based on ordinary least squares,
our approach is easy to implement and computationally efficient.
We prove a tight concentration bound that shows $N^{-1/2}$-convergence,
but also faithfully describes the dependence on the chosen partition of level sets,
hence giving indications on the hyperparameter tuning.
The estimator's competitiveness is confirmed
by extensive comparisons with state-of-the-art methods, both on synthetic and real data sets.
As a second contribution,
we establish minimax optimal generalization bounds for k-nearest neighbors and piecewise polynomial regression
when trained on samples projected onto any $N^{-1/2}$-consistent estimate of the index space,
thus providing complete and provable estimation of the multi-index model.
\end{abstract}

\begin{keyword}[class=MSC2010]
\kwd[Primary ]{62G05}
\kwd[; secondary ]{62G08}
\kwd{62H99}
\end{keyword}

\begin{keyword}
\kwd{Multi-index model}
\kwd{sufficient dimension reduction}
\kwd{nonparametric regression}
\kwd{finite sample bounds}
\end{keyword}

\end{frontmatter}

%%%%%%%%%%%%%%%%%%%%%%%%%%%%%%%%%%%%%%%%%%%%%%
%%%% Main text entry area:

\section{Introduction}
\label{sec:sdr_intro}
Many recent advances in the analysis of high\hyp{}dimensional data
are based on the observation that
real-world data are inherently structured,
and the relationship between variables, features and responses is often of a lower dimensional nature \cite{adragni2009sufficient,Bickel2007,NIPS2011_4455,NIPS2013_5103,liao2016learning,GMRARegression}.
A popular model incorporating this structural assumption is the \emph{multi-index model},
which poses the relation between a predictor $ X \in \bbR^{D} $ and a response $ Y \in \bbR $ as
\begin{align}
\label{eq:MIM}
Y = g(A^\top X) + \zeta,
\end{align}
where $A \in \bbR^{D\times d}$ is an unknown full column rank matrix  with $d\ll D$,
$ g : \bbR^d \to \bbR $ is an unknown function,
and $\zeta$ is a noise term with $\bbE[\zeta|X] = 0$, independent of $X$ given $A^\top X$.
In the following we refer to $g$ as the \emph{link function}
and $A$ as the \emph{index space},
assuming, without loss of generality, that the columns of $A$ are orthonormal \cite{fornasier2012learning}.
Model \eqref{eq:MIM} asserts that the information required to predict the conditional expectation $f(x) := \bbE[Y|X=x]$
is encoded in the distribution of $A^\top X$.
Therefore, knowing the projection $ P := AA^\top $ allows to estimate $f$ in a nonparametric fashion
with a number of samples scaling with the intrinsic dimension $d$, rather than the ambient dimension $D$.

Ways to estimate the index space have been studied extensively over
the years and by now several methods have been proposed.
Most of them originate from
the statistical literature, starting with the seminal work of \cite{li1991sliced}
and going forward with \cite{dennis2000save,
li2007directional,li2005contour}.
In recent years,
the problem has gained popularity also in the machine learning community, due to its relation and similarity
to (shallow) neural networks \cite{fornasier2012learning, fornasier2018identification,hemant2012active}.
Despite the variety of available approaches, there is no distinctly \emph{best} method:
some estimators are better suited for practical purposes as they are computationally efficient and easy to implement, while others generally enjoy better theoretical guarantees.
We provide an extensive overview in Section \ref{subsec:overview_index_space_estimation}.

In this work we derive and analyze a method for estimating $f$ under the model assumption \eqref{eq:MIM} from a given data set $\{(X_i,Y_i) : i = 1,\dots,N\}$, where $(X_i,Y_i)$ are independent copies of $(X,Y)$.
First, we construct an estimate $\MIMP$ of the projection $\MIMPTrue$ based on the span of response-conditional least-squares vectors of the data.
Once $\MIMP$ has been computed, the second step is a regression task on the reduced data set $\{(\MIMP X_i, Y_i) : i = 1,\dots,N\}$, which can
be solved by classical nonparametric estimators such as piecewise polynomials or kNN-regression.
The proposed method is attractive for practitioners due to its simplicity and
efficiency, with almost no parameter adjustment needed.
Furthermore, it is provable,
with strong theoretical guarantees neatly derivable from a few reasonable assumptions.
We establish tight concentration bounds describing
the estimator's performance in the finite sample regime.
In particular, we prove that $\Vert \MIMP - \MIMPTrue\Vert \in \CO(N^{-1/2})$,
and determine the explicit dependence of the constants on the parameters involved.
A data-driven approximation of the index space error empirically confirms the tightness
of our concentration bound,
providing guidance for hyperparameter tuning.
Moreover, to the best of our knowledge,
we
are the first ones to
provide generalization
guarantees for model \eqref{eq:MIM} that take into account the propagation of the projection error $\Vert \MIMP - \MIMPTrue\Vert$ into the reduced regression problem.
Specifically, we analyze two popular regression
methods,
namely k-nearest neighbors regression (kNN) and piecewise polynomial regression, and show that
the minimax optimal $d$-dimensional estimation rate is achieved if $\MIMP$ is \emph{any} index estimate such that $\Vert \MIMP - \MIMPTrue\Vert \in \CO(N^{-1/2})$.

\subsection{Related work on index space estimation}
\label{subsec:overview_index_space_estimation}
Many methods for estimating the index space have been developed in
the statistical literature under the name of \emph{sufficient dimension reduction} \cite{Li2018},
where the multi-index model is relaxed to
\begin{align}
\label{eq:EDR_assumption}
&Y \independent X | A^\top X .
\end{align}
Note that this setting generalizes our problem since \eqref{eq:MIM} and $\varepsilon \independent X | A^\top X$ imply \eqref{eq:EDR_assumption}.
A space $\Im(A)$ satisfying \eqref{eq:EDR_assumption} is called
a \emph{dimension reduction subspace}, and if
the intersection of such spaces satisfies \eqref{eq:EDR_assumption} it is called
\emph{central subspace}.
Except for degenerate cases, a unique central subspace exists
\cite{cook1994interpretation, cook2009regression}. One can also consider a model where  \eqref{eq:EDR_assumption} is replaced by
$Y \independent \bbE[Y|X] | A^\top X$, which leads to the definition of \emph{central mean subspace} \cite{cook2002dimension}.
In the case of model \eqref{eq:MIM} with $\varepsilon \independent X | A^\top X$, the space $\Im(A)$ is both the central subspace and the central mean
subspace \cite{cook2002dimension}. Thus, we will treat related research under the same umbrella.

The methods for sufficient dimension reduction can broadly be grouped into \emph{inverse regression based methods}
and \emph{nonparametric methods} \cite{adragni2009sufficient,ma2013review}.
The first group reverses the regression dependency between $X$ and $Y$ and uses inverse
statistical moments to construct a matrix $\Lambda$ with $\Im(\Lambda) \subseteq \Im(A)$.
The most prominent representatives are sliced inverse regression (SIR/SIRII) \cite{li1991sliced,SIR-II},
sliced average variance estimation (SAVE) \cite{dennis2000save}, and contour regression/directional regression (CR/DR) \cite{li2007directional,li2005contour}
(see Table \ref{tab:inverse_regression} for the corresponding definition
of $\Lambda$).
Linear combinations of related matrices $\Lambda$ have been
called \emph{hybrid methods} \cite{zhu2007hybrid}. Furthermore, in the case where $X$ follows a normal distribution, two popular
methods are principal Hessian directions (pHd) \cite{li1992principal} and iterative
Hessian transformations (iHt) \cite{cook2002dimension}.
In this setting, $\Lambda$
is the averaged Hessian matrix of the regression function, which can be efficiently
computed using Stein's Lemma.

If $\Im(\Lambda) \subseteq \Im(A)$, eigenvectors corresponding to nonzero eigenvalues of $\Lambda$ yield an unbiased
subspace of the index space $\Im(A)$. A typical assumption to guarantee this
is the \emph{linear conditional mean} (LCM), given by $\bbE[X|PX] = PX$. It holds, for example,
for all elliptically symmetric distributions \cite{li1991sliced,ma2013review}.
Methods based on second order moments usually need in addition
the \emph{constant conditional variance} assumption (CCV),
which requires $\Covv{X|PX}$ to be nonrandom.
In particular, the normal distribution satisfies both LCM and CCV.
If $\Im(\Lambda) = \Im(A),$ a method is called \emph{exhaustive}.
A condition to ensure exhaustiveness is
$\bbE[v^\top Z|Y]$ being non-degenerate (\emph{i.e.} not
almost surely equal to a constant) for all nonzero $v \in \Im(A)$,
where $Z$ is the standardization of $X$.
In Table \ref{tab:inverse_regression} we denote this condition by RCP (random conditional projection),
and by RCP$^2$ when $\bbE[v^\top Z|Y]$ is replaced by $\bbE[(v^\top Z)^2|Y]$.

\begin{table}[h]
\begin{center}
\ra{1.2}
\scriptsize
\resizebox{\columnwidth}{!}{%
\begin{tabular}{l l l l}
    Method & Matrix $\Lambda$ & $\Im(\Lambda) \subseteq \Im(A)$ & $\Im(\Lambda) = \Im(A)$ \\ \midrule
    \boxwidth[l]{SAVE \ \cite{dennis2000save}}{SIR \hfill \cite{li1991sliced}} & $\Covv{\bbE[Z|Y]}$ & LCM &
     RCP
     \\ \midrule
    \boxwidth[l]{SAVE \ \cite{dennis2000save}}{SIRII \hfill \cite{SIR-II}}
    &
    $\bbE(\Covv{Z|Y}-\bbE\Covv{Z|Y})^2$
    &
    LCM and CCV &
    N/A
    \\ \midrule
    SAVE \ \cite{dennis2000save} & $\bbE(\Id - \Covv{Z|Y})^2$ & LCM and CCV & RCP or RCP$^2$\\ \midrule
    \boxwidth[l]{SAVE \ \cite{dennis2000save}}{DR \hfill \cite{li2007directional}}
    &
    $ \bbE( 2\Id - \Covv{Z-Z') | Y , Y' } )^2 $
    &
    LCM and CCV & RCP or RCP$^2$\\ \midrule
    \boxwidth[l]{SAVE \ \cite{dennis2000save}}{pHd \hfill \cite{li1992principal}} & $\bbE(Y - \bbE Y)(X - \bbE X)(X- \bbE X)^\top$ & normal $X$ & N/A
    \\
\bottomrule
\end{tabular}
}
\end{center}
\caption{
A summary of prominent inverse regression based methods (plus pHd).
We let $Z$ be the standardized $X$,
and $(Z',Y')$ an independent copy of $(Z,Y)$.
The table omits details on contour regression \cite{li2005contour} (strongly related to DR), iterative Hessian transformations \cite{cook2002dimension} (related to pHd),
and hybrid approaches \cite{zhu2007hybrid} (linear combinations of methods above).
}
\label{tab:inverse_regression}
\end{table}

As inverse regression based methods require only computation of finite sample means
and covariances, they are efficient and easy to implement. The matrix $\Lambda$
is usually approximated by partitioning the range $\Im(Y) = \cup_{\ell=1}^{J}\CR_{J,\ell}$,
and approximating statistics of $X|Y$ by empirical quantized statistics of $X|Y \in \CR_{J,\ell}$.
Therefore, only a single hyperparameter, the number of subsets $J$, needs to be tuned. A strategy
for choosing $J$ optimally is not known \cite{ma2013review}.

Nonparametric methods try to estimate the gradient field of the regression function $f$
based on the observation that the $d$ leading eigenvectors of $\bbE[\nabla f(X) \nabla f(X)^\top]$ (assuming $f$ is differentiable)
span the index space.
The concrete implementation of this idea differs between methods.
Popular examples are
minimum average variance estimation (MAVE),
outer product of gradient estimation (OPG),
and variants thereof \cite{xia2002adaptive}.
While MAVE converges
to the index space under mild assumptions,
it suffers from the curse of dimensionality due to nonparametric estimation of gradients of $f$.
The inverse MAVE (IMAVE) \cite{xia2002adaptive}
combines MAVE with inverse regression,
achieving $N^{-1/2}$-consistency under LCM.
Sliced regression \cite{SR} collects local MAVE estimates on response slices,
producing $N^{-1/2}$-consistent index estimates free of LCM for $ d \le 3 $.
Furthermore,
iterative generalizations of the average derivative estimation (ADE) \cite{ADE}
have been proved to be $N^{-1/2}$-consistent for $d \leq 3$ and $d\leq4$ \cite{dalalyan2008new,hristache2001structure}.

Compared to inverse regression methods, nonparametric methods rely on less stringent assumptions,
but are computationally more demanding, require more hyperparameter tuning, and are often more
complex to analyze. The relation between inverse regression and nonparametric methods has been
investigated in \cite{ma2012semiparametric,ma2013efficient} by introducing semiparametric methods.
The authors showed that
the computational efficiency
and simplicity of inverse regression methods come at the cost of assumptions such as LCM/CCV.
Moreover, they demonstrated that inverse regression methods
can be modified by including a nonparametric estimation step to achieve theoretical guarantees even
when LCM/CCV do not hold.

The work presented above originates from statistical literature, and, to the best of our
knowledge, focuses only on the index space estimation, completely omitting the subsequent regression step. This is different in the machine learning
community, where estimation of both $A$ and $f$
has been recently studied for the case $d=1$ \cite{ganti2017learning,
kakade2011efficient,kalai2009isotron,kereta2019nonlinear,kuchibhotla2016efficient,lanteri2020conditional,radchenko2015high}. The problem was also considered for $d\geq 1$ in
an active sampling setting, where the user is allowed to query data points $(X,Y)$ and the goal is to minimize the number of queries \cite{fornasier2012learning,
hemant2012active}. Moreover, model \eqref{eq:MIM} has strong ties
with shallow neural network models $f(x) = \sum_{i=1}^{m}g_i(a_i^\top x)$,
which are currently actively investigated
\cite{fornasier2018identification,janzamin2015beating,mondelli2018connection,soudry2016no}.

\subsection{Content and contributions of this work}
\label{subsec:content_and_contributions}
\paragraph*{Index space estimation}
We propose to estimate the index space
by the span of ordinary least squares solutions computed over level sets of the data (see Section \ref{sec:mim_algorithm} for details).
We call our method \emph{response-conditional least squares} (RCLS).
Our approach shares typical benefits of inverse regression based techniques: it is computationally
efficient, and easy to implement, as only a
single hyperparameter (number of level sets) needs to be specified. An additional advantage is
that ordinary least squares can be readily exchanged by variants leveraging
priors such as sparsity \cite{larsson2007linear,raskutti2011minimax} and further.

On the density level, we guarantee that RCLS finds a subspace $\Im(P_J)$ of $\Im(P)$
under the LCM assumption.
In the finite sample regime, we prove a concentration bound
\begin{equation} \label{eq:index_bound_intro}
\N{\MIMPJ{J} - P_J}_F \lesssim C(J) \sqrt{\frac{D}{N}} ,
\end{equation}
disentangling the influence of the number of samples $N$
and the number of level sets $J$ on the performance of our estimator (Corollary \ref{cor:mim_projection_error}).
Moreover, we show empirically that $C(J)$ in \eqref{eq:index_bound_intro}
tightly characterizes the influence of the hyperparameter on the estimator's performance, providing
guidance to how to choose it in practice.

\paragraph*{Link function regression}
We analyze the performance of kNN regression and piecewise polynomial regression
(with respect to a dyadic partition), when trained on the perturbed data set
$\{(\MIMP X_i, Y_i) : i = 1,\dots,N\}$ instead of $\{(\MIMPTrue X_i, Y_i): i = 1,\dots,N\}$,
where $\MIMP$ is \emph{any} estimate of the index projection $\MIMPTrue$.
Specifically, we prove for sub-Gaussian $X$, $(L,s)$-smooth $g$ (see Definition \ref{def:smoothness}), and almost surely bounded $f(X)$,
that the estimator $\hat f$ satisfies the generalization bound (Theorems \ref{thm:regression_error_kNN} and \ref{thm:regression_error_piecewise_pols})
\begin{equation}
\label{eq:generalization_bound_intro}
\bbE \left(\hat f(X) - f(X)\right)^2 \lesssim  N^{-\frac{2s}{2s+d}} + \N{\MIMP - \MIMPTrue}^{\min\{2s,2\}},
\end{equation}
where $s \in (0,1]$ in the case of kNN, and $s \in (0,+\infty)$ in the case of piecewise polynomials.
The bound \eqref{eq:generalization_bound_intro}
shows that optimal estimation rates (in the minimax sense) are achieved by traditional regressors for $d=1$ and $s > \frac{1}{2}$,
or $d \geq 2$ and any $ s > 0 $, provided $\Vert \MIMP - \MIMPTrue\Vert \in \CO(N^{-1/2})$.
In particular, combining \eqref{eq:index_bound_intro} and \eqref{eq:generalization_bound_intro}
we obtain that RCLS paired with piecewise polynomial regression produces an optimal estimation of the multi-index model.

\subsection{Organization of the paper}
\label{subsec:organization}
Section \ref{sec:mim_algorithm} describes RCLS for index space estimation.
Section \ref{sec:mim_index_space_estimation} presents theoretical guarantees
on the population level and in the finite sample regime. Section \ref{sec:mim_regression}
establishes the generalization bound \eqref{eq:generalization_bound_intro}.
Section \ref{sec:mim_numerical_examples} compares RCLS with state-of-the-art methods on synthetic and real data sets.

\subsection{General notation}
\label{subsec:notation}
We let $\bbN_0$ be the set of natural numbers including $0$,
and $[J] := \{1,\ldots,J\}$.
We write $ a \vee b := \max\{a,b\}$ and $a \wedge b := \min\{a,b\}$.
Throughout the paper, $C$ stands for a universal constant that may change on each appearance.
We use $\Vert \cdot \Vert$ for the Euclidean norm of
vectors,
and $\N{\cdot}$, $\N{\cdot}_F$ for the spectral and Frobenius matrix norms, respectively.
For a symmetric real matrix $A \in \bbR^{D\times D}$,
we denote the ordered eigenvalues as $\lambda_1(A) \geq \cdots \geq \lambda_D(A)$
and the corresponding eigenvectors as $u_1(A),\dots,u_D(A)$.

We denote expectation and covariance of a random vector $X$ by $\bbE [X]$ and $\Covv{X}$, respectively,
and let $\centerRV{X}: = X - \bbE [X]$.
The sub-Gaussian norm of a random variable $Z$ is
$\N{Z}_{\psi_2} := \inf\{t > 0 : \bbE\exp(Z^2/t^2) \leq 2\}$.
Similarly, the sub-Exponential norm is
$\N{Z}_{\psi_1}=\inf\{t > 0 : \bbE\exp(\SN{Z}/t) \leq 2\}$.
Finally, we abbreviate the mean squared error of an estimator $\smash{\hat f}$ of $f$ by $ \textrm{MSE}( \smash{\hat f} , f ) := \bbE | \smash{\hat f(X)} - f(X) |^2 $.

\section{Index space estimation by response-conditional least squares}
\label{sec:mim_algorithm}

\begin{algorithm}[b]
\caption{Index space estimation via RCLS}
\label{alg:index_space_estimation}
\begin{algorithmic}
  \REQUIRE Parameters $J$ and $\tilde d$, data set $\{(X_i,Y_i) : i \in [N]\}$
  \STATE Split data into $\{\CX_{J,\ell} : \ell \in [J]\}$ and $\{\CY_{J,\ell} : \ell \in [J]\}$ according to \eqref{eq:level_set_splitting}
  \FOR{$\ell=1,\ldots,J$}
    \STATE $\hat{b}_{J,\ell} :=  \hat \Sigma_{J,\ell}^{\dagger}\,\lsmean{(\CX_{J,\ell}, \CY_{J,\ell})}{(X - \lsmean{\CX_{J,\ell}}{X})(Y-\lsmean{\CY_{J,\ell}}{Y})}$
    \STATE $\hat \rho_{J,\ell} := \SN{\CX_{J,\ell}}N^{-1}$
  \ENDFOR
\STATE $\hat M_J := \sum_{\ell=1}^{J}  \hat \rho_{J,\ell} \hat b_{J,\ell}\hat b_{J,\ell}^\top$
\ENSURE $\MIMPJ{J}(\tilde d)$, the orthoprojector onto the leading $\tilde d$ eigenvectors of $\hat M_J$
\end{algorithmic}
\end{algorithm}

\noindent
We first describe response-conditional least squares (RCLS) and then highlight
advantages and disadvantages of the approach compared to
other methods in the literature (see Section \ref{subsec:overview_index_space_estimation}).

\paragraph*{RCLS} For the sake of simplicity we assume here that $\Im(Y)$ is bounded. First, let  $\Im(Y) = \cup_{\ell = 1}^{J} \CR_{J,\ell}$
be a dyadic decomposition of the range into $J$ intervals. For example,
this means  $\CR_{J,\ell} := [\frac{\ell-1}{J}, \frac{\ell}{J})$ in the case $\Im(Y) = [0,1)$.
We assign the samples $\{(X_i,Y_i) : i \in [N]\}$ to subsets
\begin{align}
\label{eq:level_set_splitting}
\CY_{J,\ell} := \{Y_i : Y_i \in \CR_{J,\ell}\},\quad \textrm{and}\quad \CX_{J,\ell} := \{X_i : Y_i \in \CR_{J,\ell}\},
\end{align}
which we refer to as level sets in the following.
On each level set we solve an ordinary least squares problem.
That is, for the empirical conditional covariance matrix $\hat\Sigma_{J,\ell} :=\lsmean{\CX_j}(X-\lsmean{\CX_{J,\ell}}{X})(X-\lsmean{\CX_{J,\ell}}{X})^\top$,
where $\lsmean{\CX_{J,\ell}}{X}$ is the usual finite sample mean of $X$ over $\CX_{J,\ell}$,
we compute vectors
\begin{align}
\label{eq:def_sample_b}
\hat b_{J,\ell} := \hat \Sigma_{J,\ell}^{\dagger} \lsmean{(\CX_{J,\ell},\CY_{J,\ell})}{(X - \lsmean{\CX_{J,\ell}}{X})(Y - \lsmean{\CY_{J\ell}}{Y}}).
\end{align}
Intuitively speaking, $\hat b_{J,\ell}$ can be seen as an estimate of the averaged
gradient of the regression function $f$ over the level set $\CR_{J,\ell}$. Taking into
account the model $f(x) = g(A^\top x)$, $\hat b_{J,\ell}$ should therefore become increasingly close
to the index space $\Im(A)$ as the number of samples in $\CX_{J,\ell}$ increases.
This motivates to approximate the index space by the leading eigenvectors of an outer product matrix
of vectors $\{\hat b_{J,\ell} : \ell \in [J]\}$. We set $\hat \rho_{J,\ell} := \SN{\CX_{J,\ell}}N^{-1}$ and then compute
\begin{align}
\label{eq:def_sample_M_J}
\MIMPJ{J}(\tilde d) := \sum\limits_{i=1}^{\tilde d}u_i(\hat M_J) u_i(\hat M_J)^\top,
\quad \textrm{where}\quad \hat M_J := \sum_{\ell=1}^{J}  \hat \rho_{J,\ell} \hat b_{J,\ell}\hat b_{J,\ell}^\top.
\end{align}
The parameter $\tilde d \leq d$ is user-specified and ideally equals $\dim(\Span\{\hat b_{J,\ell} : \ell \in [J]\})$ in the limit
$N\rightarrow \infty$. If this value is unknown, we select it via model selection techniques
or by inspecting the spectrum of $\hat M_J$. The procedure is summarized in Algorithm \ref{alg:index_space_estimation}.

\begin{remark}[Choice of partition]
\label{rem:choice_of_partition}
One possible way to decompose $\Im(Y)$ is by dyadic cells.
However, the analysis in Section \ref{sec:mim_index_space_estimation}
does not require the dyadic structure and can instead be conducted with arbitrary partitions.
\end{remark}

\begin{remark}[Algorithmic complexity]
\label{rem:algorithmic_complexity}
The main computational demand is constructing the vectors $\{\hat b_{J,\ell} : \ell \in [J]\}$. Assuming we use a partition of disjoint level sets $\CR_{J,\ell}$, i.e. each sample is only used once in the construction of $\hat M_J$, the cost for this is $\CO(\sum_{\ell=1}^{J}\SN{\CX_{J,\ell}}D^2) = \CO(ND^2)$.
\end{remark}

\paragraph*{Comparison of RCLS with inverse regression methods}
In RCLS, response conditioning serves to localize and produce multiple estimates
rather than induce isotropy in the marginal distribution
(\emph{e.g.} no conditioning is required in the single-index case);
hence, it is not a typical inverse regression method.
At the same time, it shares the same
general advantages: it is simple, computationally efficient, and provable.
While in inverse regression no optimal slicing procedure is known,
RCLS admits a tight parameter characterization which allows for a straight optimization.
On par with all inverse regression methods, RCLS requires the LCM assumption.
Although it is often more or at least as accurate as second order inverse regression methods,
such as CR and DR,
it does not need the CCV assumption.
This is a major generalization since,
as pointed out in \cite{ma2013review},
assuming both LCM and CCV for all directions reduces $X$ to the normal distribution.
RCLS low computational cost matches that of typical inverse regression estimates (except CR, which is $\CO(N^2D^2)$).

\paragraph*{Comparison of RCLS with nonparametric methods}
Essentially relying on gradient field estimation,
RCLS has strong ties with nonparametric methods,
but it has lower computation cost and it is easier to implement.
Note that nonparametric methods typically involve kernel smoothing,
leading to complexities quadratic in the sample size $N$.
Such costs are linearizable resorting for example to nearest neighbor truncation,
but while naive kNN still requires the computation of $\CO(N^2)$ distances,
hierarchical structures for fast neighbor search, such as k-d and cover trees \cite{kd-tree,cover-tree}, imply constants exponential in the dimension $D$,
not to mention the overhead resulting from cross-validating the number of neighbors.
Cross-validation is in principle also required for bandwidth selection,
even for joint tuning of two different bandwidths \cite{SR},
since optimal choices beyond rules of thumb (\emph{e.g.} the ``normal reference'') are to date an open problem.
Last but not least, kernel estimates are sensitive to the curse of dimensionality,
whose overcoming requires further complications, algorithmic tweaks, initializations and iterative procedures \cite{SR,xia2002adaptive}.

\section{Guarantees for RCLS}
\label{sec:mim_index_space_estimation}
We introduce population counterparts of $\hat b_{J,\ell}$ and $\hat M_J$ given by
\begin{align*}
b_{J,\ell} := \Sigma_{J,\ell}^{\dagger}\Covv{X,Y|Y \in \CR_{J,\ell}}\quad \textrm{ and }\quad M_J := \sum_{\ell=1}^{J}\rho_{J,\ell} b_{J,\ell} b_{J,\ell}^\top,
\end{align*}
where $\Sigma_{J,\ell} := \Covv{X|Y \in \CR_{J,l}}$, and $\rho_{J,\ell} := \bbP(Y \in \CR_{J,\ell})$.
All quantities thus far are defined through the random vector $(X,Y)$ without using the regression function.
In fact, in this section we can technically avoid specifying the regression function by
assuming a more general setting, where $\Im(\MIMPTrue)$ is defined as the minimal dimensional subspace with
\begin{enumerate}[label=(A\arabic*)]
\item\label{ass_MIM:independence} $Y \independent X|\PVX$.
\end{enumerate}
As mentioned in Section \ref{subsec:overview_index_space_estimation}, \ref{ass_MIM:independence}
uniquely defines $\Im(P)$ except for degenerate cases, which we exclude here.
Moreover, \ref{ass_MIM:independence} with $P=AA^\top$ generalizes \eqref{eq:MIM} and $\zeta \independent X|A^\top X$.

In the following analysis, we also require the following assumptions.
\begin{enumerate}[label=(A\arabic*)]
\setcounter{enumi}{1}
\item\label{ass_MIM:linear_conditional_mean}
$\bbE[X|\PVX] = \PVX$ almost surely;
\item\label{ass_MIM:subGauss} $X$ and $Y$ are sub-Gaussian random variables.
\end{enumerate}

\noindent
\ref{ass_MIM:linear_conditional_mean} is the LCM assumption introduced in Section \ref{subsec:overview_index_space_estimation}
and is required in all inverse regression based techniques like SIR, SAVE or DR. It is satisfied for example
for any elliptical distribution and ensures $\Im(M_J)\subseteq \Im(A)$ as shown in Proposition \ref{prop:MIM_pop_level_prop} below.
\ref{ass_MIM:subGauss} is maximally general to use the tools developed in the framework of sub-Gaussian random variables,
namely finite sample concentration bounds. Examples of sub-Gaussian random variables include bounded distributions,
the normal distribution, or more generally random variables for which all one-dimensional marginals have tails
that exhibit a Gaussian-like decay after a certain threshold \cite{vershynin2018high}.

\subsection{Population level}
The population level results are summarized in the following proposition.
\label{subsec:index_space_pop_level}
\begin{proposition}
\label{prop:MIM_pop_level_prop}
If $(X,Y)$ satisfies  \ref{ass_MIM:independence} and \ref{ass_MIM:linear_conditional_mean},
then $b_{J,\ell} \in \Im(A)$ for any $\ell \in [J]$ and any $J$. Also, $\Im(M_J)\subseteq \Im(A)$, with equality if
$\lambda_d\left(M_{J}\right) > 0$.
\end{proposition}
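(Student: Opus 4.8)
The plan is to run the entire argument through the orthogonal splitting $\bbR^{D} = \Im(P)\oplus\Im(P)^{\perp}$, where $\Im(P) = \Im(A)$ since the columns of $A$ are orthonormal; write $Q := I - P$ for the complementary projector. I would first establish two facts, uniformly over $J$ and $\ell$: (i) $\bbE[X\mid Y\in\CR_{J,\ell}]$ and $\bbE[XY\mid Y\in\CR_{J,\ell}]$, hence $\Covv{X,Y\mid Y\in\CR_{J,\ell}}$, lie in $\Im(P)$; and (ii) the conditional covariance $\Sigma_{J,\ell}$ is block-diagonal for this splitting, $\Sigma_{J,\ell} = P\Sigma_{J,\ell}P + Q\Sigma_{J,\ell}Q$. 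Granting (i) and (ii), $\Sigma_{J,\ell}^{\dagger}$ is again block-diagonal — the pseudoinverse of a symmetric block-diagonal matrix is the sum of the blockwise pseudoinverses — so $\Sigma_{J,\ell}^{\dagger}$ maps $\Im(P)$ into $\Im(P)$, and applying it to $\Covv{X,Y\mid Y\in\CR_{J,\ell}}\in\Im(P)$ yields $b_{J,\ell}\in\Im(P)=\Im(A)$.

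Both (i) and (ii) come from one identity fusing \ref{ass_MIM:independence} and \ref{ass_MIM:linear_conditional_mean}. Since $Y\independent X\mid PX$ and $QX$ is a measurable function of $X$, we have $Y\independent QX\mid PX$; and \ref{ass_MIM:linear_conditional_mean} gives $\bbE[QX\mid PX] = Q\,\bbE[X\mid PX] = Q\,(PX) = 0$, as $QP = 0$. Hence, for any scalar function $h$ of $Y$ for which the products below are integrable (which holds under the paper's sub-Gaussian standing assumption),
\[
\bbE\bigl[\,QX\,h(Y)\,\bigr] \;=\; \bbE\bigl[\,\bbE[QX\mid PX]\;\bbE[h(Y)\mid PX]\,\bigr] \;=\; 0,
\]
and, conditioning on $PX$ and pulling out the $\sigma(PX)$-measurable factor $PX$ in the same way, also $\bbE[\,PX\,(QX)^{\top}h(Y)\,] = 0$.

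Instantiating $h = \mathbf{1}_{\CR_{J,\ell}}$ and $h(y) = y\,\mathbf{1}_{\CR_{J,\ell}}(y)$ in the first identity and dividing by $\rho_{J,\ell} = \bbP(Y\in\CR_{J,\ell})$ gives $\bbE[QX\mid Y\in\CR_{J,\ell}] = 0$ and $\bbE[QXY\mid Y\in\CR_{J,\ell}] = 0$, proving (i). Instantiating $h = \mathbf{1}_{\CR_{J,\ell}}$ in the second identity, together with $\bbE[QX\mid Y\in\CR_{J,\ell}] = 0$, shows the off-diagonal block $\Covv{PX,\,QX\mid Y\in\CR_{J,\ell}}$ vanishes, proving (ii); hence $b_{J,\ell}\in\Im(A)$. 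Finally, $M_J = \sum_{\ell}\rho_{J,\ell}\,b_{J,\ell}b_{J,\ell}^{\top}$ is a sum of positive semidefinite matrices whose ranges all lie in $\Im(A)$, so $\Im(M_J) = \Span\{b_{J,\ell}:\ell\in[J]\}\subseteq\Im(A)$; and if $\lambda_d(M_J) > 0$ then $\rank M_J\ge d = \dim\Im(A)$, which with $\Im(M_J)\subseteq\Im(A)$ forces equality. The one delicate point I anticipate is the justification of the two display identities — propagating $\bbE[QX\mid PX]=0$ through the conditional-then-total expectation and checking the attendant integrability; everything after that is bookkeeping with block-diagonal matrices and their pseudoinverses.
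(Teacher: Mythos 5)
Your proof is correct and follows essentially the same route as the paper's: both hinge on deducing $\bbE[QX\mid PX]=0$ from \ref{ass_MIM:independence}--\ref{ass_MIM:linear_conditional_mean}, showing that the response-conditional cross-covariance with $QX$ vanishes and that $\Sigma_{J,\ell}$ is block-diagonal with respect to $\Im(P)\oplus\Im(P)^{\perp}$, and then using that the pseudoinverse preserves this block structure. The only (cosmetic) difference is that you test directly against functions $h(Y)$ and divide by $\rho_{J,\ell}$, whereas the paper first proves the pointwise-in-$Y$ statements of Lemma \ref{lem:E-CovX|Y} and then passes to the event $\{Y\in\CR_{J,\ell}\}$ via the law of total covariance.
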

\noindent
We need the following result for the proof of Proposition \ref{prop:MIM_pop_level_prop}.
\begin{lemma}
\label{lem:E-CovX|Y}
Under \ref{ass_MIM:independence} and \ref{ass_MIM:linear_conditional_mean}, we get
\begin{enumerate}[label=\textnormal{(\alph*)}]
\item \label{it:EX|Y} $ \bbE [X | Y ]  = \bbE[\PVX|Y]$ almost surely, or equivalently $ \bbE[\QVX|Y] = 0 $;
\item \label{it:CovX|Y} $ \Covv{ X \mid Y} = \Covv{ \PVX \mid Y} + \Covv{ \QVX \mid Y} $ almost surely.
\end{enumerate}
\end{lemma}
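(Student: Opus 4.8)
The plan is to decompose $X = PX + QX$ where $Q = \Id - P$, and exploit conditional independence \ref{ass_MIM:independence} together with the linear conditional mean assumption \ref{ass_MIM:linear_conditional_mean} to show the $QX$ components interact trivially with $Y$. First I would prove part \ref{it:EX|Y}. By the tower property, $\bbE[QX \mid Y] = \bbE\big[\bbE[QX \mid PX, Y] \,\big|\, Y\big]$. Assumption \ref{ass_MIM:independence} gives $Y \independent X \mid PX$, hence $\bbE[QX \mid PX, Y] = \bbE[QX \mid PX]$. Now \ref{ass_MIM:linear_conditional_mean} says $\bbE[X \mid PX] = PX$, so $\bbE[QX \mid PX] = Q\,\bbE[X \mid PX] = QPX = 0$, using $QP = (\Id - P)P = 0$ since $P$ is an orthoprojector. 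Therefore $\bbE[QX \mid Y] = \bbE[0 \mid Y] = 0$, which immediately yields $\bbE[X \mid Y] = \bbE[PX \mid Y] + \bbE[QX \mid Y] = \bbE[PX \mid Y]$.

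For part \ref{it:CovX|Y}, I would expand the conditional covariance using $X = PX + QX$:
\begin{align*}
\Covv{X \mid Y} = \Covv{PX \mid Y} + \Covv{QX \mid Y} + \Covv{PX, QX \mid Y} + \Covv{QX, PX \mid Y}.
\end{align*}
It then suffices to show the cross term $\Covv{PX, QX \mid Y}$ vanishes almost surely; the other cross term is its transpose. Writing $\Covv{PX, QX \mid Y} = \bbE[PX\,(QX)^\top \mid Y] - \bbE[PX \mid Y]\,\bbE[QX \mid Y]^\top$ and using $\bbE[QX \mid Y] = 0$ from part \ref{it:EX|Y}, the second summand drops, so I need $\bbE[PX\,(QX)^\top \mid Y] = 0$. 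Conditioning further on $PX$ and using the tower property, $\bbE[PX\,(QX)^\top \mid Y] = \bbE\big[PX\,\bbE[(QX)^\top \mid PX, Y] \,\big|\, Y\big]$; by \ref{ass_MIM:independence} the inner expectation equals $\bbE[(QX)^\top \mid PX] = (QPX)^\top = 0$ by the same computation as above. Hence the cross terms vanish and the claimed decomposition follows.

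The argument is essentially a clean application of conditional independence plus the tower property, so there is no serious obstacle; the only point requiring a little care is the order of conditioning — one must condition on $(PX, Y)$ first to invoke \ref{ass_MIM:independence}, then on $Y$ alone — and the elementary but crucial algebraic fact $QP = 0$ for the orthoprojector $P$. I would also note explicitly that all identities hold only almost surely, as the conditional expectations are only defined up to null sets.
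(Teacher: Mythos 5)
Your proof is correct and follows essentially the same route as the paper: both parts hinge on conditioning on $(PX,Y)$ first, invoking \ref{ass_MIM:independence} to drop $Y$, and then applying \ref{ass_MIM:linear_conditional_mean} to get $\bbE[QX \mid PX] = 0$. The only cosmetic difference is in part \ref{it:CovX|Y}, where the paper packages the vanishing of the cross term into the law of total covariance while you compute $\bbE[PX\,(QX)^\top \mid Y]$ directly via the tower property; the underlying argument is identical.
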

\begin{proof}
\ref{it:EX|Y}.
\sloppy The towering property of conditional expectations yields $ \bbE [ X | Y ] =  \bbE [ \bbE [ X | \PVX , Y ] | Y ] $.
Assumption \ref{ass_MIM:independence} implies $Y \independent X | \PVX$,
and thus $\bbE [ X | \PVX , Y ] = \bbE [ X | \PVX ] = \PVX$
by assumption \ref{ass_MIM:linear_conditional_mean}.

\noindent
\ref{it:CovX|Y}.
By the law of total covariance,
\begin{align*}
\Covv{\PVX, \QVX | Y} & = \bbE[\Covv{\PVX, \QVX | \PVX, Y} | Y] \\
& +\Covv{\bbE[\QVX | \PVX, Y], \bbE[\PVX | \PVX, Y] | Y}
= 0 + 0,
\end{align*}
where we used $\bbE [ X | \PVX , Y ] = \PVX$ as shown in the proof of \ref{it:EX|Y}.
Therefore,
\begin{equation*}
\Covv{X \mid Y} = \Covv{ \PVX + \QVX \mid Y} =
\Covv{\PVX \mid Y} +  \Covv{\QVX \mid Y} . \qedhere
\end{equation*}
\end{proof}
\begin{proof}[Proof of Proposition \ref{prop:MIM_pop_level_prop}]
We only show that $b_{J,\ell} \in \Im(A)$ for all $R_{J,\ell}$, since $\Im(M_J)\subseteq \Im(A)$ follows
immediately.
We have
\begin{align*}
\Covv{\QVX,Y|Y \in \CR_{J,\ell}} &= \bbE\left[\Covv{\QVX,Y|Y} |Y \in \CR_{J,\ell}\right] \\
&+\Covv{\bbE[\QVX |Y ], \bbE[Y | Y ] | Y \in \CR_{J,\ell}}\\
&=0 + \Covv{\bbE[\QVX | Y], Y | Y \in \CR_{J,\ell}} = 0,
\end{align*}
where the last equality follows from $\bbE[\QVX|Y] = 0$ by Lemma \ref{lem:E-CovX|Y}. Therefore
\begin{align*}
\Covv{X,Y|Y \in \CR_{J,\ell}} &= \Covv{\PVX,Y|Y \in \CR_{J,\ell}} + \Covv{\QVX,Y|Y \in \CR_{J,\ell}}\\
&= \Covv{\PVX,Y|Y \in \CR_{J,\ell}} + 0 .
\end{align*}
Furthermore, statement (b) of Lemma \ref{lem:E-CovX|Y} implies
$$ \Sigma_{J,\ell} = \Covv{\PVX | Y \in \CR_{J,\ell}} + \Covv{\QVX | Y \in \CR_{J,\ell}} , $$
\sloppy hence the eigenspace of $\Sigma_{J,\ell}$ decomposes orthogonally into eigenspaces of
$\Covv{\PVX | Y \in \CR_{J,\ell}}$ and of $\Covv{\QVX | Y \in \CR_{J,\ell}}$.
The same holds for $\Sigma_{J,\ell}^{\dagger}$ because the eigenvectors are precisely the same
as for $\Sigma_{J,\ell}$. This implies $\Sigma_{J,\ell}^{\dagger}z \in \Im(\MIMPTrue)$ for all $z \in \Im(\MIMPTrue)$,
and the result follows by
\begin{equation*}
b_{J,\ell} = \Sigma_{J,\ell}^{\dagger}\Covv{X,Y|Y \in \CR_{J,\ell}} = \Sigma_{J,\ell}^{\dagger}\Covv{\PVX,Y|Y \in \CR_{J,\ell}} \in A. \qedhere
\end{equation*}
\end{proof}
\paragraph*{Exhaustiveness}
Proposition \ref{prop:MIM_pop_level_prop} ensures exhaustiveness of RCLS (on the population level)
whenever $d$ out of the $J$ least squares vectors $b_{J,\ell}$ are linearly independent.
Even when this is not the case,
we believe that RCLS generically finds a subspace of the index space that accounts for most of the variability in $f$,
thereby allowing for a \emph{sufficient} dimension reduction.
The rationale behind this is that the $b_{J,\ell}$'s can be interpreted as averaged gradients over approximate level sets,
and thus they provide samples of the first order behavior of $f$
along the chosen partition.
This claim is supported numerically in Section \ref{subsec:real_data_experiments},
where RCLS performs better or as good as all inverse regression based methods listed in Table \ref{tab:inverse_regression}.

Analyzing the exhaustiveness of inverse regression estimators is challenging
since in general it is easy to construct examples
where some directions of the index space only show up in the tails of $(X,Y)$.
This also justifies why most typical exhaustiveness conditions
such as RCP and RCP$^2$ are formulated on the nonquantized level,
and therefore do not quite imply exhaustiveness of the actual quantized estimator.
The only exception we are aware of is \cite[Theorem 3.1]{li2005contour},
where sufficient conditions for the exhaustiveness of the estimator are provided by
decoupling the roles of regression function and noise.

Lastly, we mention that it is possible to further enrich the space $\Im(M_J)$
by adding outer products of vectors
$B b_{J,\ell}$ for matrices $B$ which map $\Im(P)$
to $\Im(P)$. This resembles the idea behind the iHt method \cite{cook2004determining},
where $B$ is chosen as a positive power of the average residual- or response-based
Hessian matrix \cite{cook2004determining,li1992principal}. Other choices are powers
of $\Sigma_{J,\ell}$ or $\Sigma_{J,\ell}^{\dagger}$, which map $\Im(P)$ to $\Im(P)$
under \ref{ass_MIM:independence} and \ref{ass_MIM:linear_conditional_mean}.

\subsection{Finite sample guarantees}
\label{sec:mim_index_space_finite_sample}
We now analyze the finite sample performance of $\MIMPJ{J}(\tilde d)$
as an estimator for the orthoprojector $P_J$ satisfying $\Im(P_J) = \Span\{b_{J,\ell} : \ell \in [J]\}$.
Our main result is a $N^{-1/2}$ convergence rate, which is typically also achieved
for inverse regression based methods. Additionally however, we carefully track
the influence of the induced level set partition on the estimator's
performance in order to understand the influence of the hyperparameter $J$. To achieve this,
we rely on an anisotropic bound for the concentration of individual
ordinary least squares vector $\hat b_{J,\ell}$ around $b_{J,\ell}$. The resulting error
bound links the accuracy of RCLS to the geometry of $X|Y \in \CR_{J,\ell}$ encoded
in spectral properties of conditional covariance matrices $\Sigma_{J,\ell} = \Covv{X|Y \in \CR_{J,\ell}}$.

Before we begin the analysis, we introduce some notation. Let $X|_{J,\ell}$ and $Y|_{J,\ell}$
denote random variables $X$ and $Y$ conditioned on $Y \in \CR_{J,\ell}$. By \ref{ass_MIM:subGauss} and Lemma \ref{lem:conditional_subgaussianity},
$X|_{J,\ell}$ and $Y|_{J,\ell}$ are sub-Gaussian whenever $\rho_{J,\ell} > 0$, which implies
that $\Vert X|_{J,\ell}\Vert_{\psi_2}$ and $\Vert Y|_{J,\ell}\Vert_{\psi_2}$ are finite.
Moreover, we define $P_{J,\ell}$ as the orthoprojector onto $\Span\{b_{J,\ell}\}$ and $Q_{J,\ell}:=\Id - P_{J,\ell}$.

\subsubsection{Anisotropic concentration}
\label{subsubsec:anisotropic_concentration_linreg}
An anisotropic concentration bound for $\hat b_{J,\ell} - b_{J,\ell}$
uses the orthogonal decomposition
\begin{equation}
\label{eq:mim_linreg_error_ortho_decomp}
\hat b_{J,\ell} - b_{J,\ell} = P_{J,\ell}(\hat b_{J,\ell} - b_{J,\ell}) + Q_{J,\ell}(\hat b_{J,\ell} - b_{J,\ell}) =
P_{J,\ell}(\hat b_{J,\ell} - b_{J,\ell}) + Q_{J,\ell}\hat b_{J,\ell},
\end{equation}
and finds separate bounds for the terms $P_{J,\ell}(\hat b_{J,\ell} - b_{J,\ell})$ and $Q_{J,\ell}\hat b_{J,\ell}$.
To see why those terms play different roles when estimating $\Im(\MIMPTrue)$,
let us consider the illustrative case of the single-index model, where $P=aa^\top$
for some $a \in \bbS^{D-1}$. We can estimate $a$ by the direction of any
$b_{J,\ell}$, because any nonzero $b_{J,\ell}$ is aligned with $a$
under \ref{ass_MIM:independence} and \ref{ass_MIM:linear_conditional_mean}.
Using few algebraic manipulations we have, with $Q := \Id - P$,
\begin{equation}
\label{eq:sim_example_direction}
\N{\frac{\hat b_{J,\ell}}{\N{\hat b_{J,\ell}}} - a} = \N{\frac{\hat b_{J,\ell}}{\N{\hat b_{J,\ell}}} - \frac{b_{J,\ell}}{\N{b_{J,\ell}}}} \leq \frac{\N{Q\hat b_{J,\ell}}}{\N{b_{J,\ell}}-\N{P(\hat b_{J,\ell} - b_{J,\ell})}},
\end{equation}
whenever $\hat b_{J,\ell}^\top b_{J,\ell} > 0$.
This reveals that the error is dominated by $\Vert Q\hat b_{J,\ell}\Vert$, whereas $\Vert P(\hat b_{J,\ell} - b_{J,\ell})\Vert$
is a higher order error term as soon as  $\SN{\CX_{J,\ell}}$ is sufficiently large.
A similar observation will be established for higher dimensional index spaces in \eqref{eq:bound_after_davis_kahan}
below.

Anisotropic concentration bounds for ordinary least squares vectors have been recently
provided in \cite{acapkarate}.
To restate the bounds, we introduce a directional sub-Gaussian condition number
$\kappa_{J,\ell} := \kappa(P_{J,\ell},X|_{J,\ell}) \vee \kappa(Q_{J,\ell}, X|_{J,\ell})$, where
(recall $\centerRV{Z} = Z - \bbE Z$)
\begin{align}
\label{eq:mim_def_kappa}
\kappa(L, X) &:= \N{L\Covv{X}^{\dagger} \centerRV{X}}_{\psi_2}^2 \N{L\centerRV{X}}_{\psi_2}^2.
\end{align}
As described in \cite{acapkarate}, $\kappa(L, X)$ is related to the restricted matrix condition number defined by
$\tilde\kappa(L,\Covv{X}) := \Vert L\Covv{X}^{\dagger}L\Vert\N{L\Covv{X} L}$,
which measures the heterogeneity of eigenvalues of $\Covv{X}$, when restricting the eigenspaces to $\Im(L)$.
In fact, if $X$ follows a normal distribution, the sub-Gaussian norm is a tight variance
proxy and $\kappa(L,X)$ differs from $\tilde \kappa(L,\Covv{X})$ by a constant factor that only
depends on the precise definition of the sub-Gaussian norm.

We further introduce the standardized random variable $\centerRV{Z}|_{J,\ell} := \Sigma_{J,\ell}^{-1/2}\centerRV{X}|_{J,\ell}$,
where $\Sigma_{J,\ell}^{-1/2}$ is the matrix square root of
$\Sigma_{J,\ell}^{\dagger}$. As a consequence of the standardization, we have
$\textrm{Cov}(\centerRV{Z}|_{J,\ell}) = \Id_D$.

\begin{lemma}[Anisotropic ordinary least squares bounds]
\label{lem:linear_regression_bounds}
Let $J \in \bbN$, $\ell \in [J]$ and assume \ref{ass_MIM:subGauss}.
For fixed $u > 0$, $\varepsilon > 0$, with $\SN{\CX_{J,\ell}} > C(D+u)(\Vert\centerRV{Z}|_{J,\ell}\Vert_{\psi_2}^4 \vee \varepsilon^{-2})$, we get,
with probability at least $1-\exp(-u)$,
\begin{align}
\label{eq:lin_reg_P}
&\N{P_{J,\ell}(b_{J,\ell} - \hat b_{J,\ell})} \leq \varepsilon \sqrt{\kappa_{J,\ell}}
\N{\centerRV{Y}|_{J,\ell}}_{\psi_2} \N{P_{J,\ell} \Sigma_{J,\ell}^{\dagger}\centerRV{X}|_{J,\ell}}_{\psi_2},\\
\label{eq:lin_reg_Q}
&\N{Q_{J,\ell}\hat b_{J,\ell}} \leq \varepsilon \sqrt{\kappa_{J,\ell}} \N{\centerRV{Y}|_{J,\ell}}_{\psi_2} \N{Q_{J,\ell} \Sigma_{J,\ell}^{\dagger}\centerRV{X}|_{J,\ell}}_{\psi_2} ,
\end{align}
Furthermore, we have
\begin{equation}
\label{eq:b_J_ell_bound}
\N{b_{J,\ell}} \leq 2\N{\centerRV{Y}|_{J,\ell}}_{\psi_2} \N{P_{J,\ell} \Sigma_{J,\ell}^{\dagger}\centerRV{X}|_{J,\ell}}_{\psi_2}.
\end{equation}
\end{lemma}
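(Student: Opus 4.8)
The plan is to recognize that each $\hat b_{J,\ell}$ is exactly an ordinary least squares slope vector computed from the i.i.d.\ sample $\{(X_i,Y_i) : Y_i \in \CR_{J,\ell}\}$, whose population counterpart is $b_{J,\ell} = \Sigma_{J,\ell}^{\dagger}\Covv{X,Y \mid Y \in \CR_{J,\ell}}$. Thus the bounds \eqref{eq:lin_reg_P} and \eqref{eq:lin_reg_Q} should follow by a direct application of the anisotropic least squares concentration results of \cite{acapkarate} to the conditioned random vector $(X,Y)|_{J,\ell}$, whose sub-Gaussianity is guaranteed by \ref{ass_MIM:subGauss} together with Lemma \ref{lem:conditional_subgaussianity}. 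The subscript $J,\ell$ just labels which conditioned distribution we feed into that black-box theorem; the sample size there is $n = \SN{\CX_{J,\ell}}$, and the directional condition number $\kappa_{J,\ell}$ is precisely $\kappa(P_{J,\ell},X|_{J,\ell}) \vee \kappa(Q_{J,\ell},X|_{J,\ell})$ as defined in \eqref{eq:mim_def_kappa}. First I would state the relevant theorem of \cite{acapkarate} in the notation of this paper: for a sub-Gaussian pair $(X,Y)$ with $\Covv{X}$ possibly rank-deficient, writing $b = \Covv{X}^{\dagger}\Covv{X,Y}$ and its empirical version $\hat b$ from $n$ samples, one has for any orthoprojector $L$ commuting with $\Covv{X}$ (both $P_{J,\ell}$ and $Q_{J,\ell}$ commute with $\Sigma_{J,\ell}$ by the argument in the proof of Proposition \ref{prop:MIM_pop_level_prop}, since $P_{J,\ell}$ projects onto a one-dimensional eigenspace-aligned subspace) a bound of the form $\N{L(\hat b - b)} \lesssim \varepsilon \sqrt{\kappa(L,X)}\,\N{\centerRV{Y}}_{\psi_2}\N{L\Covv{X}^{\dagger}\centerRV{X}}_{\psi_2}$ valid with probability $1-e^{-u}$ once $n \gtrsim (D+u)(\N{\Covv{X}^{-1/2}\centerRV{X}}_{\psi_2}^4 \vee \varepsilon^{-2})$.

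The one point that needs a small amount of care is the splitting $Q_{J,\ell}(\hat b_{J,\ell} - b_{J,\ell}) = Q_{J,\ell}\hat b_{J,\ell}$, which holds because $b_{J,\ell} \in \Im(A) = \Im(P_{J,\ell})$ by Proposition \ref{prop:MIM_pop_level_prop} (in the single-direction case) or, more precisely, because $Q_{J,\ell}b_{J,\ell} = 0$ by definition of $P_{J,\ell}$ as the projector onto $\Span\{b_{J,\ell}\}$. This is exactly the identity already recorded in \eqref{eq:mim_linreg_error_ortho_decomp}, so \eqref{eq:lin_reg_Q} is just the $L = Q_{J,\ell}$ instance of the cited bound with the left-hand side rewritten. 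For \eqref{eq:lin_reg_P} we take $L = P_{J,\ell}$ and note $P_{J,\ell}(\hat b_{J,\ell} - b_{J,\ell})$ is already in the right form; here we also need that $P_{J,\ell}$ commutes with $\Sigma_{J,\ell}$, which again follows from $b_{J,\ell}$ lying in the range of $\Sigma_{J,\ell}$ and from the orthogonal decomposition of the eigenspaces of $\Sigma_{J,\ell}$ established in the proof of Proposition \ref{prop:MIM_pop_level_prop}.

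Finally, for \eqref{eq:b_J_ell_bound} I would bound $\N{b_{J,\ell}} = \N{P_{J,\ell}b_{J,\ell}}$ directly at the population level, without invoking any concentration: writing $b_{J,\ell} = \Sigma_{J,\ell}^{\dagger}\Covv{X,Y \mid Y\in\CR_{J,\ell}}$ and projecting, a Cauchy–Schwarz / sub-Gaussian moment estimate gives $\N{P_{J,\ell}\Sigma_{J,\ell}^{\dagger}\Covv{X,Y\mid Y\in\CR_{J,\ell}}} \leq \N{P_{J,\ell}\Sigma_{J,\ell}^{\dagger}\centerRV{X}|_{J,\ell}}_{\psi_2}\,\N{\centerRV{Y}|_{J,\ell}}_{\psi_2}$ up to the absolute constant that relates the $\psi_2$ norm of a product to the product of $\psi_2$ norms (sub-Gaussian times sub-Gaussian is sub-exponential, and the cross-covariance is controlled by the $\psi_1$ norm, which is at most the product of the two $\psi_2$ norms up to a universal factor, absorbed into the $2$). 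The main obstacle, such as it is, is purely bookkeeping: verifying that the hypotheses of the \cite{acapkarate} theorem are met for the conditioned pair — in particular that the sample-size threshold there, phrased in terms of $\N{\centerRV{Z}|_{J,\ell}}_{\psi_2}^4$, matches the threshold $\SN{\CX_{J,\ell}} > C(D+u)(\N{\centerRV{Z}|_{J,\ell}}_{\psi_2}^4 \vee \varepsilon^{-2})$ stated here, and that the orthoprojectors $P_{J,\ell},Q_{J,\ell}$ satisfy whatever commutation/compatibility condition the cited result imposes. No genuinely new probabilistic content is needed beyond what Lemma \ref{lem:conditional_subgaussianity} and \cite{acapkarate} already provide.
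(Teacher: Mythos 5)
Your proposal is correct and follows essentially the same route as the paper: the concentration bounds \eqref{eq:lin_reg_P}--\eqref{eq:lin_reg_Q} are a direct application of the anisotropic least squares bounds of \cite{acapkarate} to the conditioned pair $(X,Y)|_{J,\ell}$ (with sample size $\SN{\CX_{J,\ell}}$), and \eqref{eq:b_J_ell_bound} follows from $b_{J,\ell} = P_{J,\ell}\Sigma_{J,\ell}^{\dagger}\Covv{X|_{J,\ell},Y|_{J,\ell}}$ together with the sub-Gaussian cross-covariance estimate, which is exactly \cite[Lemma 6.5]{acapkarate}. The only caution is that your appeal to Proposition \ref{prop:MIM_pop_level_prop} to justify commutation of $P_{J,\ell}$ with $\Sigma_{J,\ell}$ would import assumptions \ref{ass_MIM:independence}--\ref{ass_MIM:linear_conditional_mean}, which this lemma does not make; fortunately the cited result holds for arbitrary orthoprojectors, so no such commutation is needed.
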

\begin{proof}
The concentration bounds are precisely \cite[Lemma 4.1]{acapkarate} adjusted to the notation used here.
\eqref{eq:b_J_ell_bound} follows from $b_{J,\ell} = P_{J,\ell} \Sigma_{J,\ell}^{\dagger}\Covv{X|_{J,\ell}, Y|_{J,\ell}}$ and \cite[Lemma 6.5]{acapkarate}.
\end{proof}
\noindent

\begin{figure}[h]
        \centering
        \includegraphics[trim={0 1.5cm 0 1.cm}, clip, width=0.8\textwidth]{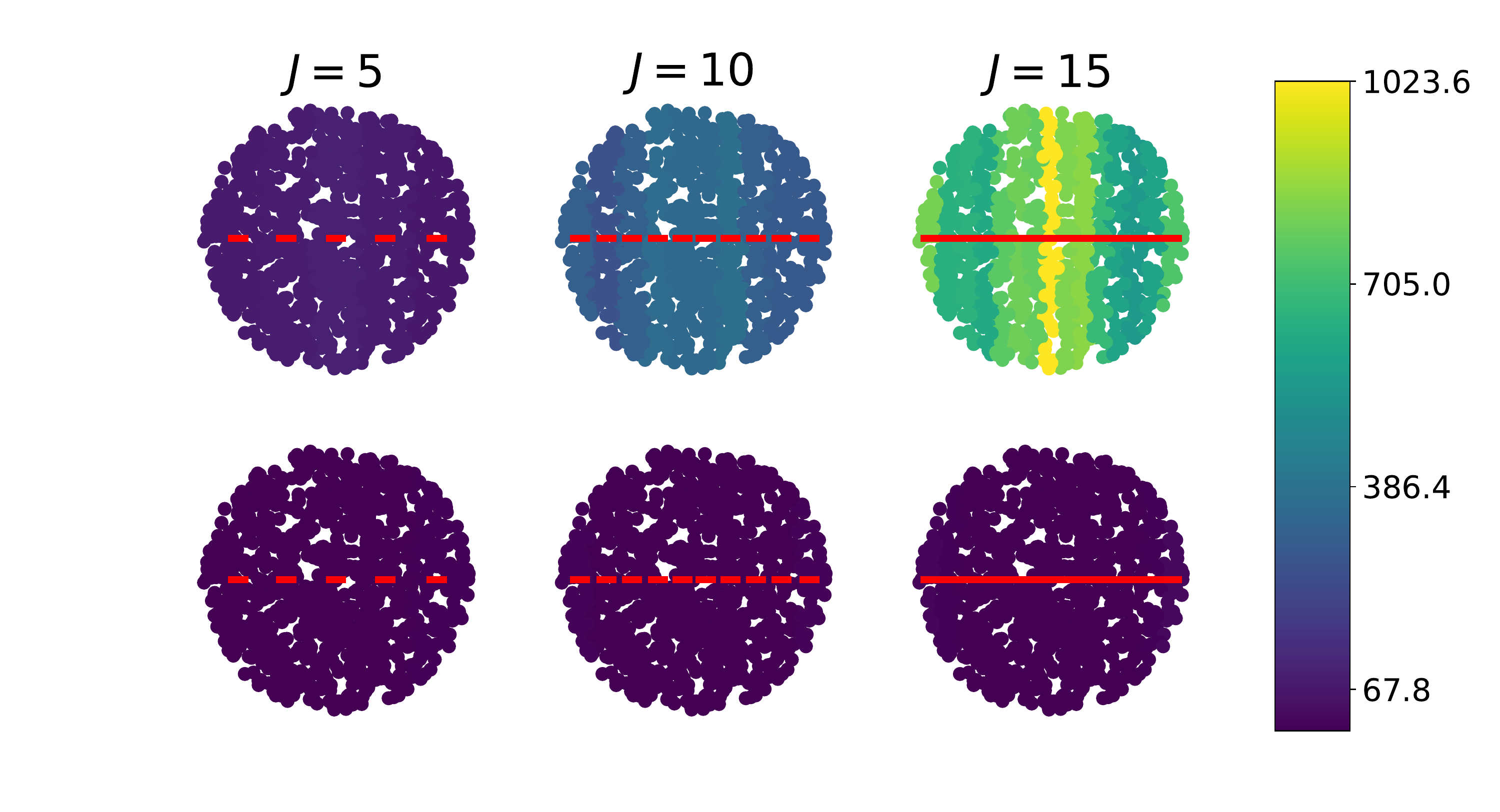}
        \caption{We sample $X \sim \textrm{Uni}(\{X : \N{X} \leq 1\})$ and $Y = 1/(1+\exp(-X_1))$, and show
        $\Vert P \Sigma_{J,\ell}^{\dagger} P\Vert$ (top) and $\Vert Q \Sigma_{J,\ell}^{\dagger}Q\Vert$ (bottom) row for $P =e_1e_1^\top$, $Q = \Id - P$,
        and all $\ell \in [J]$ for $J \in \{5,10,15\}$. The red lines mark local ordinary least squares vectors.
        We see that $\Vert P \Sigma_{J,\ell}^{\dagger} P\Vert$ increases substantially when increasing the number
        of level sets $J$, while $\Vert Q \Sigma_{J,\ell}^{\dagger}Q\Vert$ remains roughly constant.}
        \label{fig:levelset_details}
\end{figure}

\noindent
Equations \eqref{eq:lin_reg_P} and \eqref{eq:lin_reg_Q} in Lemma \ref{lem:linear_regression_bounds} reveal that
the concentration of $P_{J,\ell}(b_{J,\ell} - \hat b_{J,\ell})$ and $Q_{J,\ell}\hat b_{J,\ell}$  scale with sub-Gaussian norms of
$P_{J,\ell}\Sigma^{\dagger}_{J,\ell}\centerRV{X}$, and $Q_{J,\ell}\Sigma^{\dagger}_{J,\ell}\centerRV{X}$
(we can intuitively think of $\Vert P_{J,\ell}\Sigma_{J,\ell}^{\dagger}P_{J,\ell}\Vert$,
and $\Vert Q_{J,\ell}\Sigma_{J,\ell}^{\dagger}Q_{J,\ell}\Vert$). In many scenarios, both norms, if
viewed as functions of the parameter $J$, behave very differently.
This is because increasing the number of level sets $J$ typically reduces
the variance in the direction of the least squares solution $b_{J,\ell}$,
and therefore increases $\Vert P_{J,\ell}\Sigma_{J,\ell}^{\dagger}P_{J,\ell}\Vert$, while
$\Vert Q_{J,\ell}\Sigma_{J,\ell}^{\dagger}Q_{J,\ell}\Vert$ is often not affected.
The effect is particularly strong for single-index models with
monotone link functions, as illustrated in Figure \ref{fig:levelset_details},
but it can also be observed in more general scenarios, for instance if $f$ follows
a monotone single-index model locally on one of the level sets. Recalling
\eqref{eq:sim_example_direction}, using anisotropic concentration is therefore necessary, if we aim at an
accurate description of the projection error in terms of both, $N$ and $J$.

To simplify notation in the following, we introduce the shorthands
\[
\eta_{J,\ell}^{\parallel} := \N{\centerRV{Y}|_{J,\ell}}_{\psi_2} \N{P_{J,\ell} \Sigma_{J,\ell}^{\dagger}\centerRV{X}|_{J,\ell}}_{\psi_2},\quad
\eta_{J,\ell}^{\perp} := \N{\centerRV{Y}|_{J,\ell}}_{\psi_2} \N{Q_{J,\ell} \Sigma_{J,\ell}^{\dagger}\centerRV{X}|_{J,\ell}}_{\psi_2},
\]
and $\eta_{J,\ell} := \eta_{J,\ell}^{\parallel} + \eta_{J,\ell}^{\perp}$.

\subsubsection{Concentration bounds for index space estimation}
\label{subsubsec:concentration_bounds_index_space_estimation}
Our goal is now to provide concentration bounds for $\MIMPJ{J}(\tilde d)$
around $\MIMPTrue_J$.
Using the Davis-Kahan Theorem \cite[Theorem 7.3.1]{bhatia2013matrix}
we have for $Q_J := \Id - \MIMPTrue_J$
\begin{align}
\label{eq:bound_after_davis_kahan}
\N{\MIMPJ{J}(\tilde d)Q_J\!}_F
\!\leq\! \frac{\N{\MIMPJ{J}(\tilde d)\left(\hat M_J - M_J\right)Q_J}_F}{\lambda_{\tilde d}(\hat M_J)}
\!\leq\! \frac{\N{\left(\hat M_J - M_J\right)Q_J}_F}{\lambda_{\tilde d}( M_J )\!-\!\N{\hat M_J\!-\!M_J}},
\end{align}
where we used Weyl's bound \cite{weyl1912asymptotische} to get
$\lambda_{\tilde d}(\hat M_J)  \geq \lambda_{\tilde d}( M_J ) - \textN{\hat M_J - M_J}$ in the second inequality. It remains
to develop concentration bounds for $\Vert (\hat M_J - M_J)Q_J\Vert_F$, which dictates the projection error,
and $\Vert \hat M_J - M_J \Vert$ to ensure that the denominator does not vanish.

\begin{theorem}
\label{thm:M_J_parallel_bound}
Let \ref{ass_MIM:independence} - \ref{ass_MIM:subGauss} hold. Fix $u > 0$, $\varepsilon > 0$, and define
$$
\maxStandardNorm := \max_{\ell \in [J]}\Vert \centerRV{Z}|_{J,\ell}\Vert_{\psi_2}^4, \quad \minRho := \min_{\ell \in [J]}\rho_{J,\ell} .
$$
Whenever $N > C(D + u + \log(J))(\maxStandardNorm\ \minRho^{-1} \vee \varepsilon^{-2})$ we have
\begin{align}
\label{eq:M_J_parallel_bound}
&\bbP\left(\N{\hat M_J - M_J}_F \leq \varepsilon \sum_{\ell=1}^{J}\sqrt{\kappa_{J,\ell}}\eta_{J,\ell}^2\right) \geq 1 - \exp(-u).
\end{align}
\end{theorem}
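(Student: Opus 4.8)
The plan is to reduce the bound on $\N{\hat M_J - M_J}_F$ to separate estimates on each of the $J$ level sets, feeding the per–level-set control of $\hat b_{J,\ell}$ from Lemma~\ref{lem:linear_regression_bounds} and a Bernstein control of the empirical frequencies $\hat\rho_{J,\ell}$ into a telescoping expansion. First I would write
\begin{align*}
\hat M_J - M_J = \sum_{\ell=1}^{J}\Big[(\hat\rho_{J,\ell}-\rho_{J,\ell})\,b_{J,\ell}b_{J,\ell}^\top
+ \hat\rho_{J,\ell}\big((\hat b_{J,\ell}-b_{J,\ell})\,\hat b_{J,\ell}^\top + b_{J,\ell}\,(\hat b_{J,\ell}-b_{J,\ell})^\top\big)\Big]
\end{align*}
and apply the triangle inequality in $\N{\cdot}_F$, so that it suffices to bound, for each $\ell$, a \emph{frequency term} $\SN{\hat\rho_{J,\ell}-\rho_{J,\ell}}\,\N{b_{J,\ell}}^2$ and a \emph{regression term} $\hat\rho_{J,\ell}\,\N{\hat b_{J,\ell}-b_{J,\ell}}\,(\N{\hat b_{J,\ell}}+\N{b_{J,\ell}})$.

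Next I would build the good event. Conditionally on the cardinalities $(\SN{\CX_{J,1}},\dots,\SN{\CX_{J,J}})$, each $\SN{\CX_{J,\ell}}$ is $\mathrm{Binomial}(N,\rho_{J,\ell})$ and the points of $\CX_{J,\ell}$ are i.i.d.\ copies of $X|_{J,\ell}$, so a Bernstein inequality and a union bound over $\ell\in[J]$ (with a logarithmic inflation of $u$) produce an event $E_1$ on which, for all $\ell$, both $\SN{\CX_{J,\ell}}\ge \tfrac12 N\rho_{J,\ell}$ and $\SN{\hat\rho_{J,\ell}-\rho_{J,\ell}}\lesssim \sqrt{\rho_{J,\ell}(u+\log J)/N}+(u+\log J)/N$. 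Using the hypothesis $N>C(D+u+\log J)(\maxStandardNorm\,\minRho^{-1}\vee\varepsilon^{-2})$ and the elementary fact $\N{\centerRV Z|_{J,\ell}}_{\psi_2}\gtrsim 1$ (since $\Covv{\centerRV Z|_{J,\ell}}=\Id_D$), on $E_1$ one gets $\SN{\CX_{J,\ell}}\ge\tfrac12 N\minRho\gtrsim (D+u')\maxStandardNorm\ge (D+u')\N{\centerRV Z|_{J,\ell}}_{\psi_2}^4$ with $u'\asymp u+\log J$, so the per–level-set sample-size hypothesis of Lemma~\ref{lem:linear_regression_bounds} is satisfied; likewise $\SN{\hat\rho_{J,\ell}-\rho_{J,\ell}}\lesssim\varepsilon$ from the $\varepsilon^{-2}$ branch. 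On $E_1$ I would then invoke Lemma~\ref{lem:linear_regression_bounds} on each level set with the \emph{smallest admissible} accuracy $\varepsilon_\ell\asymp\sqrt{(D+u')/\SN{\CX_{J,\ell}}}$ and failure probability $e^{-u'}$; a union bound gives an event $E_2$ with $\bbP(E_1\cap E_2)\ge 1-e^{-u}$ on which, for every $\ell$, $\N{\hat b_{J,\ell}-b_{J,\ell}}\le \N{P_{J,\ell}(\hat b_{J,\ell}-b_{J,\ell})}+\N{Q_{J,\ell}\hat b_{J,\ell}}\le \varepsilon_\ell\sqrt{\kappa_{J,\ell}}\,\eta_{J,\ell}$, while \eqref{eq:b_J_ell_bound} yields $\N{b_{J,\ell}}\le 2\eta_{J,\ell}^{\parallel}\le 2\eta_{J,\ell}$ and hence $\N{\hat b_{J,\ell}}\le(2+\varepsilon_\ell\sqrt{\kappa_{J,\ell}})\eta_{J,\ell}$. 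The decisive point is that $\hat\rho_{J,\ell}\varepsilon_\ell=\tfrac{\SN{\CX_{J,\ell}}}{N}\varepsilon_\ell\asymp\tfrac1N\sqrt{(D+u')\SN{\CX_{J,\ell}}}\le\sqrt{(D+u')/N}\lesssim\varepsilon$, using $\SN{\CX_{J,\ell}}\le N$ and the hypothesis; this is what keeps the sum over level sets free of a $\minRho^{-1}$ factor.

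Assembling on $E_1\cap E_2$: the frequency term is $\SN{\hat\rho_{J,\ell}-\rho_{J,\ell}}\N{b_{J,\ell}}^2\lesssim\varepsilon\,\eta_{J,\ell}^2\le\varepsilon\sqrt{\kappa_{J,\ell}}\,\eta_{J,\ell}^2$ (using $\kappa_{J,\ell}\ge1$), and the regression term is $\hat\rho_{J,\ell}\N{\hat b_{J,\ell}-b_{J,\ell}}(\N{\hat b_{J,\ell}}+\N{b_{J,\ell}})\lesssim \hat\rho_{J,\ell}\varepsilon_\ell\sqrt{\kappa_{J,\ell}}\,\eta_{J,\ell}^2\lesssim\varepsilon\sqrt{\kappa_{J,\ell}}\,\eta_{J,\ell}^2$, the last step using $\hat\rho_{J,\ell}\le1$ and the above observation (the piece carrying an extra $\varepsilon_\ell\sqrt{\kappa_{J,\ell}}$ being genuinely higher order in the admissible regime). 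Summing over $\ell$ and relabelling $C\varepsilon\to\varepsilon$ as usual gives $\N{\hat M_J-M_J}_F\le\varepsilon\sum_{\ell=1}^{J}\sqrt{\kappa_{J,\ell}}\,\eta_{J,\ell}^2$ with probability at least $1-e^{-u}$.

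I expect the main obstacle to be the constant bookkeeping in the last two steps, not any single hard estimate. The sample budget on level set $\ell$ is only $\approx N\rho_{J,\ell}$, so a uniform choice $\varepsilon_\ell=\varepsilon$ would require $N\gtrsim(D+u+\log J)\minRho^{-1}\varepsilon^{-2}$, more than assumed; the shrinking-$\varepsilon_\ell$ device repairs this, but one must then carefully check that the higher-order contributions involving the extra factor $\varepsilon_\ell\sqrt{\kappa_{J,\ell}}$ (coming from $\N{\hat b_{J,\ell}}$ and the cross terms) are absorbed, keeping track of the interplay of $\maxStandardNorm$, $\minRho$, $\maxKappa$ and $\varepsilon$ throughout. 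A secondary technical point is performing the conditioning on the level-set cardinalities cleanly, so that the per–level-set invocations of Lemma~\ref{lem:linear_regression_bounds} are legitimate and the two union bounds (over the binomial concentration and over the regression bounds) compose to the stated $e^{-u}$.
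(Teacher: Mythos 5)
Your proposal is correct and follows essentially the same route as the paper's proof: the same split into a frequency term $\SN{\hat\rho_{J,\ell}-\rho_{J,\ell}}\N{b_{J,\ell}}^2$ and a regression term, the same binomial/Chernoff control of the $\hat\rho_{J,\ell}$ to populate the level sets, and the same key device of invoking Lemma \ref{lem:linear_regression_bounds} with a level-set-dependent accuracy so that $\hat\rho_{J,\ell}\varepsilon_\ell\lesssim\varepsilon$ (the paper takes $\varepsilon_\ell=\varepsilon/\sqrt{\hat\rho_{J,\ell}}$, which is equivalent up to constants to your $\sqrt{(D+u')/\SN{\CX_{J,\ell}}}$ under the stated sample-size condition). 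The only cosmetic difference is that the paper caps $\N{\hat b_{J,\ell}-b_{J,\ell}}\le\eta_{J,\ell}$ via a ``$\wedge\,1$'' rather than tracking your higher-order $\varepsilon_\ell\sqrt{\kappa_{J,\ell}}$ piece, but this does not change the argument.
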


\begin{proof}
The first step is to decompose the error according to
\begin{align*}
&\N{M_J - \hat M_J}_F =\N{\sum_{\ell=1}^{J} \hat \rho_{J,\ell} \left(\hat b_{J,\ell} \hat b_{J,\ell}^\top - b_{J,\ell} b_{J,\ell}^\top\right)- \left( \rho_{J,\ell} - \hat\rho_{J,\ell} \right)  b_{J,\ell} b_{J,\ell}^\top}_F\\
&\quad\leq\sum_{\ell=1}^{J}\hat \rho_{J,\ell} \N{\hat b_{J,\ell} \hat b_{J,\ell}^\top - b_{J,\ell}  b_{J,\ell}^\top}_F + \sum_{\ell=1}^{J} \SN{\hat \rho_{J,\ell} -  \rho_{J,\ell}}
\N{b_{J,\ell} b_{J,\ell}^\top}_F\\
&\quad\leq\underbrace{\sum_{\ell=1}^{J} \hat \rho_{J,\ell} \left(\N{\hat b_{J,\ell} - b_{J,\ell}} + 2\N{b_{J,\ell}}\right)\N{\hat b_{J,\ell} -  b_{J,\ell}}}_{=: T_1} +
\underbrace{\sum_{\ell=1}^{J} \SN{\hat \rho_{J,\ell} -  \rho_{J,\ell}}\N{b_{J,\ell}}^2}_{=:T_2}.
\end{align*}
The second term can be bounded using Lemma \ref{lem:linear_regression_bounds} and \ref{lem:bound_rho}.
Specifically, Lemma \ref{lem:linear_regression_bounds} implies $\N{b_{J,\ell}} \leq 2\eta_{J,\ell}$,
and \eqref{eq:emp_density_hoeffding} in Lemma \ref{lem:bound_rho} with a union bound argument over $\ell \in [J]$ shows
\begin{align}
\label{eq:conditioning_for_rho_1}
&\bbP\left(\forall \ell \in [J]:\SN{\rho_{J,\ell} - \hat \rho_{J,\ell}} \le \varepsilon \right) \geq 1-\exp(-u),
\end{align}
provided $N >  C(u+\log(J))\varepsilon^{-2}$.
Thus we have $T_2 \leq 4 \varepsilon \sum_{\ell \in [J]}\eta_{J,\ell}^2$ with probability $1-\exp(-u)$ whenever $N >  C(u+\log(J))\varepsilon^{-2}$.

\noindent
To bound $T_1$ we first need to ensure that each level set is sufficiently populated.
Equation \eqref{eq:emp_density_chernoff} in Lemma \ref{lem:bound_rho}, and the union bound over $\ell \in [J]$ give
\begin{align}
\label{eq:conditioning_for_rho_2}
\bbP \left(\forall \ell \in [J]: \SN{\hat \rho_{J,\ell} - \rho_{J,\ell}} \leq \frac{1}{2} \rho_{J,\ell} \right) &\geq 1 - 2\exp(-u)
\end{align}
provided $N > C\frac{u + \log(J)}{\minRho}$.
It follows that $1/2 \leq \hat \rho_{J,\ell}/\rho_{J,\ell} \leq \hat \rho_{J,\ell}/\minRho$,
and whenever $N >  C \left(D + u + \log(J)\right)\left(\frac{\maxStandardNorm}{\minRho} \vee \varepsilon^{-2}\right)$ we get
\begin{equation}
\label{eq:mim_lower_bound_local_samples}
\SN{\CX_{J,\ell}} = \hat \rho_{J,\ell}N >
C \left(D + u + \log(J)\right)\left(\Vert \centerRV{Z}|_{J,\ell}\Vert_{\psi_2}^4 \vee \varepsilon^{-2}\hat \rho_{J,\ell}\right).
\end{equation}
Now we can use Lemma \ref{lem:linear_regression_bounds} to concentrate $\Vert \hat b_{J,\ell} - b_{J,\ell}\Vert$ to obtain
\begin{align}
\label{eq:conditioning_for_rho_3}
\bbP\left(\forall \ell \in [J]:\N{\hat b_{J,\ell} - b_{J,\ell}} \leq \left(\varepsilon \sqrt{\frac{\kappa_{J,\ell}}{\hat \rho_{J,\ell}}}\wedge 1 \right) \eta_{J,\ell} \right) \geq 1- \exp(-u).
\end{align}
Finally, \eqref{eq:conditioning_for_rho_3} and $\N{b_{J,\ell}} \leq 2 \eta_{J,\ell}$ from Lemma \ref{lem:linear_regression_bounds} implies
\begin{align*}
T_1 \leq \sum_{\ell=1}^{J} \hat \rho_{J,\ell} \left(\N{\hat b_{J,\ell} - b_{J,\ell}} + 2\N{b_{J,\ell}}\right)\N{\hat b_{J,\ell} -  b_{J,\ell}}
\leq 3\varepsilon\sum_{\ell=1}^{J} \sqrt{\kappa_{J,\ell}}\eta_{J,\ell}^2
\end{align*}
whenever \eqref{eq:conditioning_for_rho_2} and \eqref{eq:conditioning_for_rho_3} hold, \emph{i.e.}
with probability at least $1-3\exp(-u)$ by the union bound. The result follows now from
bounds on $T_1$ and $T_2$, which hold with probability at least $1-4\exp(-u)$, or $1-\exp(-u)$, when adjusting
$C$ in the statement accordingly.
\end{proof}

\begin{theorem}
\label{thm:M_J_perp_bound}
Let \ref{ass_MIM:independence} - \ref{ass_MIM:subGauss} hold.  Fix $u > 0$, $\varepsilon > 0$, and define
$$
\maxStandardNorm := \max_{\ell \in [J]}\Vert \centerRV{Z}|_{J,\ell}\Vert_{\psi_2}^4, \quad \minRho := \min_{\ell \in [J]}\rho_{J,\ell} .
$$
Whenever $N > C(D + u + \log(J))(\maxStandardNorm\ \minRho^{-1} \vee \varepsilon^{-2})$ we have
\begin{align*}
\bbP\left(\N{\left(\hat M_J - M_J\right)Q_J}_F \leq \varepsilon \sum_{\ell=1}^{J}\sqrt{\rho_{J,\ell}\kappa_{J,\ell}}\eta_{J,\ell}\eta_{J,\ell}^{\perp}\right) \geq 1 - \exp(-u).
\end{align*}
\end{theorem}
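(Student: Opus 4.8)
The plan is to follow the proof of Theorem~\ref{thm:M_J_parallel_bound}, exploiting the cancellations forced by right-multiplication by $Q_J$. Since $\Im(P_J) = \Span\{b_{J,\ell} : \ell \in [J]\}$, we have $Q_J b_{J,\ell} = 0$ for every $\ell$, so the term $\rho_{J,\ell}\,b_{J,\ell}b_{J,\ell}^\top Q_J$ vanishes identically and
\[
  (\hat M_J - M_J)\,Q_J \;=\; \sum_{\ell=1}^{J}\hat\rho_{J,\ell}\,\hat b_{J,\ell}\,(Q_J\hat b_{J,\ell})^\top .
\]
In particular the density-error term (the analogue of $T_2$ in the proof of Theorem~\ref{thm:M_J_parallel_bound}) disappears, which is why no $\eta_{J,\ell}^2$ contribution survives. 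Taking Frobenius norms of these rank-one summands gives
\[
  \N{(\hat M_J - M_J)Q_J}_F \;\le\; \sum_{\ell=1}^{J}\hat\rho_{J,\ell}\,\N{\hat b_{J,\ell}}\,\N{Q_J\hat b_{J,\ell}} .
\]

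The next step is to pass from the global projector $Q_J$ to the local one $Q_{J,\ell}$. Since $\Span\{b_{J,\ell}\}\subseteq\Im(P_J)$ one has $Q_J P_{J,\ell} = 0$, hence $Q_J Q_{J,\ell} = Q_J$; combining this with $Q_{J,\ell}b_{J,\ell} = 0$ and $Q_J b_{J,\ell}=0$ gives $Q_J\hat b_{J,\ell} = Q_J Q_{J,\ell}\hat b_{J,\ell}$, and therefore $\N{Q_J\hat b_{J,\ell}}\le\N{Q_{J,\ell}\hat b_{J,\ell}}$ since $Q_J$ is an orthogonal projector. This is the crucial point: it routes the estimate through the anisotropic least-squares bound \eqref{eq:lin_reg_Q} of Lemma~\ref{lem:linear_regression_bounds}, which produces the perpendicular quantity $\eta_{J,\ell}^\perp$ in place of $\eta_{J,\ell}$.

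What remains is the conditioning argument, copied almost verbatim from the proof of Theorem~\ref{thm:M_J_parallel_bound}. Under $N > C(D+u+\log J)(\maxStandardNorm\,\minRho^{-1}\vee\varepsilon^{-2})$, the events \eqref{eq:conditioning_for_rho_2} and \eqref{eq:mim_lower_bound_local_samples} hold with high probability, so Lemma~\ref{lem:linear_regression_bounds} applies on each level set with effective tolerance $\varepsilon/\sqrt{\hat\rho_{J,\ell}}$; this yields, simultaneously over $\ell\in[J]$ as in \eqref{eq:conditioning_for_rho_3}, the estimate $\N{\hat b_{J,\ell} - b_{J,\ell}} \le (\varepsilon\sqrt{\kappa_{J,\ell}/\hat\rho_{J,\ell}}\wedge 1)\,\eta_{J,\ell}$ and, via \eqref{eq:lin_reg_Q}, $\N{Q_{J,\ell}\hat b_{J,\ell}} \le (\varepsilon\sqrt{\kappa_{J,\ell}/\hat\rho_{J,\ell}}\wedge 1)\,\eta_{J,\ell}^\perp$. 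Together with $\N{b_{J,\ell}}\le 2\eta_{J,\ell}$ from \eqref{eq:b_J_ell_bound}, which forces $\N{\hat b_{J,\ell}}\le 3\eta_{J,\ell}$, substituting into the bound obtained above gives
\[
  \N{(\hat M_J - M_J)Q_J}_F \;\le\; \sum_{\ell=1}^{J}\hat\rho_{J,\ell}\cdot 3\eta_{J,\ell}\cdot\varepsilon\sqrt{\frac{\kappa_{J,\ell}}{\hat\rho_{J,\ell}}}\,\eta_{J,\ell}^\perp \;=\; 3\varepsilon\sum_{\ell=1}^{J}\sqrt{\hat\rho_{J,\ell}\kappa_{J,\ell}}\,\eta_{J,\ell}\eta_{J,\ell}^\perp .
\]

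Here the weight $\hat\rho_{J,\ell}$ meets the $\hat\rho_{J,\ell}^{-1/2}$ produced by the sparsely populated level set, leaving $\sqrt{\hat\rho_{J,\ell}}$; this is exactly why the bound carries $\sqrt{\rho_{J,\ell}}$ rather than the $\rho$-free factor of Theorem~\ref{thm:M_J_parallel_bound}. Bounding $\hat\rho_{J,\ell}\le\tfrac32\rho_{J,\ell}$ on the event \eqref{eq:conditioning_for_rho_2} converts the right-hand side into $3\sqrt{3/2}\,\varepsilon\sum_{\ell=1}^{J}\sqrt{\rho_{J,\ell}\kappa_{J,\ell}}\,\eta_{J,\ell}\eta_{J,\ell}^\perp$, and absorbing the numeric constant into $\varepsilon$ (hence into $C$) together with a union bound over the finitely many conditioning events (adjusting $u$ and $C$) gives the claimed probability $1-\exp(-u)$. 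The calculation is routine given Theorem~\ref{thm:M_J_parallel_bound} and Lemma~\ref{lem:linear_regression_bounds}; the only delicate points are the identity $Q_J Q_{J,\ell} = Q_J$, without which the local anisotropic bound could not be invoked, and keeping track of the level-set-dependent tolerance $\varepsilon/\sqrt{\hat\rho_{J,\ell}}$ so that the exponent of $\rho_{J,\ell}$ comes out to exactly $1/2$.
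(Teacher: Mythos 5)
Your proposal is correct and follows essentially the same route as the paper's proof: the same observation that $M_JQ_J=0$ kills the density-error term, the same reduction $\N{Q_J\hat b_{J,\ell}}\le\N{Q_{J,\ell}\hat b_{J,\ell}}$ (the paper phrases it as $\Im(Q_J)\subseteq\Im(Q_{J,\ell})$, equivalent to your $Q_JQ_{J,\ell}=Q_J$), the same conditioning events, and the same invocation of \eqref{eq:lin_reg_Q} together with $\hat\rho_{J,\ell}\le\tfrac32\rho_{J,\ell}$ to land on $\sqrt{\rho_{J,\ell}\kappa_{J,\ell}}\,\eta_{J,\ell}\eta_{J,\ell}^{\perp}$. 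No gaps; only the numerical constant differs trivially from the paper's $9/2$.
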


\begin{proof}
By the definition of $Q_J$ and $Q_{J,\ell}$, we have $\Im(Q_{J})\subseteq \Im(Q_{J,\ell})$. This first allows us to bound
\begin{align*}
 & \N{(\hat M_J - M_J)Q_J}_F = \N{\sum_{\ell=1}^{J} \hat \rho_{J,\ell} \hat b_{J,\ell} \hat b_{J,\ell}^\top Q_J}_F
\leq \sum_{\ell=1}^{J} \hat \rho_{J,\ell} \N{\hat b_{J,\ell}}\N{Q_J\hat b_{J,\ell} }\\
\leq \ & \sum_{\ell=1}^{J} \hat \rho_{J,\ell} \N{\hat b_{J,\ell}}\N{Q_{J,\ell}\hat b_{J,\ell}}
\leq  \sum_{\ell=1}^{J} \hat \rho_{J,\ell} \left(\N{b_{J,\ell}} + \N{\hat b_{J,\ell} - b_{J,\ell}}\right)\N{Q_{J,\ell}\hat b_{J,\ell}}.
\end{align*}
By the same argument as in the proof of Theorem \ref{thm:M_J_parallel_bound}, we have $\SN{\hat \rho_{J,\ell} - \rho_{J,\ell}} \leq 1/2 \rho_{J,\ell}$
for all $\ell \in [J]$ with probability at least $1-2\exp(-u)$, and thus the number
of samples in each level set satisfies \eqref{eq:mim_lower_bound_local_samples}.
Using this together with \eqref{eq:lin_reg_P} and  \eqref{eq:lin_reg_Q},
and the union bound over $\ell \in [J]$, we get
\begin{align*}
&\bbP\left(\forall \ell \in [J] : \N{\hat b_{J,\ell} - b_{J,\ell}} \leq \eta_{J,\ell}\right) \geq 1 - \exp(-u),\\
&\bbP\left(\forall \ell \in [J] : \N{Q_{J,\ell}\hat b_{J,\ell}} \leq \varepsilon  \sqrt{\frac{\kappa_{J,\ell}}{\hat \rho_\ell}}\eta_{J,\ell}^{\perp}\right) \geq 1 - \exp(-u).
\end{align*}
Plugging this, and $\Vert b_{J,\ell} \Vert \leq 2 \eta_\ell$ by Lemma \ref{lem:linear_regression_bounds},
in the initial decomposition, we get with probability at least $1-4\exp(-u)$
\begin{align*}
\N{(\hat M - M)Q_J}_F&\leq  \sum_{\ell=1}^{J} \hat \rho_{J,\ell} \left(\N{b_{J,\ell}} + \N{\hat b_{J,\ell} - b_{J,\ell}}\right)\N{Q_{J,\ell}\hat b_{J,\ell}}\\
&\leq 3\varepsilon \sum_{\ell=1}^{J} \sqrt{\hat \rho_{J,\ell}\kappa_{J,\ell}} \eta_{J,\ell}^{\perp}\eta_{J,\ell}
\leq \frac{9}{2}\varepsilon \sum_{\ell=1}^{J} \sqrt{\rho_{J,\ell}\kappa_{J,\ell}} \eta_{J,\ell}^{\perp}\eta_{J,\ell},
\end{align*}
where the last step follows from $\hat \rho_{J,\ell}\kappa_{J,\ell} \leq 3/2 \rho_{J,\ell}$ for all $\ell \in [J]$.
By suitable choice of $C$ in the statement, we can absorb the factor $9/2$, and adjust the probability to $1-\exp(-u)$.
\end{proof}
\noindent
The guarantee for $\MIMPJ{J}(\tilde d)$ now follows as a Corollary.
\begin{corollary}
\label{cor:mim_projection_error}
Let \ref{ass_MIM:independence} - \ref{ass_MIM:subGauss} hold.
Fix $u > 0$, $\varepsilon > 0$, and define $\maxStandardNorm := \max_{\ell \in [J]}\Vert \centerRV{Z}|_{J,\ell}\Vert_{\psi_2}^4$,
$\minRho := \min_{\ell \in [J]}\rho_{J,\ell} $.
If $\rank(M_J) = \tilde d$ and $\lambda_{\tilde d}(M_J) > \gamma_J > 0$ we have
\begin{align}
\label{eq:error_complete bound}
&\N{\MIMPJ{J}(\tilde d) - P_J}_F  \leq \varepsilon\frac{ \sum_{\ell=1}^{J}\sqrt{\rho_{J,\ell}}\kappa_{J,\ell}\eta_{J,\ell}\eta_{J,\ell}^{\perp}}{\gamma_J},\quad \textrm{whenever}\\
\label{eq:N_requirement_complete_bound}
&N > C(D + u + \log(J))\left(\frac{\maxStandardNorm}{\minRho}
\vee \left(\frac{\sum_{\ell=1}^{J}\sqrt{\kappa_{J,\ell}}\eta_{J,\ell}^2}{\gamma_J}\right)^2 \vee \frac{1}{\varepsilon^2}\right).
\end{align}
\end{corollary}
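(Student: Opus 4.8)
The corollary follows by assembling Theorems~\ref{thm:M_J_parallel_bound} and~\ref{thm:M_J_perp_bound} through the Davis--Kahan inequality \eqref{eq:bound_after_davis_kahan}, so the plan is essentially bookkeeping of constants and probabilities. The first step is to pass from the Frobenius distance between the two projectors to the off-diagonal block. Since $\rank(M_J)=\tilde d$, the orthoprojector $P_J$ onto $\Im(M_J)=\Span\{b_{J,\ell}:\ell\in[J]\}$ has rank $\tilde d$, and $\MIMPJ{J}(\tilde d)$ has rank $\tilde d$ by construction, so the elementary identity $\N{\MIMPJ{J}(\tilde d)-P_J}_F=\sqrt{2}\,\N{\MIMPJ{J}(\tilde d)Q_J}_F$ for equal-rank orthoprojectors applies, with $Q_J:=\Id-P_J$. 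By \eqref{eq:bound_after_davis_kahan} it then suffices to (i) bound the denominator $\lambda_{\tilde d}(M_J)-\N{\hat M_J-M_J}$ below by a multiple of $\gamma_J$ and (ii) bound the numerator $\N{(\hat M_J-M_J)Q_J}_F$ above.

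For (i) I would invoke Theorem~\ref{thm:M_J_parallel_bound} with its parameter set to $\varepsilon_1:=\tfrac12\gamma_J\big/\sum_{\ell=1}^{J}\sqrt{\kappa_{J,\ell}}\,\eta_{J,\ell}^2$, which is positive as $\tilde d\ge1$. On the resulting event (probability at least $1-\exp(-u)$) one gets $\N{\hat M_J-M_J}\le\N{\hat M_J-M_J}_F\le\varepsilon_1\sum_\ell\sqrt{\kappa_{J,\ell}}\eta_{J,\ell}^2=\tfrac12\gamma_J$, hence $\lambda_{\tilde d}(M_J)-\N{\hat M_J-M_J}>\gamma_J-\tfrac12\gamma_J=\tfrac12\gamma_J>0$ using $\lambda_{\tilde d}(M_J)>\gamma_J$; in particular $\lambda_{\tilde d}(\hat M_J)>0$, so \eqref{eq:bound_after_davis_kahan} is legitimate. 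The point is that $\varepsilon_1^{-2}=4\big(\sum_\ell\sqrt{\kappa_{J,\ell}}\eta_{J,\ell}^2/\gamma_J\big)^2$, which, up to the factor $4$ absorbed into $C$, is the middle term in the maximum of \eqref{eq:N_requirement_complete_bound}.

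For (ii) I would invoke Theorem~\ref{thm:M_J_perp_bound} with its parameter set to $\varepsilon_2:=\varepsilon/(2\sqrt2)$; the prefactor $2\sqrt2$ absorbs the $\sqrt2$ from the projector identity and the $2$ from the $\tfrac12\gamma_J$ denominator. On an event of probability at least $1-\exp(-u)$ this gives $\N{(\hat M_J-M_J)Q_J}_F\le\varepsilon_2\sum_\ell\sqrt{\rho_{J,\ell}\kappa_{J,\ell}}\,\eta_{J,\ell}\eta_{J,\ell}^{\perp}\le\varepsilon_2\sum_\ell\sqrt{\rho_{J,\ell}}\,\kappa_{J,\ell}\,\eta_{J,\ell}\eta_{J,\ell}^{\perp}$, the last step being a harmless over-estimate since the directional condition number $\kappa_{J,\ell}$ is bounded below by a universal constant. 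Since $\varepsilon_2^{-2}=8\varepsilon^{-2}$, and both theorem applications also demand $N\gtrsim(D+u+\log J)\,\maxStandardNorm\,\minRho^{-1}$ (the constraint ensuring each level set is well populated, already needed in Lemma~\ref{lem:linear_regression_bounds}), taking the $\vee$ of the three constraints and absorbing universal constants into $C$ reproduces \eqref{eq:N_requirement_complete_bound}. Intersecting the two events (probability $\ge1-2\exp(-u)$, upgraded to $1-\exp(-u)$ by enlarging $C$) and chaining $\N{\MIMPJ{J}(\tilde d)-P_J}_F=\sqrt2\,\N{\MIMPJ{J}(\tilde d)Q_J}_F\le\sqrt2\,\N{(\hat M_J-M_J)Q_J}_F\big/(\tfrac12\gamma_J)$ yields the stated bound.

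The argument has no real obstacle; the only delicate part is the constant accounting, namely checking that the three rescaled parameters -- $\maxStandardNorm\,\minRho^{-1}$, $\varepsilon_1$, and $\varepsilon_2$ -- collapse to exactly the maximum in \eqref{eq:N_requirement_complete_bound}, and that the rescaling of $\varepsilon$ used to absorb $\sqrt2$ and the denominator factor stays consistent with the $\varepsilon^{-2}$ term there (which it does, both $C$ and $\varepsilon$ being free). The appearance of $\kappa_{J,\ell}$ rather than $\sqrt{\kappa_{J,\ell}}$ in the final display is precisely the over-estimate just mentioned (legitimate because $\kappa_{J,\ell}$ is bounded below, the $\psi_2$-norm dominating the $L^2$-norm together with $(u^\top\Sigma_{J,\ell}^{\dagger}u)(u^\top\Sigma_{J,\ell}u)\ge1$ for a unit $u$ in the range of $\Sigma_{J,\ell}$). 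Finally, the conclusion is understood to hold with probability at least $1-\exp(-u)$, which is implicit in the statement through the dependence on $u$.
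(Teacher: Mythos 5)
Your proposal is correct and follows essentially the same route as the paper: Weyl's bound plus Theorem~\ref{thm:M_J_parallel_bound} (with the rescaled parameter $\varepsilon_1$) to keep $\lambda_{\tilde d}(\hat M_J)>\tfrac12\gamma_J$, Theorem~\ref{thm:M_J_perp_bound} for the numerator, the $\sqrt2$ identity of Lemma~\ref{lem:projection_equality_frobenius}, the Davis--Kahan bound \eqref{eq:bound_after_davis_kahan}, and a union bound with constants absorbed into $C$. Your explicit remark reconciling the $\kappa_{J,\ell}$ versus $\sqrt{\kappa_{J,\ell}}$ mismatch between Theorem~\ref{thm:M_J_perp_bound} and \eqref{eq:error_complete bound} is a point the paper's proof silently glosses over (compare \eqref{eq:projection_error_reformulated_bound}, which reverts to $\sqrt{\kappa_{J,\ell}}$), and your treatment of it is sound.
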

\begin{proof}
Using Weyl's bound $\lambda_{\tilde d}(\hat M_J)   \geq \lambda_{\tilde d}( M_J ) - \N{\hat M_J - M_J}$ \cite{weyl1912asymptotische}, and Theorem \ref{thm:M_J_parallel_bound}, we have
with probability $1-\exp(-u)$ the guarantee $\lambda_{\tilde d}(\hat M_J) \geq \gamma_J - \N{\hat M_J - M_J}> \frac{1}{2}\gamma_J$,
whenever the number of samples exceeds
\begin{align*}
N > C(D + u + \log(J))\left(\frac{\maxStandardNorm}{\minRho} \vee \left(\frac{\sum_{\ell=1}^{J}\sqrt{\kappa_{J,\ell}}\eta_{J,\ell}^2}{\gamma_J}\right)^2\right)
\end{align*}
Furthermore, Theorem \ref{thm:M_J_perp_bound}
implies
\begin{align*}
\bbP\left(\N{\left(\hat M_J - M_J\right)Q_J}_F \leq \varepsilon \sum_{\ell=1}^{J}\sqrt{\rho_{J,\ell}\kappa_{J,\ell}}\eta_{J,\ell}\eta_{J,\ell}^{\perp}\right) \geq 1-\exp(-u),
\end{align*}
whenever $N > C(D + u + \log(J))(\maxStandardNorm \minRho^{-1}  \vee \varepsilon^{-2})$.
Using the union bound over both events, the conclusion in the statement follows with probability at least $1-2\exp(-u)$
from $\Vert \MIMPJ{J} - P_J\Vert_F \leq \sqrt{2}\Vert\MIMPJ{J}(\tilde d)Q_J\Vert_F$
(see Lemma \ref{lem:projection_equality_frobenius} in the Appendix), and the Davis-Kahan bound \eqref{eq:bound_after_davis_kahan}.
\end{proof}

\noindent
Assuming $\varepsilon^{-2}$ maximizes \eqref{eq:N_requirement_complete_bound},
Corollary \ref{cor:mim_projection_error} implies the bound
\begin{equation}
\label{eq:projection_error_reformulated_bound}
\N{\MIMPJ{J}(\tilde d) - P_J}_F \leq C \frac{\sqrt{1+\log(J)}\sum_{\ell=1}^{J}\sqrt{\rho_{J,\ell}\kappa_{J,\ell}}\eta_{J,\ell}\eta_{J,\ell}^{\perp}}{\gamma_J} \sqrt{\frac{D+u}{N}}.
\end{equation}
It separates the error into a leading factor, which only depends on the
hyperparameter $J$, respectively, the induced level set partition,
and a trailing factor, which describes
dependencies on $\sqrt{D}$, $N^{-1/2}$ and the confidence parameter $u$.
By using anisotropic bounds from Lemma \ref{lem:linear_regression_bounds},
we obtain a linear dependence on $\eta_{J,\ell}$, which scales like the term
$\Vert \hat b_{J,\ell}\Vert$, and a linear dependence
on $\eta_{J,\ell}^{\perp}$, which scales like $\Vert Q_{J}\hat b_{J,\ell}\Vert $. An isotropic
concentration bound for $\hat b_{J,\ell} - b_{J,\ell}$ would have instead given $\eta_{J,\ell}^2$,
which can be significantly worse judging by observations in Figure \ref{fig:levelset_details}.

\subsubsection{Data-driven proxy and tightness of \eqref{eq:projection_error_reformulated_bound}}
\label{subsubsec:mim_data_driven_proxy_and_tightness}
\begin{figure}[h]
        \centering
        \begin{subfigure}[t]{0.32\textwidth}
                \centering
                \includegraphics[width=\textwidth]{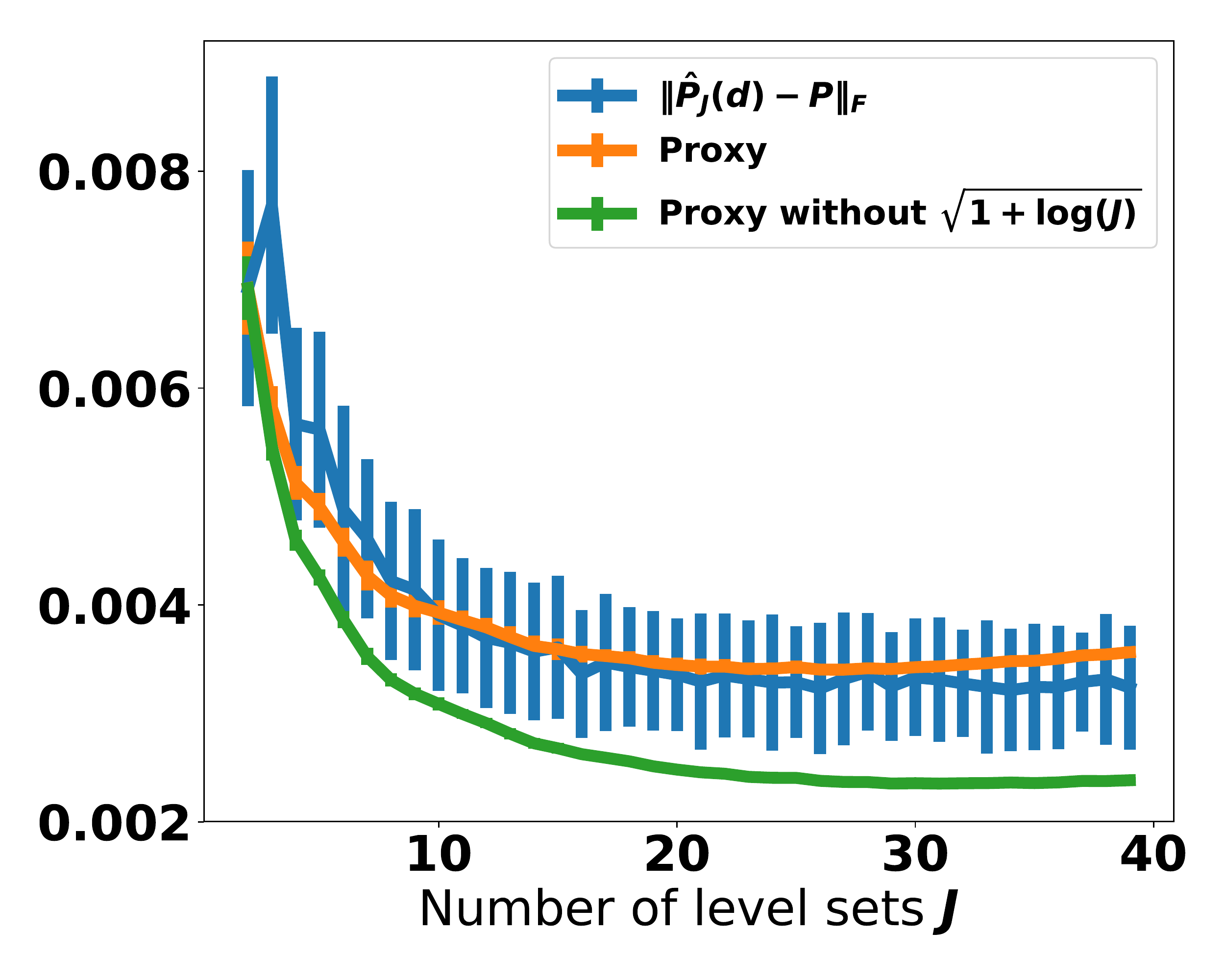}
                \caption{\scriptsize $g(x) = \frac{x_2}{1+(x_1 - 1)^2}$}
                \label{fig:mim_sim}
        \end{subfigure}
        \begin{subfigure}[t]{0.32\textwidth}
                \centering
                \includegraphics[width=\textwidth]{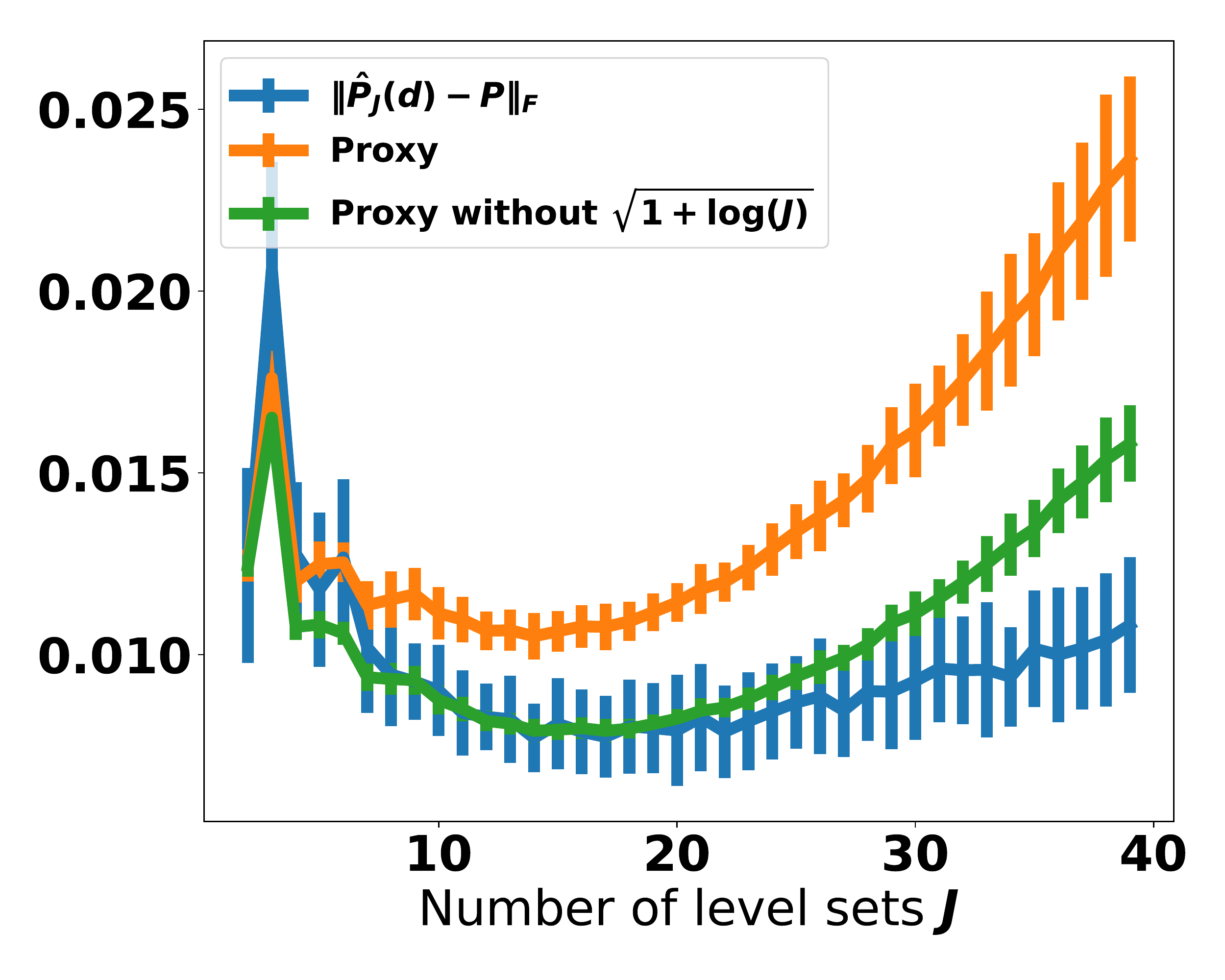}
                \caption{\scriptsize $g(x) = 0 \vee (x_1 - \frac{1}{10}) + \frac{x_2+1}{2}$}
                \label{fig:relu_net}
        \end{subfigure}
        \begin{subfigure}[t]{0.32\textwidth}
                \centering
                \includegraphics[width=\textwidth]{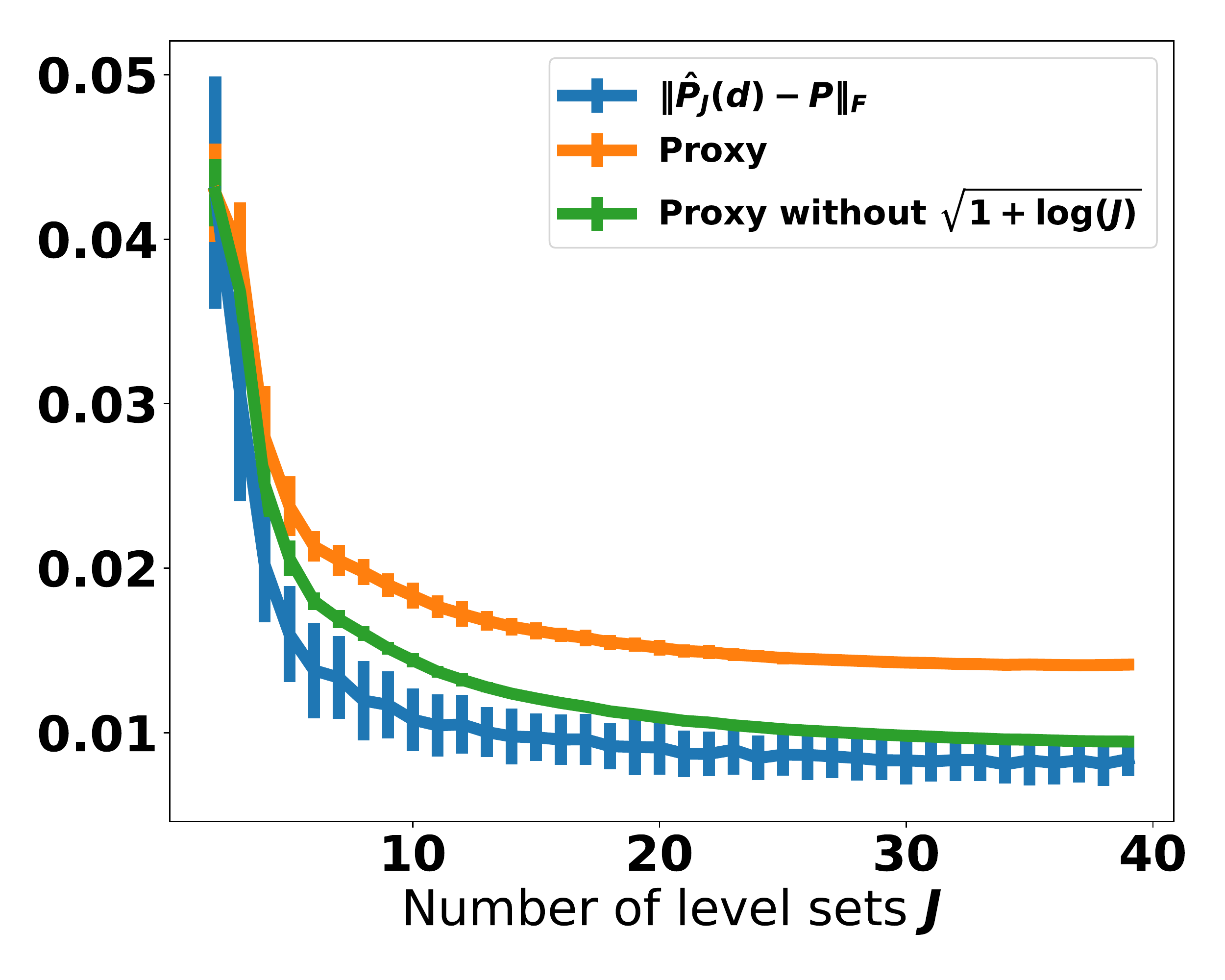}
                \caption{\scriptsize $g(x) = x_2e^{\sin(x_1)x_2 + x_2}$}
                \label{fig:mim_exp_sinus}
        \end{subfigure}
        \begin{subfigure}[t]{0.32\textwidth}
                \centering
                \includegraphics[width=\textwidth]{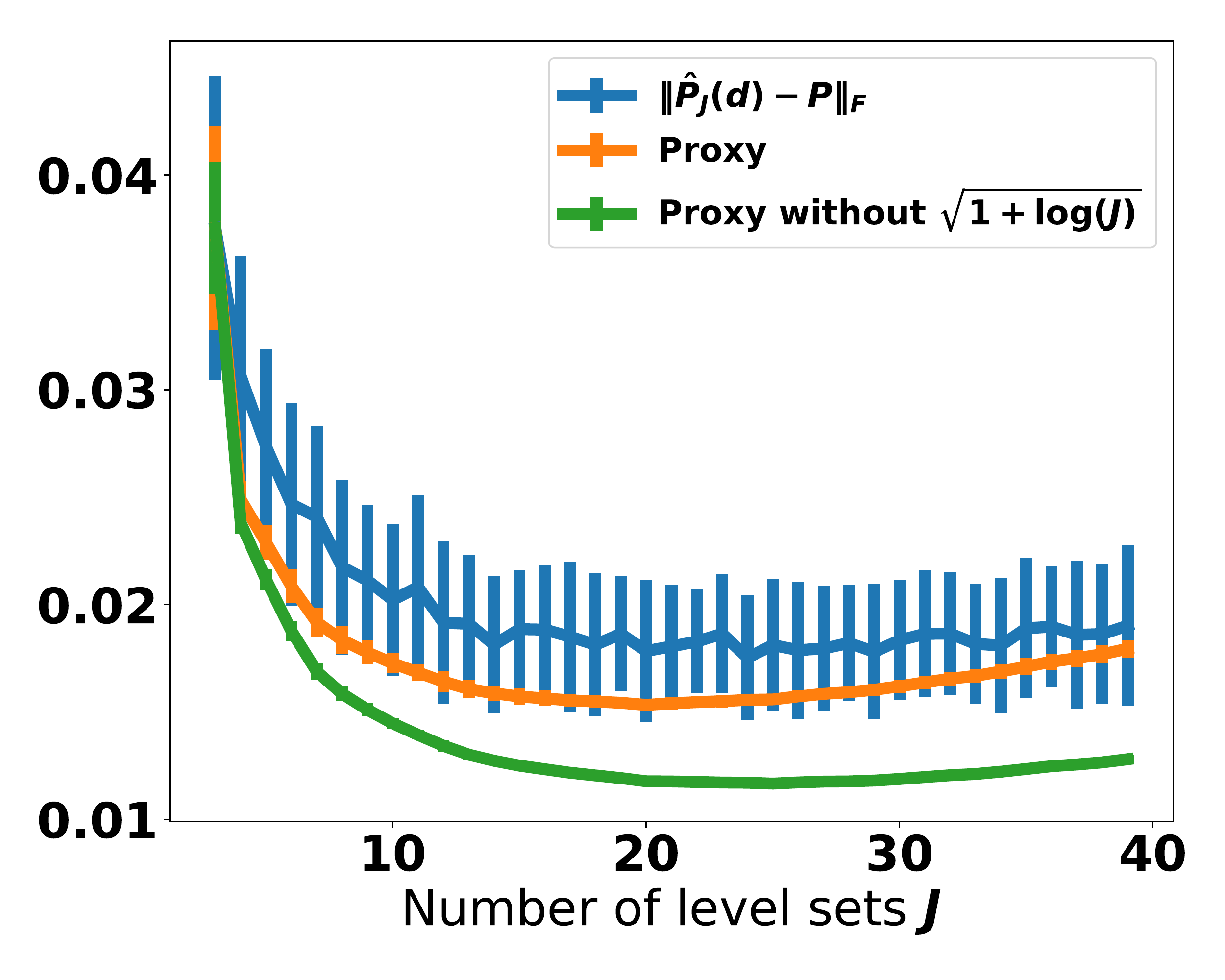}
                \caption{\scriptsize $g(x) = x_2e^{\sin(x_1)x_2 + x_3}$}
                \label{fig:mim_exp_sinus_3}
        \end{subfigure}
        \begin{subfigure}[t]{0.32\textwidth}
                \centering
                \includegraphics[width=\textwidth]{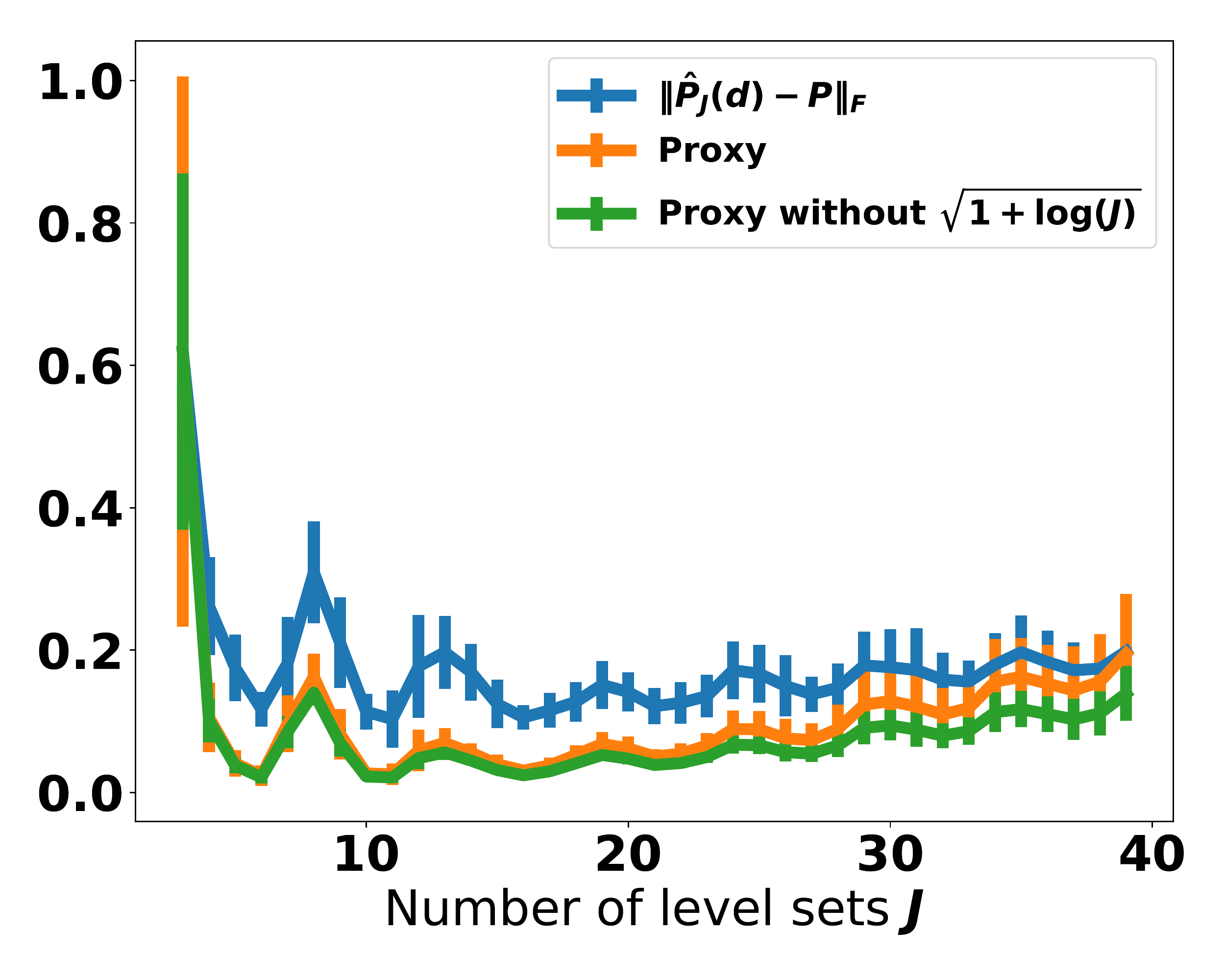}
                \caption{\scriptsize Function \eqref{eq:mim_example_pwlin}}
                \label{fig:mim_pw_lin}
        \end{subfigure}
        \begin{subfigure}[t]{0.32\textwidth}
                \centering
                \includegraphics[width=\textwidth]{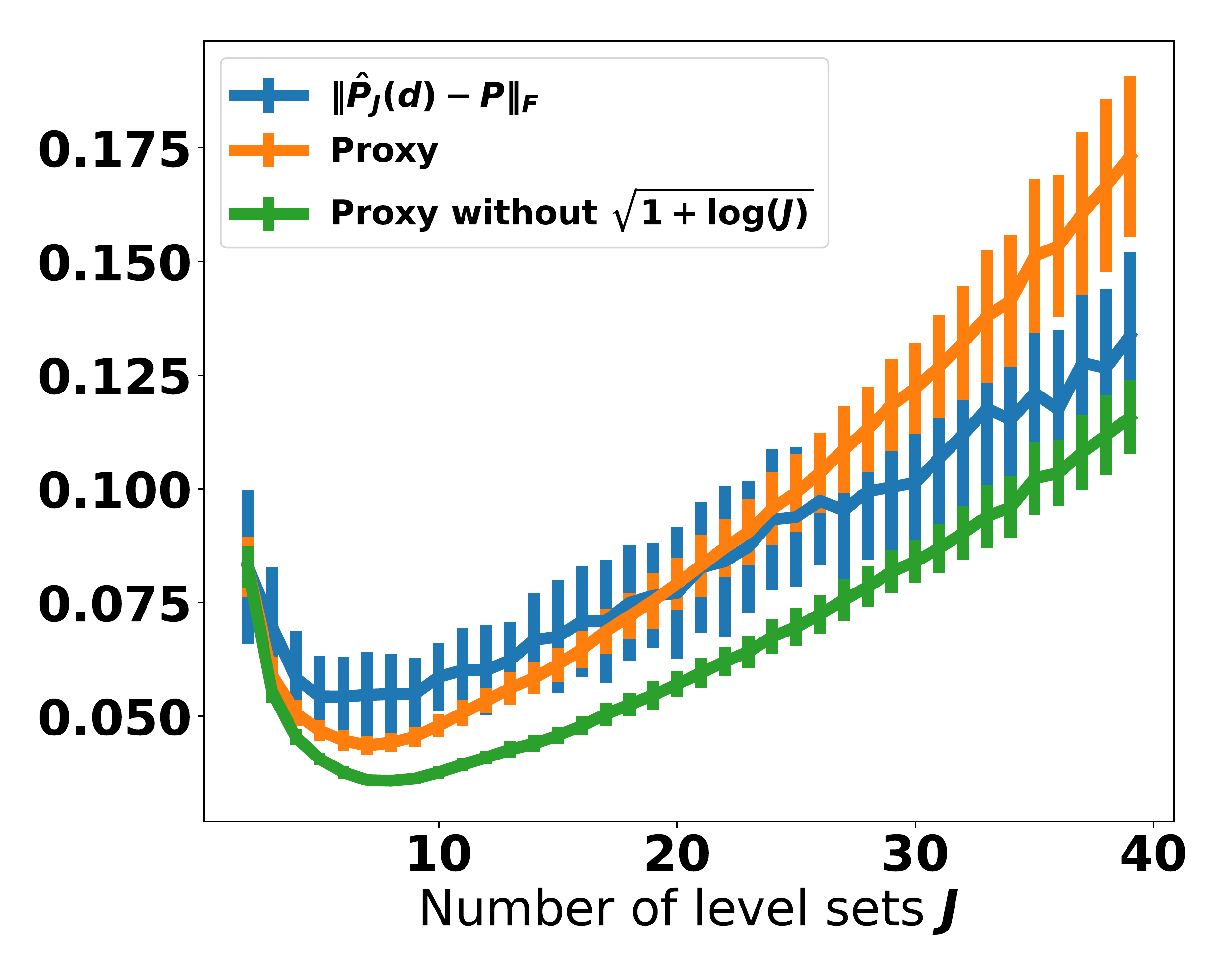}
                \caption{\scriptsize $g(x) = \frac{\sin(x_1) + \cos(x_2 - \frac{1}{4})}{(1+x_1^2)}$}
                \label{fig:mim_div_sincos}
        \end{subfigure}
        \caption{Figures plot the function $J\mapsto \Vert \MIMPJ{J}(d) - \MIMPTrue\Vert_F$ (blue),
        and compare it to the right hand side of \eqref{eq:mim_data_driven_proxy} with (orange) and without (green) the union bound factor
        $\sqrt{1+\log(J)}$.
        The proxy is rescaled by a constant to match $\Vert \MIMPJ{J}(d) - \MIMPTrue\Vert_F$ for $J=d$. Vertical bars indicate the standard deviation.}
\end{figure}

We now empirically study the tightness of \eqref{eq:projection_error_reformulated_bound}
when considering a fixed number of samples $N$ but varying the number of level sets $J$.
First, we develop a data-driven proxy to estimate the leading factor in \eqref{eq:projection_error_reformulated_bound}
from a given data set. Afterwards, we compare the proxy with the true error
on several synthetic examples.

\paragraph*{Data-driven proxy}
We have to replace $\gamma_J$, $\rho_{J,\ell}$, $\eta_{J,\ell}$, $\eta_{J,\ell}^{\perp}$
and $\kappa_{J,\ell}$ in \eqref{eq:projection_error_reformulated_bound} by quantities
that can be estimated from data. The first three quantities are approximated by
$\gamma_J \approx \lambda_{d}(\hat M_J)$, $\rho_{J,\ell} \approx \hat \rho_{J,\ell}$
and $\eta_{J,\ell} \approx \Vert \hat b_{J,\ell}\Vert$, where the last replacement
is motivated by the fact that $\eta_{J,\ell}$ is used to bound $\Vert \hat b_{J,\ell}\Vert \leq \Vert b_{J,\ell}\Vert + \Vert b_{J,\ell}-\hat b_{J,\ell}\Vert \lesssim \eta_{J,\ell}$
in the proofs of Theorems \ref{thm:M_J_parallel_bound} and \ref{thm:M_J_perp_bound}.
Furthermore, we use the conditional sample covariance $\hat \Sigma_{J,\ell}$, and
projections $\hat P_{J,\ell}:=\textN{\hat b_{J,\ell}}^{-2}\hat b_{J,\ell}\hat b_{J,\ell}^\top$ and  $\hat Q_{J,\ell} := \Id - \hat P_{J,\ell}$,
to compute an approximation to $\kappa_{J,\ell}$ by
\begin{align*}
\hat \kappa_{J,\ell}:=\max\left\{\N{\hat P_{J,\ell} \hat \Sigma_{J,\ell} \hat P_{J,\ell}}\N{\hat P_{J,\ell} \hat \Sigma_{J,\ell}^{\dagger} \hat P_{J,\ell}},
\N{\hat Q_{J,\ell} \hat \Sigma_{J,\ell} \hat Q_{J,\ell}}\N{\hat Q_{J,\ell} \hat \Sigma_{J,\ell}^{\dagger} \hat Q_{J,\ell}}\right\}.
\end{align*}
Note that replacing squared sub-Gaussian norms with spectral norms of the corresponding covariance
matrices can underestimate the true value of $\kappa_{J,\ell}$.
The same strategy is used for $\eta_{J,\ell}^{\perp}$, \emph{i.e.} we approximate $\eta_{J,\ell}^{\perp}$ by
\begin{align*}
\hat \eta_{J,\ell}^{\perp} := \sqrt{\lsmean{\CY_{J,\ell}}{\left(Y - \lsmean{\CY_{J,\ell}}{Y}\right)^2}\N{\hat Q_{J,\ell}\hat \Sigma_{J,\ell}^{\dagger}\hat Q_{J,\ell}}}.
\end{align*}
Combining everything, the data-driven proxy for the leading factor in \eqref{eq:projection_error_reformulated_bound}
without the union bound factor $\sqrt{1+\log(J)}$ is given by
\begin{equation}
\label{eq:mim_data_driven_proxy}
\frac{\sum_{\ell=1}^{J}\sqrt{\rho_{J,\ell}\kappa_{J,\ell}}\eta_{J,\ell}\eta_{J,\ell}^{\perp}}{\gamma_J}  \approx
\frac{\sum_{\ell=1}^{J}\sqrt{\hat \rho_{J,\ell}\hat \kappa_{J,\ell}}\N{\hat b_{J,\ell}}\hat \eta_{J,\ell}^{\perp}}{\lambda_{d}(\hat M_J)}.
\end{equation}
In order to reduce the variance in estimating \eqref{eq:mim_data_driven_proxy},
we further restrict the sum to level sets with at least $\SN{\CX_{J,\ell}} > 5D$ samples in the experiments below.

\paragraph*{Experiments} We sample $N = 80000$ points from $\Uni{\{X : \N{X}\leq 1\}}$ in $D = 20$ dimensions
and set $Y = g(A^\top X) + \zeta$, where $\zeta \sim \CN(0,10^{-4}\textrm{Var}(f(X)))$ and $A = [e_1|\ldots|e_d]$ with $e_i$ being the
$i$-th standard basis vector.
Each experiment is repeated 50 times and we report averaged results plus standard
deviations for different  link functions in Figures \ref{fig:mim_sim}-\ref{fig:mim_div_sincos}.

We observe that the map $J \mapsto \Vert \MIMPJ{J}(d) - P\Vert_F$ initially decreases
when increasing the number of level sets $J$ beyond $d$, and then either stalls, such
as in Figures \ref{fig:mim_sim}, \ref{fig:mim_exp_sinus}, \ref{fig:mim_pw_lin} and \ref{fig:mim_exp_sinus_3}, or increases as in
Figures  \ref{fig:relu_net} and \ref{fig:mim_div_sincos}. This behavior is captured well
by the data driven proxy \eqref{eq:mim_data_driven_proxy}. Furthermore,
even if the relation $J \mapsto \Vert \MIMPJ{J}(d) - P\Vert_F$ shows kinks as in Figure \ref{fig:mim_pw_lin},
where the link function is given by $g(x_1,x_2,x_3) = \sum_{i=1}^{3}g_i(x_i)$ with
\begin{equation}
\label{eq:mim_example_pwlin}
\begin{aligned}
g_1(x_1) &= 1(x_1 < 0) 0.1 x_1 + 1(x_1 \geq 0) 2.0\left(x_1 - 0.5\right) + 0.05,\\
g_2(x_2) &= 1(x_2 < 0) 2.0x_2 + 1(x_2 \geq 0) 0.1\left(x_2 - 0.5\right),\\
g_3(x_3) &= 1(x_3 < 0) 5.0x_3 + 1(x_3 \geq 0) 0.1 \left(x_3 - 0.2\right) + 1,
\end{aligned}
\end{equation}
the derived data-driven proxy reproduces the same behavior.
The experiments suggest that Corollary \ref{cor:mim_projection_error} characterizes the influence
of $J$ and the induced level set partition on the projection error well.
Furthermore, they raise the question whether $J$, which minimizes the data-driven proxy \eqref{eq:mim_data_driven_proxy}, can be used
for hyperparameter tuning in practice.
This is an interesting direction for future work, because choosing $J$ for the related class of inverse regression based methods has been identified as a notoriously
difficult problem, for which no good strategies exist \cite{ma2013review}.

\section{Regression in the reduced space}
\label{sec:mim_regression}
In this section we return to the multi-index model $Y = g(A^\top X) + \zeta$ with $\bbE[\zeta|X] = 0$
almost surely. Assumption $\zeta \independent X|A^\top X$ is not strictly required in this part. The second step to estimate the model is to learn the link function $g$, while leveraging the approximated projection $\MIMP \approx \MIMPTrue$, \emph{e.g.} constructed by using RCLS.
We restrict our analysis to two popular and commonly used regressors, namely
kNN-regression and piecewise polynomial regression. Our analysis reveals how the
error $\textN{\MIMP-\MIMPTrue}$ affects kNN and piecewise polynomials, if they are trained
on perturbed data $\{(\MIMP X_i, Y_i) :  i \in [N]\}$ instead of $\{(\MIMPTrue X_i, Y_i): i \in [N]\}$.
For simplicity,
we assume $\MIMP$ is deterministic and thus statistically independent of $\{(X_i,Y_i) : i \in [N]\}$.
In practice, statistical independency can be ensured by using separate data sets for learning
$\MIMP$ and performing the subsequent regression task.

To study regression rates, smoothness properties of the link function play an important role. We use
to the following standard definition \cite{gyorfi2006distribution}.
\begin{definition}
\label{def:smoothness}
Let $f : \bbR^{D} \rightarrow \bbR$, $s_1 \in \bbN_{0}$, $s_2 \in (0,1]$ and $s=s_1+s_2$. We say $f$ is $(L,s)$-smooth if
partial derivatives $\partial^{\alpha}f$ exist for all $\alpha \in \bbN_0^{D}$ with $\sum_{i}\alpha_i\leq s_1$,
and for all $s$ with $\sum_{i}\alpha_i  = s_1$ we have
\begin{align*}
\SN{\partial^{\alpha}f(z) - \partial^{\alpha}(z')} \leq L\N{z - z'}^{s_2}.
\end{align*}
\end{definition}

\noindent
The minimax rate for nonparametric estimation in $\bbR^{d}$ is well known \cite{gyorfi2006distribution,S98} and reads,
for $(L,s)$-smooth regression function $f$,
\begin{align}
\label{eq:minimax_rate_MIM}
\MSE{\hat f}{f} := \bbE \left(\hat f (X) - f(X)\right)^2 \asymp N^{-\frac{2s}{2s+d}}.
\end{align}
Similarly, the rate is a lower bound for nonparametric estimation of the multi-index model with $\dim(\MIMPTrue) =d$, because
we are still left with a nonparametric regression problem in $\bbR^d$ once $\MIMPTrue$ is identified.
In the following, we provide conditions on $\textN{\MIMP-\MIMPTrue}$ so that
the optimal rate \eqref{eq:minimax_rate_MIM} is achieved, when training on perturbed data. In the analysis,
we assume that $X$ is sub-Gaussian,
$\SN{f(X)}\leq 1$ almost surely, and $\textrm{Var}(\zeta|X) \leq \sigma_{\zeta}^2$ almost surely.

\subsection{kNN-regression}
\label{subsec:MIM_kNN_regression}
Let $x$ be a new data point and denote a reordering of the indices by $1(x),\ldots,N(x)$ so that
\begin{align*}
\N{\MIMP \left(x - X_{i(x)}\right)} \leq \N{\MIMP\left(x - X_{j(x)}\right)} \textrm{ for all } j \geq i \textrm{ and all } i,
\end{align*}
\emph{i.e.} $i(x)$ is the $i$-th nearest neighbor to $x$ after projecting onto $\Im(\MIMP)$.
The kNN-estimator is defined by $\hat f_k(x) := k^{-1}\sum_{i=1}^{k} Y_{i(x)}$ and
the following theorem characterizes the influence of the projection error on the generalization performance.
The proof resembles \cite{gyorfi2006distribution, kohler2006rates} and is given
in Appendix \ref{subsec:app_kNN_mim_proof}.
\begin{theorem}
\label{thm:regression_error_kNN}
Let $g$ be $(L,s)$-smooth for $s \in (0,1]$, and $d > 2s$. For $k = C_k N^{2s/(2s+d)}$,
we obtain
\begin{equation}
\label{eq:guarantee_kNN}
\MSE{\hat f_k}{f} \leq C_1 N^{-\frac{2s}{2s+d}} + C_2\log(N) \N{\MIMP-\MIMPTrue}^{2s},
\end{equation}
where $C_1$ depends on $d,\sigma_{\zeta},\N{X}_{\psi_2},C_k,L,s$, and $C_2$ additionally linearly on $D\N{X}_{\psi_2}^2$.
\end{theorem}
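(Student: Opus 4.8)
The plan is to carry over the classical kNN analysis of \cite{gyorfi2006distribution,kohler2006rates} to the projected pseudo-metric $z\mapsto\N{\MIMP z}$, and to track the extra error produced by using $\MIMP$ in place of $\MIMPTrue$. Writing $Y_{i(X)}=f(X_{i(X)})+\zeta_{i(X)}$, the first step is the decomposition
\begin{equation*}
\hat f_k(X)-f(X)=\frac1k\sum_{i=1}^k\bigl(f(X_{i(X)})-f(X)\bigr)+\frac1k\sum_{i=1}^k\zeta_{i(X)} .
\end{equation*}
Because the reordering $1(X),\dots,N(X)$ is a measurable function of $(X,X_1,\dots,X_N)$ only, conditioning on the covariates turns the second (noise) term into a mean-zero variable that is, moreover, conditionally uncorrelated with the first (now deterministic) term; hence the bias--noise cross term in $\MSE{\hat f_k}{f}$ vanishes and it suffices to bound the two terms separately in $L^2$.

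For the noise term, conditioning on $(X,X_1,\dots,X_N)$ and using $\bbE[\zeta_j\mid X_1,\dots,X_N]=0$, pairwise conditional independence of the $\zeta_j$'s, and $\Var(\zeta\mid X)\le\sigma_\zeta^2$ yields $\bbE\bigl[(k^{-1}\sum_{i\le k}\zeta_{i(X)})^2\bigr]\le\sigma_\zeta^2/k$, which for $k=C_kN^{2s/(2s+d)}$ is $(\sigma_\zeta^2/C_k)\,N^{-2s/(2s+d)}$. For the bias term, $(L,s)$-smoothness with $s\le1$ means $g$ is $s$-Hölder with constant $L$, and since the columns of $A$ are orthonormal $\N{A^\top z}=\N{\MIMPTrue z}$; thus $\SN{f(X_{i(X)})-f(X)}=\SN{g(A^\top X_{i(X)})-g(A^\top X)}\le L\N{\MIMPTrue(X_{i(X)}-X)}^s$. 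The key step is then the triangle-inequality split $\N{\MIMPTrue(X_{i(X)}-X)}\le\N{\MIMP(X_{i(X)}-X)}+\N{\MIMP-\MIMPTrue}\,\N{X_{i(X)}-X}$, which together with $(a+b)^s\le a^s+b^s$ separates the bias into an \emph{intrinsic} part built from the projected neighbour distances $\N{\MIMP(X_{i(X)}-X)}$ and a \emph{perturbation} part carrying the factor $\N{\MIMP-\MIMPTrue}^s$.

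For the intrinsic part, monotonicity of the ordered distances and Jensen's inequality (using $s\le1$) bound its square by $\bbE\N{\MIMP(X-X_{k(X)})}^{2s}\le(\bbE\N{\MIMP(X-X_{k(X)})}^2)^s$. Since $\MIMP$ is a rank-$d'$ orthoprojector with $d'\le d$ and $\MIMP X$ is sub-Gaussian on the $d'$-dimensional space $\Im(\MIMP)$, the standard kNN-distance estimate gives $\bbE\N{\MIMP(X-X_{k(X)})}^2\lesssim(k/N)^{2/d'}$ with a constant depending only on $d$ and $\N{X}_{\psi_2}$ --- the sub-Gaussian tails of $\MIMP X$ supply more than enough moments for this to hold without a compact-support hypothesis, and the assumption $d>2s$ guarantees $k=o(\sqrt N)$, so we are comfortably in the regime where the estimate applies. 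Raising to the power $s$ and inserting $k=C_kN^{2s/(2s+d)}$ gives a contribution $\lesssim N^{-2s/(2s+d)}$ (strictly faster if $d'<d$). For the perturbation part, bounding $\N{X_{i(X)}-X}\le\max_{j\le N}\N{X_j-X}$ and using $t^{2s}\le1+t^2$ for $t\ge0$, $s\le1$, gives a contribution $\le L^2\N{\MIMP-\MIMPTrue}^{2s}\bigl(1+\bbE\max_{j\le N}\N{X_j-X}^2\bigr)$; a union bound over the $N$ sub-exponential variables $\N{X_j-X}^2$ (using sub-Gaussianity of $X$) gives $\bbE\max_{j\le N}\N{X_j-X}^2\lesssim D\N{X}_{\psi_2}^2\log N$, so this part is $\le C_2\log(N)\N{\MIMP-\MIMPTrue}^{2s}$ with $C_2$ proportional to $L^2 D\N{X}_{\psi_2}^2$. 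Summing the three bounds yields \eqref{eq:guarantee_kNN}.

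The main obstacle will be the intrinsic kNN-distance estimate for the \emph{projected}, non-compactly supported data: one has to verify that projecting onto $\Im(\MIMP)$ does not spoil the $(k/N)^{2/d'}$ scaling --- in particular that the relevant dimension is $\dim\Im(\MIMP)\le d$ rather than $D$ --- and control the tails of $\MIMP X$ so that no spurious logarithmic factor enters $C_1$ (it appears only in the perturbation constant $C_2$). A minor additional point is the non-uniqueness of nearest neighbours under ties, dealt with as usual via a fixed measurable tie-breaking rule; and the boundedness assumption $\SN{f(X)}\le1$ may be invoked to replace the Hölder bound on $\SN{f(X_{i(X)})-f(X)}$ by the trivial bound $2$ whenever the former is larger, which only makes the tail terms easier to control.
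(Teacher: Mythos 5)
Your overall architecture matches the paper's: the bias--variance decomposition conditional on the covariates, the $\sigma_\zeta^2/k$ variance bound, the control of the perturbation term via $\N{\MIMP-\MIMPTrue}^{s}\max_{j}\N{X_j-X}^{s}$ together with a sub-exponential maximal inequality producing the $\log N$ factor, and a Kohler-type nearest-neighbor distance estimate for the intrinsic part. Your route through the intrinsic term is genuinely different and in one respect cleaner: splitting $\N{\MIMPTrue(X_{i(X)}-X)}\le\N{\MIMP(X_{i(X)}-X)}+\N{\MIMP-\MIMPTrue}\,\N{X_{i(X)}-X}$ keeps the $\hat d$-neighbors measured in the $\hat d$-metric, so they are honest $k$-nearest-neighbor distances of the i.i.d.\ projected sample and no neighbor swap is required. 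The paper instead uses $d\le\hat d+\delta\max_i\N{X_i-X}$, the optimality of the $\hat d$-neighbors to replace $\sum_{i\le k}\hat d(X_{i(X)},X)^s$ by $\sum_{i\le k}\hat d(\tilde X_{i(X)},X)^s$, and a conversion back to $d$, paying a factor $2$ on the perturbation but landing on the true-metric neighbors of $A^\top X$ in $\bbR^d$. Either choice is workable (yours needs $\rank\MIMP=d$, which is implicit since otherwise $\N{\MIMP-\MIMPTrue}=1$ and the bound is vacuous).

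The genuine gap is in your quantitative treatment of the intrinsic term. You apply Jensen to pass from $\bbE\N{\MIMP(X-X_{k(X)})}^{2s}$ to $(\bbE\N{\MIMP(X-X_{k(X)})}^{2})^{s}$ and then invoke $\bbE\N{\MIMP(X-X_{k(X)})}^{2}\lesssim(k/N)^{2/d'}$, $d'=\rank\MIMP$, for a sub-Gaussian, non-compactly supported marginal. This untruncated second-moment bound is false in low dimension: already for $d'=1$ and a standard normal the expected squared nearest-neighbor distance is dominated by the tails (heuristically $\int\phi(x)\,(N\phi(x))^{-2}\,dx=N^{-2}\int\phi^{-1}=\infty$), which is exactly why \cite{kohler2006rates} works with the truncated moment $\bbE\min\{\N{\cdot}^{2p},1\}$ under the condition $d>2p$. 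Your reading of the hypothesis $d>2s$ as guaranteeing $k=o(\sqrt N)$ is a non sequitur: the hypothesis exists so that Lemma 1 of \cite{kohler2006rates} applies at exponent $p=s$ for an unbounded marginal (the case $d=1$, $s<1/2$ is admitted by the theorem and breaks your chain), not to control the size of $k$. The repair is to skip the Jensen step, keep the truncation $\min\{\cdot,1\}$ (available from $\SN{f(X)}\le1$, which you mention only as an afterthought) throughout, and apply the Kohler lemma directly at exponent $2s$ to the nearest neighbor within each of the $k+1$ data blocks; this is where $d>2s$ is actually consumed, and with that correction your argument goes through.
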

\begin{remark}[$d > 2s$ assumption]
\label{rem:d_2s}
The condition $d > 2s$ in Theorem \ref{thm:regression_error_kNN} is not due to
the error $\Vert \MIMP - \MIMPTrue \Vert$, but arises from \cite[Lemma 1]{kohler2006rates},
where ordinary kNN is analyzed for unbounded marginal distributions. It has been shown in
\cite{gyorfi2006distribution} that achieving similar rates for $d\leq 2s$
requires an extra assumption of the marginal distribution of $X$ (boundedness does not suffice).
\end{remark}
\begin{remark}[Rate optimality]
Assuming $\Vert \MIMP - \MIMPTrue\Vert \in \CO(N^{-1/2})$, we observe that
the second term in \eqref{eq:guarantee_kNN} has order $N^{-s}$. Therefore,
Theorem \ref{thm:regression_error_kNN} ensures, up to the logarithmic factor, the optimal rate
$N^{-2s/(2s+d)}$ for $d \geq 2$.
The logarithmic factor disappears, if the marginal distribution of $X$ is bounded.
\end{remark}

\subsection{Piecewise polynomial regression}
\label{subsec:piecewise_polynomials}
Piecewise polynomial estimators can be defined in different ways as they depend on a partition of the underlying space.
Therefore we first have to describe the type of piecewise polynomials that we consider in the following.

Let $\hat A \in \bbR^{D\times d}$ contain column-wise an arbitrary orthonormal basis of $\Im(\MIMP)$.
Denote by $\Delta_l$ the set of dyadic cubes in $\bbR^{d}$, \emph{i.e.} the set of cubes with side length $2^{-l}$ and
corners in the set $\{2^{-l}(v_1,\ldots,v_d) : v_j \in \bbZ\}$, and let $\Delta_l(R) \subseteq \Delta_l$
be the subset that has non-empty intersection with $\{\hat A^\top z : z \in B_R\}$,
where $B_R = \{X \in \bbR^D : \N{X} \leq R\}$.
Moreover, let $\CP_{k}$ be the space of polynomials of degree $k$ in $\bbR^{d}$ and $1_{A}$ be
the characteristic function of a set $A$. The function space of piecewise polynomials
we consider is defined by
\begin{align}
\label{eq:function_space}
\CF(\hat A, l, k, R)\!:=\! \left\{f\!:\! f(x)\! =\! 1_{B_{R}}(x)\!\!\!\!\sum\limits_{c \in \Delta_l(R)}\!\!\!1_{c}\!\left(\hat A^\top x\right)\!p_{c}\left(\hat A^\top x\right),p_{c} \in \CP_{k} \right\}\!.
\end{align}
To construct the estimator, we perform empirical risk minimization
\begin{align}
\label{eq:empirical_risk_minimizer}
\tilde f := \argmin_{h\in \CF(\hat A, l,k,R)}\sum\limits_{i=1}^{N}\left(h(X_i) - Y_i\right)^2,
\end{align}
and then set $\hat f(x) := T_{[-1,1]}(\tilde f(x))$, where $T_{[-1,1]}(u) := \textrm{sign}(u)(\SN{u}\wedge 1)$.
Note that piecewise polynomial estimators are typically analyzed after thresholding to avoid
technical difficulties with potentially unbounded predictions (see also \cite{binev2007universal,gyorfi2006distribution}).

The following theorem characterizes the influence of $\Vert \MIMP - \MIMPTrue\Vert$ on the generalization
performance of the estimator.
\begin{theorem}
\label{thm:regression_error_piecewise_pols}
Let $g$ be $(L, s)$-smooth with $s = s_1 + s_2$, $s_1 \in \bbN_0$, $s_2 \in (0,1]$.
Choosing $l = \left\lceil \log_2(N)/(2s+d)\right\rceil$, $R^2 = D\N{X}_{\psi_2}^2 \log(N)$,
and $k = s_1$ we get
\begin{equation}
\begin{aligned}
\label{eq:result_piecewise_polynomials}
\MSE{\hat f}{f} &\!\leq\!C_1\log^{1 \vee \frac{d}{2}}(N) N^{-\frac{2s}{2s+d}} +  C_2 \log(N)^{1\wedge s} \N{\MIMP- \MIMPTrue}^{2\wedge 2s}\!\!,
\end{aligned}
\end{equation}
where the constants grow with $\sigma_{\zeta},d,s$, $L^* := Ld^{s_1/2}(1-\Vert \MIMP - \MIMPTrue\Vert^2)^{-s/2}$,
and $C_1$ depends linearly on $(D\N{X}_{\psi_2}^2)^{d/2}$, and $C_2$ linearly on $(D\N{X}_{\psi_2}^2)^{1 \wedge s}$.
\end{theorem}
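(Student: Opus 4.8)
The plan is to recognise $\hat f$ as a truncated least-squares estimator over the finite-dimensional linear space $\CF(\hat A,l,k,R)$ and apply the standard oracle inequality for such estimators under bounded-variance noise (in the spirit of \cite{gyorfi2006distribution}, with \cite{binev2007universal,kohler2006rates} supplying the dyadic piecewise-polynomial bookkeeping). Conditioning on the deterministic $\MIMP$, this reduces the claim to bounding the \emph{estimation error} $\lesssim \dim\CF(\hat A,l,k,R)\,\log(N)/N$ and the \emph{approximation error} $\inf_{h\in\CF(\hat A,l,k,R)}\bbE(h(X)-f(X))^2$; truncating $\tilde f$ to $[-1,1]$ only helps since $\SN{f(X)}\le 1$ a.s., and the piece of $f$ living outside $B_R$ costs at most $\bbP(\N{X}>R)\lesssim N^{-c}$ with $c$ arbitrarily large because $R^2=D\N{X}_{\psi_2}^2\log N$, hence is negligible. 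For the estimation error I would use $\dim\CF(\hat A,l,k,R)=\SN{\Delta_l(R)}\binom{k+d}{d}$ and bound $\SN{\Delta_l(R)}$ — the number of side-$2^{-l}$ dyadic cubes meeting the $d$-dimensional ball $\{\hat A^\top z:z\in B_R\}$ of radius $R$ — by $\lesssim(2^lR\sqrt d+1)^d$. Substituting $2^l\asymp N^{1/(2s+d)}$ and $R\asymp\sqrt D\,\N{X}_{\psi_2}\sqrt{\log N}$ gives $\dim\CF\lesssim\binom{s_1+d}{d}(D\N{X}_{\psi_2}^2)^{d/2}(\log N)^{d/2}N^{d/(2s+d)}$, so dividing by $N$ and absorbing the noise/truncation logarithm yields the first term of \eqref{eq:result_piecewise_polynomials}, the exponent $1\vee\tfrac d2$ of $\log N$ arising from the interplay of $R^d\asymp(\log N)^{d/2}$ with the concentration logarithm.

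The core is the approximation error, which is where $\N{\MIMP-\MIMPTrue}$ enters. Write $\delta:=\N{\MIMP-\MIMPTrue}$ (so $\delta<1$ once $N$ is large) and $M:=A^\top\hat A\in\bbR^{d\times d}$, whose singular values are the cosines of the principal angles between $\Im(A)$ and $\Im(\MIMP)$; hence $M$ is invertible with $\N{M}\le1$ and $\N{M^{-1}}=(1-\delta^2)^{-1/2}$. Using $\hat A^\top A=M^\top$ and $x=\MIMP x+(\Id-\MIMP)x$ one gets
\[
A^\top x=(M^\top)^{-1}\hat A^\top x-w(x),\qquad \N{w(x)}\le(1-\delta^2)^{-1/2}\,\delta\,\N{x},
\]
where $w(x)=(M^\top)^{-1}\hat A^\top(\Id-\MIMP)x$ and $\N{\hat A^\top(\Id-\MIMP)}\le\N{\MIMP-\MIMPTrue}$. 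Thus $f$ is, up to the perturbation $w$, a function of $\hat A^\top x$, so I would compare $f$ with $\hat g(\hat A^\top x)$ for $\hat g:=g\circ(M^\top)^{-1}$, which by the chain rule is $(L^*,s)$-smooth with $L^*=Ld^{s_1/2}(1-\delta^2)^{-s/2}$ (the $d^{s_1/2}$ absorbing the combinatorics of differentiating a composition, the $(1-\delta^2)^{-s/2}$ coming from $\N{M^{-1}}$), and then approximate $\hat g$ on each cell of $\Delta_l(R)$ by its degree-$k$ Taylor polynomial, $k=s_1$. This splits $\SN{f(x)-h(x)}$ for a suitable $h\in\CF(\hat A,l,k,R)$ into a \emph{subspace-mismatch} term $\SN{g(A^\top x)-\hat g(\hat A^\top x)}\lesssim L^*(\delta\N{x})^{1\wedge s}$ and a \emph{polynomial} term $\lesssim C(d,s)L^*\,2^{-ls}$. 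For $s\le1$ the first bound is immediate from $(L,s)$-smoothness of $g$; for $s>1$ it follows from a mean-value estimate once $\N{\nabla g}$ is controlled on a $\delta R$-neighbourhood of $\mathrm{supp}(A^\top X)$, which I would obtain from $(L,s)$-smoothness together with $\SN{f(X)}\le1$ a.s. via a standard interpolation inequality (cells of $\Delta_l(R)$ not meeting this neighbourhood contribute negligibly in $L^2(\mu)$). Squaring, taking expectations, and using that the truncation to $B_R$ controls $\bbE[\N{X}^{2\wedge 2s}\mathbf 1_{\{\N{X}\le R\}}]\lesssim R^{2\wedge 2s}=(D\N{X}_{\psi_2}^2\log N)^{1\wedge s}$, one obtains exactly the second term of \eqref{eq:result_piecewise_polynomials}, while the polynomial piece $(L^*)^2 2^{-2ls}\lesssim(L^*)^2N^{-2s/(2s+d)}$ is absorbed into the first term.

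The main obstacle is that last step: carrying out the change of coordinates $A^\top x\leftrightarrow\hat A^\top x$ cleanly, verifying that $\hat g=g\circ(M^\top)^{-1}$ inherits $(L,s)$-smoothness with the stated constant $L^*$, and — in the regime $s>1$ — extracting the bound on the lower-order derivatives of $g$ near $\mathrm{supp}(A^\top X)$ from $(L,s)$-smoothness and the a.s. bound on $f(X)$ alone. One must also check that the final estimate is insensitive, up to constants, to the particular orthonormal basis $\hat A$ of $\Im(\MIMP)$, since dyadic partitions are not rotation invariant. By contrast, the oracle inequality itself and the counting of dyadic cells are routine.
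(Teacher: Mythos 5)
Your proposal is correct and follows essentially the same route as the paper's proof: the oracle inequality of \cite[Theorem 11.3]{gyorfi2006distribution}, the count $\dim\CF(\hat A,l,k,R)\leq\binom{d+s_1}{s_1}\lceil(2^{l+1}R)^d\rceil$ of dyadic cells, the reparametrization $g^*=g\circ(\hat A^\top A)^{-1}$ (your $\hat g=g\circ(M^\top)^{-1}$ with $M=A^\top\hat A$) shown to be $(L^*,s)$-smooth with exactly the stated $L^*$, a cellwise degree-$s_1$ Taylor expansion for the polynomial part, and the sub-Gaussian tail bound $\bbP(\N{X}>R)\leq N^{-1}$ for the region outside $B_R$. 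The only quibbles are cosmetic: your perturbation should read $(M^\top)^{-1}\hat A^\top(\Id-\MIMPTrue)x$ rather than $(\Id-\MIMP)x$ (the latter is annihilated by $\hat A^\top$), and the $s>1$ issue you flag as the main obstacle is in fact passed over by the paper itself, which simply invokes the $(1\wedge s)$-H\"older/Lipschitz behaviour of $g^*$ in its Lemma on the subspace-mismatch term.
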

\begin{remark}[Boundedness and $\log$-factors]
For bounded $X$, the choice $R^2 \asymp \log(N)$ is not required and
$\log^{1 \vee \frac{d}{2}}(N)$ reduces to $\log(N)$. Moreover, $D\N{X}_{\psi_2}^2$ can
be replaced by the squared radius of a ball containing the support of $X$, which removes the dependency
on $D$ entirely.
\end{remark}
\begin{remark}[Rate optimality]
Assuming $\Vert \MIMP - \MIMPTrue \Vert \in \CO(N^{-1/2})$, we observe that
the last term in \eqref{eq:result_piecewise_polynomials} has order $N^{-s\wedge 1}$. Therefore,
Theorem \ref{thm:regression_error_piecewise_pols} ensures, up to $\log$-factors, the
optimal rate $N^{-2s/(2s+d)}$ for $ d = 1 $ and $ s \geq \frac {1}{2} $, or $ d \geq 2 $ and $ s > 0 $.
\end{remark}

\paragraph*{Proof sketch} The first step is to apply the following well-known result.

\begin{theorem}[Theorem 11.3 in \cite{gyorfi2006distribution}]
\label{lem:combined_bound}
Let $\CF$ be a vector space of functions $f : \bbR^{D} \rightarrow [-1,1]$. Assume
$Y = f(X) + \zeta$, $\bbE[Y|X] = f(X)$ and $\textrm{Var}(\zeta|X=x) \leq \sigma_{\zeta}^2$.
Denote by $\tilde f$ the empirical risk minimizer in $\CF$ over $N$ iid. copies of $(X,Y)$, and let $\hat f = T_{[-1,1]}(\tilde f)$.
Then there exists a universal constant $C$ such that
\begin{align}
\label{eq:combined_bound}
\MSE{\hat f}{f}\leq C(\sigma_{\zeta}^2 \vee 1)\frac{\log(N) + \dim(\CF)}{N} + C\inf\limits_{h \in \CF}\MSE{h}{f}.
\end{align}
\end{theorem}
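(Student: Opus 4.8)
This is a classical oracle inequality for truncated least squares over a finite-dimensional linear class, and the plan is to follow the empirical-process template used for least squares estimates (cf.\ \cite[Ch.~11]{gyorfi2006distribution}): bound $\MSE{\hat f}{f}$ by splitting it into the \emph{empirical error} of $\tilde f$ on the sample and an \emph{empirical-to-population transfer} term for the truncated estimator. Write $K := \dim(\CF)$. As a harmless preliminary, since $|f(X)| \le 1$ a.s.\ one may truncate the noise $\zeta_i$ at level $\asymp \sigma_\zeta\sqrt{\log N}$, absorbing the discarded tail into a $o(1/N)$ term using $\mathrm{Var}(\zeta\mid X)\le\sigma_\zeta^2$; so we may and do assume the $Y_i$ are bounded by $1+\beta_N$ with $\beta_N\asymp\sigma_\zeta\sqrt{\log N}$.

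\textbf{Step 1 (empirical error).} Condition on the design $X_1,\dots,X_N$. Evaluation $g\mapsto(g(X_1),\dots,g(X_N))$ maps the vector space $\CF$ onto a subspace $V\subseteq\bbR^N$ with $\dim V \le K$, and by definition $(\tilde f(X_i))_i$ is the Euclidean projection $\Pi_V\mathbf Y$ of $\mathbf Y=(Y_i)_i$ onto $V$. Decomposing $\mathbf Y = \mathbf f + \boldsymbol\zeta$ with $\mathbf f = (f(X_i))_i$ and using that $\mathbf f - \Pi_V\mathbf f \perp \Pi_V\boldsymbol\zeta \in V$, Pythagoras gives
\begin{align*}
\frac1N\sum_{i=1}^N\big(\tilde f(X_i)-f(X_i)\big)^2 = \min_{h\in\CF}\frac1N\sum_{i=1}^N\big(h(X_i)-f(X_i)\big)^2 + \frac1N\big\|\Pi_V\boldsymbol\zeta\big\|^2 .
\end{align*}
Taking expectations, $\bbE\big[\tfrac1N\|\Pi_V\boldsymbol\zeta\|^2 \mid X_1,\dots,X_N\big] \le \sigma_\zeta^2\,\mathrm{tr}(\Pi_V)/N \le \sigma_\zeta^2 K/N$ (since $\Pi_V$ is independent of $\boldsymbol\zeta$ and the conditional variances are $\le\sigma_\zeta^2$), while $\bbE\big[\min_{h}\tfrac1N\sum_i(h(X_i)-f(X_i))^2\big]\le\inf_{h\in\CF}\MSE{h}{f}$ because the minimum lies below every fixed $h$. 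Hence $\bbE\big[\tfrac1N\sum_i(\tilde f(X_i)-f(X_i))^2\big] \le \inf_{h\in\CF}\MSE{h}{f} + \sigma_\zeta^2K/N$; and since $u\mapsto T_{[-1,1]}(u)$ is $1$-Lipschitz and $f(X_i)\in[-1,1]$ we also have $|\hat f(X_i)-f(X_i)|\le|\tilde f(X_i)-f(X_i)|$, so the same bound holds with $\hat f$ replacing $\tilde f$.

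\textbf{Step 2 (transfer to the population).} It remains to show that for a suitable universal $C_\alpha$ and some $\alpha>0$,
\begin{align*}
\bbE\,\MSE{\hat f}{f}\ \le\ (1+\alpha)\,\bbE\Big[\frac1N\sum_{i=1}^N\big(\hat f(X_i)-f(X_i)\big)^2\Big] + C_\alpha(\sigma_\zeta^2\vee1)\,\frac{\log N + K}{N}.
\end{align*}
The class $\CG:=\{\,T_{[-1,1]}g-f:g\in\CF\,\}$ is bounded by $2$, and since $\{T_{[-1,1]}g:g\in\CF\}$ is the image of a $K$-dimensional vector space under a fixed monotone map it has pseudo-dimension $\le K$, so its $L^1$ covering numbers on any sample of size $N$ are at most $(c/\varepsilon)^{cK}$. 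Feeding this into a Bernstein-type uniform deviation inequality for the empirical process indexed by $\CG$ — which simultaneously controls the design term $\MSE{\hat f}{f}-\tfrac1N\sum_i(\hat f(X_i)-f(X_i))^2$ and the noise cross-term $\tfrac1N\sum_i\zeta_i(\hat f(X_i)-f(X_i))$, the latter via the preliminary noise truncation and $\mathrm{Var}(\zeta\mid X)\le\sigma_\zeta^2$ — produces exactly the displayed inequality. Combining it with Step 1 and taking $\alpha=1$ yields \eqref{eq:combined_bound} after adjusting the constant.

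\textbf{Main obstacle.} The only nontrivial ingredient is the uniform deviation bound of Step 2: passing from empirical to population $L^2$ distances uniformly over the truncated linear class, in the presence of the (only variance-bounded) noise. This is where linearity is essential — it caps the pseudo-dimension by $\dim(\CF)$, hence the covering numbers by $(c/\varepsilon)^{c\dim(\CF)}$, which is precisely what yields the numerator $\log N + \dim(\CF)$ rather than a larger complexity; the noise contributes the factor $\sigma_\zeta^2\vee 1$ and must be handled by a separate truncation plus a Bernstein inequality. By contrast, Step 1 (the Pythagorean identity on the sample and pushing the infimum under the expectation) is entirely elementary.
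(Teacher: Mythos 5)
The paper does not prove this result; it cites it verbatim as Theorem~11.3 of Gy\"orfi, Kohler, Krzy\.zak and Walk, so there is no internal proof to compare against. Evaluating your sketch against the reference: the overall two-step plan is a valid and fairly standard template for this oracle inequality, and the Pythagorean identity in Step~1 (projection of $\mathbf Y = \mathbf f + \boldsymbol\zeta$ onto the evaluation subspace, orthogonality of $\mathbf f - \Pi_V\mathbf f$ and $\Pi_V\boldsymbol\zeta$, and the trace bound $\bbE\|\Pi_V\boldsymbol\zeta\|^2\le\sigma_\zeta^2\dim V$) is correct and clean, as is passing to $\hat f$ via the contraction property of $T_{[-1,1]}$. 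In fact this Step~1 is somewhat more transparent than the book's own $T_1/T_2$ excess-prediction-risk decomposition.

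Two things in the writeup are off. First, the preliminary noise truncation is both unnecessary and incorrectly justified. It is unnecessary because neither step uses it: the Pythagorean identity and trace bound in Step~1 hold with unbounded $\zeta$, and Step~2 compares empirical and population $L^2$ distances of the \emph{bounded} functions $\hat f$ and $f$ and therefore involves no noise term at all. It is incorrectly justified because under only $\mathrm{Var}(\zeta\mid X)\le\sigma_\zeta^2$, truncating $\zeta$ at $\beta_N\asymp\sigma_\zeta\sqrt{\log N}$ does not push the discarded tail to $o(1/N)$: the induced shift in the conditional mean and the retained second moment of the tail are of order $\sigma_\zeta/\beta_N$ or worse for heavy-tailed noise with finite variance, which is roughly $1/\sqrt{\log N}$ and far from $o(1/N)$. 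Second, the ``noise cross-term'' $\tfrac1N\sum_i\zeta_i(\hat f(X_i)-f(X_i))$ does not arise in your Step~2 decomposition; that term belongs to the alternative split used in the book (population excess prediction risk minus a multiple of the empirical excess prediction risk), and listing it here, together with the spurious $(\sigma_\zeta^2\vee1)$ factor in the Step~2 display, shows a conflation of two different proof structures. If you drop the truncation and the noise cross-term, Steps~1 and~2 as stated do give the claimed bound, provided you spell out that the localized uniform deviation over a class with covering numbers $(c/\varepsilon)^{cK}$ yields the rate $(\log N + \dim\CF)/N$ via a peeling or fixed-point argument rather than a single-scale covering bound, which would only give $\dim\CF\cdot\log N/N$.
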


\noindent
The first term in \eqref{eq:combined_bound} is the estimation error, which measures
the deviation of the performance of the empirical risk minimizer to the best performing estimator in $\CF$
when having access to the entire distribution. It decreases as more samples become available, but increases with the complexity
of $\CF$, here measured in terms of the dimensionality. It can be checked that $\CF(\hat A, l,k,R)$ is closed under addition and scalar multiplication and
is thus a vector space. A basis can be constructed by combining the standard polynomial basis
for each cell of the partition. Therefore $\dim(\CF(\hat A, l,k,R)) = \SN{\Delta_l(R)}{{d+k}\choose k}$,
where $\SN{\Delta_l(R)}$ is the number of cells required to cover $\{\hat A^\top z : z \in B_R\}$. Lemma \ref{lem:bound_number_of_cells} in the
Appendix proves $\SN{\Delta_l(R)} \leq \lceil (2^{l+1}R)^d \rceil$ and therefore
\begin{equation} \label{eq:dimF}
\dim(\CF(\hat A, l,k,R)) \leq {{d+k}\choose k}\lceil(2^{l+1}R)^d\rceil .
\end{equation}

The second term in \eqref{eq:combined_bound} is the approximation error, which measures how
well $f$ can be approximated by any function $h \in \CF(\hat A, l,k,R)$. Neglecting for a moment
the perturbation $\MIMP - \MIMPTrue$, it is known that a piecewise Taylor expansion
of $g$ can be used to approximate $g$ with an accuracy that increases
as the underlying partition is refined. The main difficulty in our case is to define
a piecewise polynomial function $h \in \CF(\hat A, l, k, R)$ that
approximates $g(A^\top x)$, despite the fact that $h$ depends on coordinates $\hat A^\top x$
instead of $A^\top x$.

To define such a function, we first prove the existence of a function
$g^*$ that approximates $g$ uniformly well, when being evaluated on $\hat A^\top x$. Precisely,
Lemma \ref{lem:close_function} in the Appendix shows
\begin{align*}
\SN{g^*(\hat A^\top x) - g(A^\top x)} \leq L^* \N{x}^{1 \wedge s}\Vert \MIMP - \MIMPTrue\Vert^{1 \wedge s},
\end{align*}
for some $(L^*,s)$-smooth function $g^*$. Now, by approximating $g^*$ through a piecewise
Taylor expansion, we can construct a function $h \in \CF(\hat A, l, k, R)$
which, using choices $l,\, k$ and $R$ as in Theorem \ref{thm:regression_error_piecewise_pols}, satisfies
\begin{align}
\label{eq:approximation_error_mean}
\MSE{h}{f} \leq C_1 N^{-\frac{2s}{2s+d}} + C_2 \log^{1 \wedge s}(N) \N{\MIMP - \MIMPTrue}^{2 \wedge 2s},
\end{align}
for constants $C_1$ depending on $L^*, d, s$, and $C_2$ depending on $L^*$ and linearly on
$(D\N{X}_{\psi_2}^2)^{1\wedge s}$ (see Corollary \ref{cor:mean_approximation_error}).
The proof of Theorem \ref{thm:regression_error_piecewise_pols}
concludes by combining Theorem \ref{lem:combined_bound}, the
dimensionality bound \eqref{eq:dimF},
and the approximation error bound \eqref{eq:approximation_error_mean} (see Appendix \ref{subsec:mim_pp_proof}).

\section{Numerical experiments}
\label{sec:mim_numerical_examples}

\begin{figure}[h]
        \centering
        \begin{subfigure}[t]{0.32\textwidth}
                \centering
                \includegraphics[width=\textwidth]{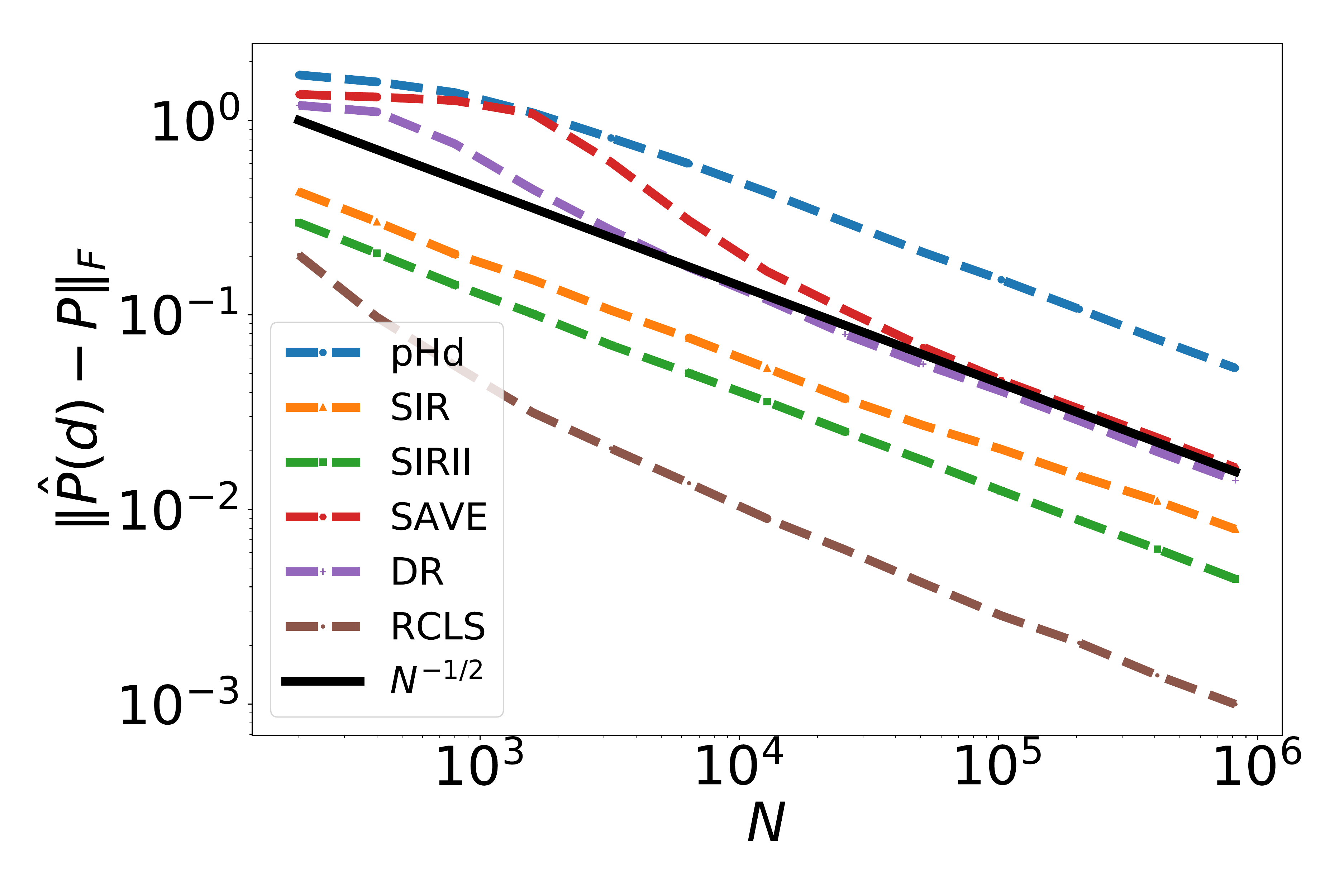}
                \caption{\scriptsize $g(x) = \frac{x_2}{1+(x_1 - 1)^2}$}
                \label{fig:mim_simple_division_comparison}
        \end{subfigure}
        \begin{subfigure}[t]{0.32\textwidth}
                \centering
                \includegraphics[width=\textwidth]{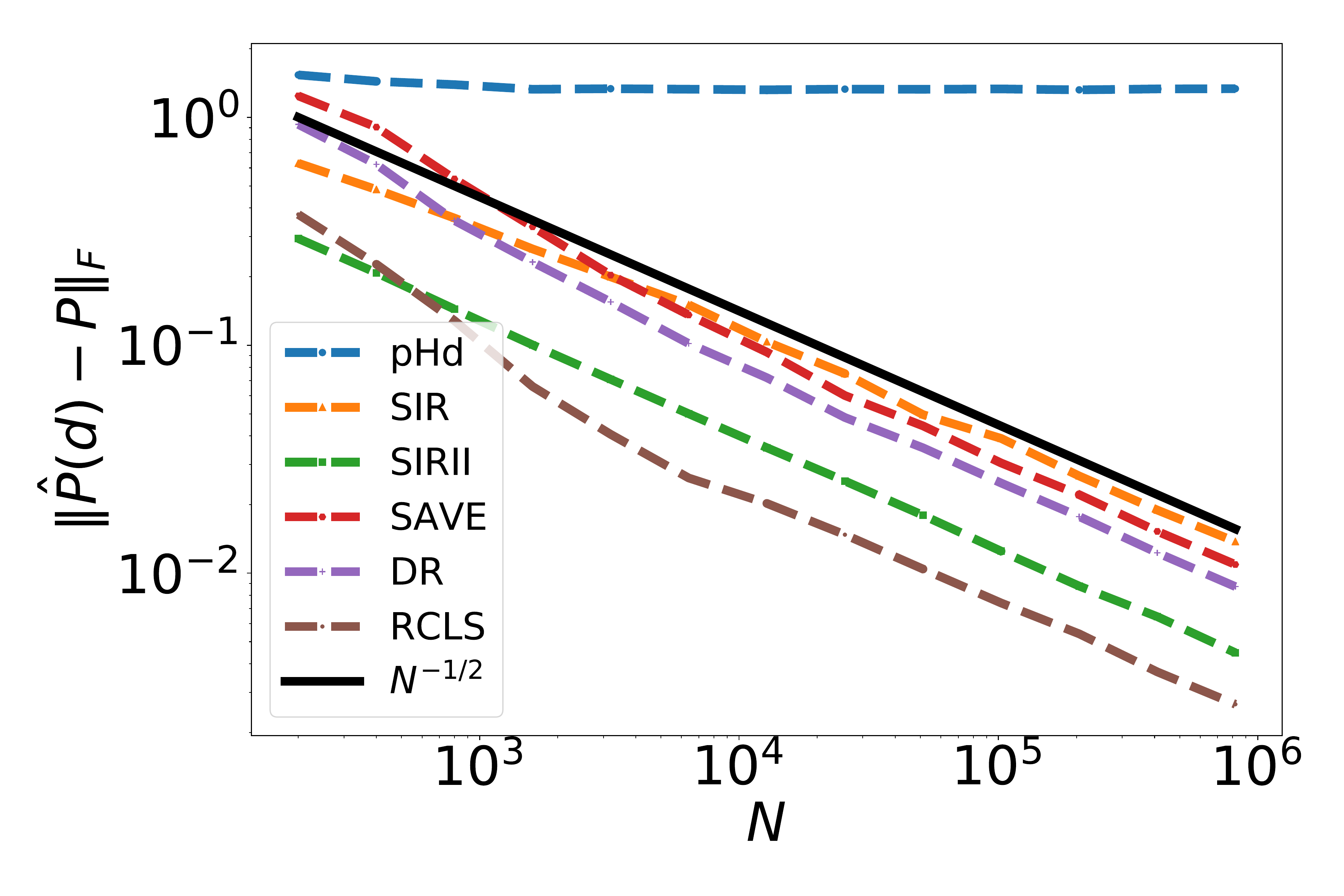}
                \caption{\scriptsize $g(x) = 0 \vee (x_1 - \frac{1}{10}) + \frac{x_2+1}{2}$}
                \label{fig:mim_relu_net_comparison}
        \end{subfigure}
        \begin{subfigure}[t]{0.32\textwidth}
                \centering
                \includegraphics[width=\textwidth]{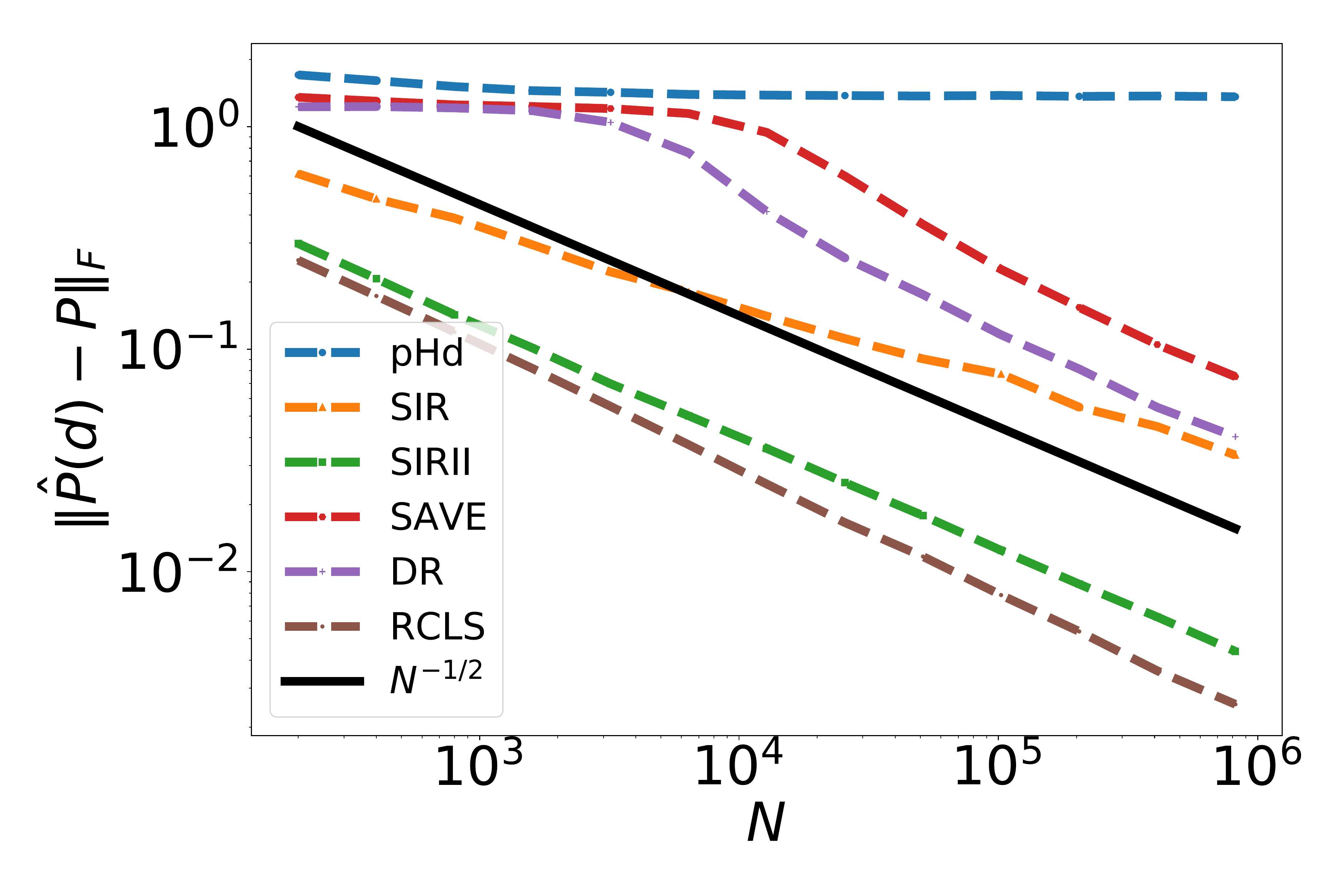}
                \caption{\scriptsize $g(x) = x_2e^{\sin(x_1)x_2 + x_2}$}
                \label{fig:mim_exp_sinus_comparison}
        \end{subfigure}
        \begin{subfigure}[t]{0.32\textwidth}
                \centering
                \includegraphics[width=\textwidth]{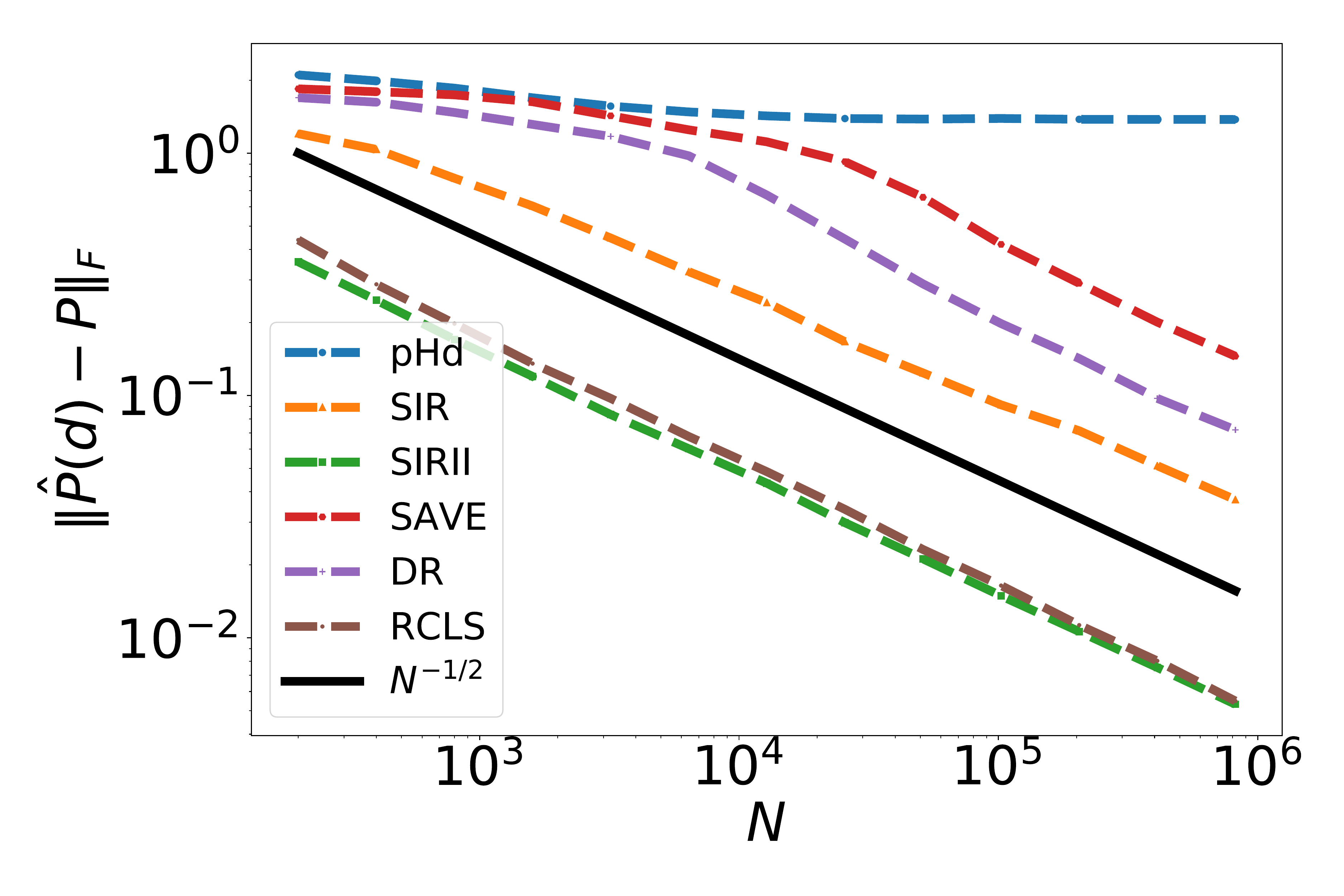}
                \caption{\scriptsize $g(x) = x_2e^{\sin(x_1)x_2 + x_3}$}
                \label{fig:mim_exp_sinus_3_comparison}
        \end{subfigure}
        \begin{subfigure}[t]{0.32\textwidth}
                \centering
                \includegraphics[width=\textwidth]{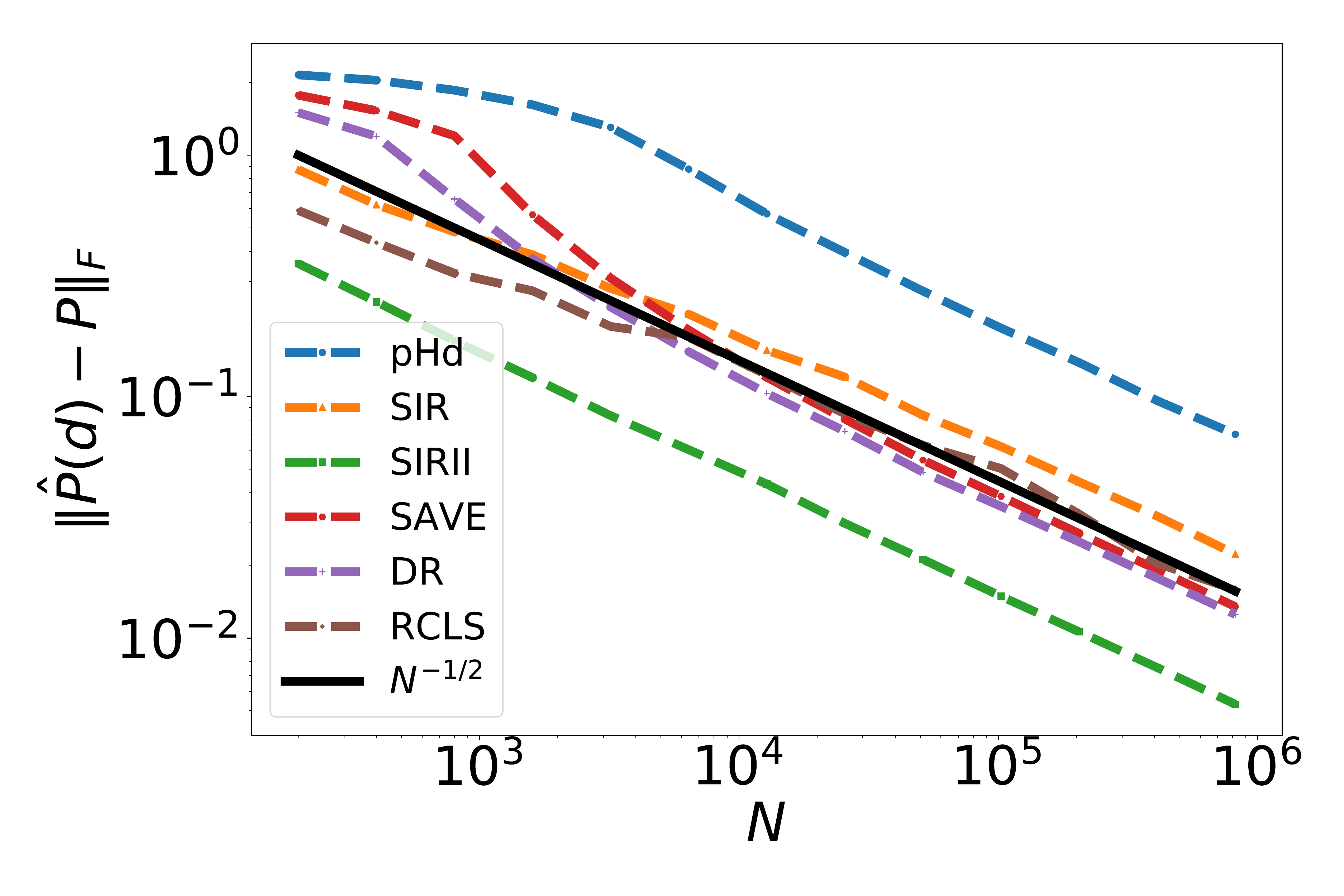}
                \caption{\scriptsize Function \eqref{eq:mim_example_pwlin}}
                \label{fig:mim_pw_lin_comparison}
        \end{subfigure}
        \begin{subfigure}[t]{0.32\textwidth}
                \centering
                \includegraphics[width=\textwidth]{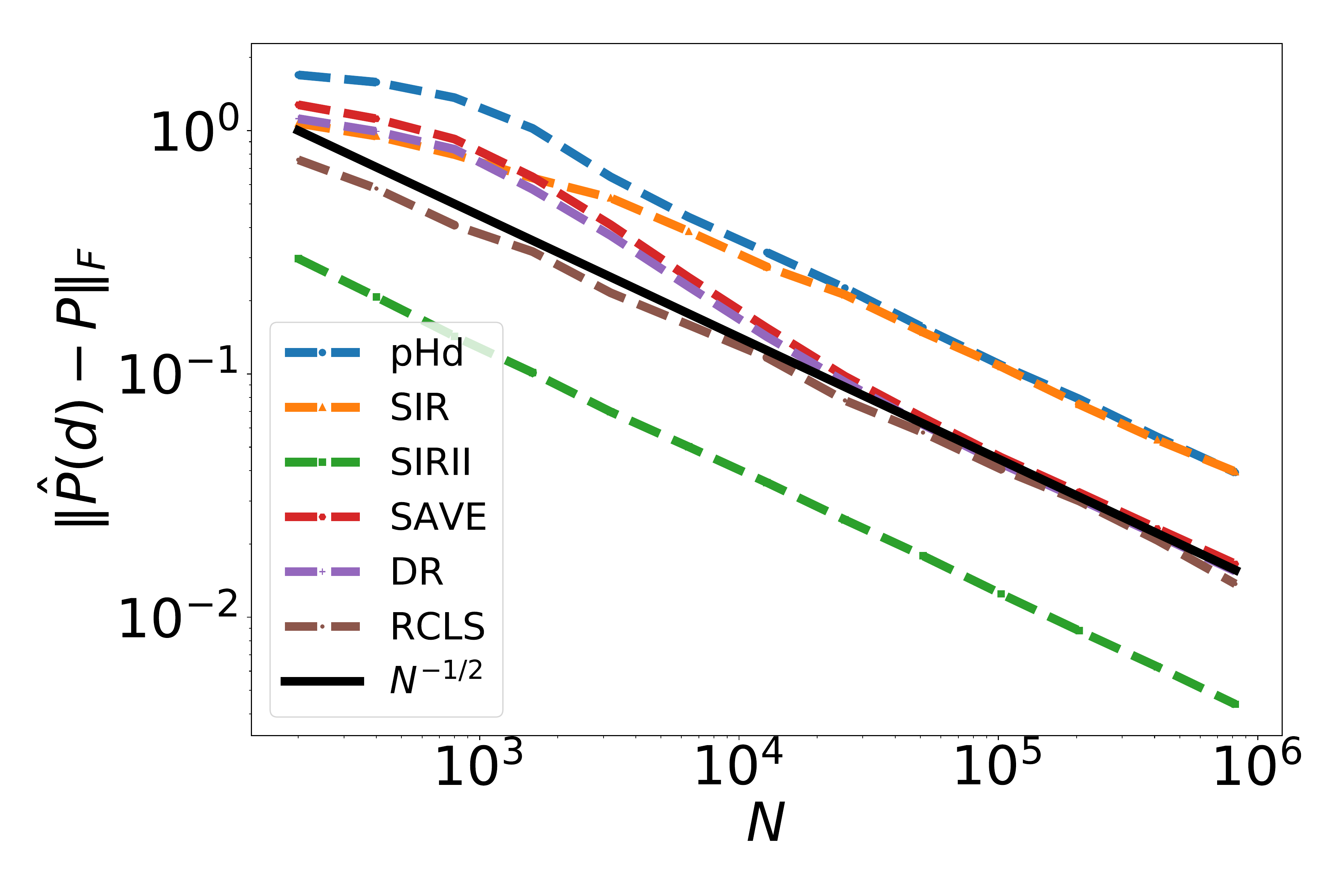}
                \caption{\scriptsize $g(x) = \frac{\sin(x_1) + \cos(x_2 - 1/4)}{(1+x_1^2)}$}
                \label{fig:mim_div_sincos_comparison}
        \end{subfigure}
        \caption{Projection errors $\Vert \MIMPJ{J}(d) - \MIMPTrue\Vert_F$ for different methods as a function of the sample size $N$.
        Except PHD, most estimators achieve an expected $N^{-1/2}$ convergence rate on all problems.
        RCLS and SIRII stand out by consistently achieving first and second best performances.}
\end{figure}

\noindent
We now compare RCLS to the most prominent inverse regression based techniques
SIR, SIRII, SAVE, pHd and DR that have been described extensively in Section \ref{subsec:overview_index_space_estimation}.
In the first part we consider synthetic problems
and we directly assess the performance by evaluating
$\Vert \MIMPJ{J}(d) - \MIMPTrue\Vert_F$, since the true index space is known.
In the second part, we consider real data sets
from the UCI data set repository. Here, the true index space is unknown, and we instead
compare recovered spaces $\Im(\MIMP)$ by measuring the predictive performance of kNN-regression,
when trained on $\{(\MIMP X_i, Y_i) : i \in [N]\}$. In both cases we construct
the partition $\CR_{J,\ell}$ using dyadic cells in the response domain as described in Section \ref{sec:mim_algorithm}.
The source code for all experiments is readily available
at \url{https://github.com/soply/sdr_toolbox} and \url{https://github.com/soply/mim_experiments}.

\subsection{Synthetic data sets}
\label{subsec:synthethic_data_experiments}

\noindent
We sample $X \sim \textrm{Uni}(\{X : \N{X} \leq 1\}) $ in $\bbR^{20}$, and generate the response by $Y = g(A^\top X) + \zeta$
for several functions $g$ and $\zeta \sim \CN(0, 0.01^2\Var(g(A^\top X)))$.
The index space is $A = [e_1|\ldots|e_d] \in \bbR^{D\times d}$
where $e_i$ is the $i$-th standard basis vector.
The hyperparameter $J$ is chosen optimally for SIR, SIRII, SAVE, DR and RCLS
to minimize the projection error within $J \in [100]$. No parameter is required for pHd.

We report projection errors averaged over 100 repetitions of the same experiment
in Figures \ref{fig:mim_simple_division_comparison} - \ref{fig:mim_div_sincos_comparison}. First,
notice that most estimators (except pHd in some cases)
achieve an expected $N^{-1/2}$ rate on all problems. pHd fails to detect linear trends
and therefore fails to detect the index space in some cases.
RCLS achieves the best performance in Figures \ref{fig:mim_simple_division_comparison}-\ref{fig:mim_exp_sinus_comparison},
is tied with SIRII in Figure \ref{fig:mim_exp_sinus_3_comparison},
and runner up to SIRII in the remaining cases.

In Figures \ref{fig:mim_simple_division_comparison}-\ref{fig:mim_exp_sinus_comparison}, where
RCLS improves upon competitors, we observe (temporary) convergence speeds beyond $N^{-1/2}$.
This can be explained by recent results in \cite{acapkarate}, when recognizing that
the multi-index models in \ref{fig:mim_simple_division_comparison}-\ref{fig:mim_exp_sinus_comparison}
have approximately monotone single-index structure if we restrict $(X,Y)$ to
small level sets $Y \in \CR_{J,\ell}$.
More precisely, \cite{acapkarate} shows that the convergence rate of RCLS can
be temporarily as large as $\log(N)N^{-1}$, if the data follows a monotone single-index model.
The reason for this increased convergence speed is that the variance of the local least squares estimator decreases
quadratically in the length of the level set $\CR_{J,\ell}$ \cite{acapkarate,kereta2019nonlinear}, and hence
the convergence speed may exceed $N^{-1/2}$ when choosing $J$ as an increasing function of $N$. These observations
suggest that RCLS is particularly suited for multi-index models where we assume
a response-local single-index structure to be a good fit.

\subsection{Real data sets}
\label{subsec:real_data_experiments}

\begin{table}
  \scriptsize
\begin{center}
\resizebox{\columnwidth}{!}{%
\begin{tabular}{@{}ccc H cc H H cc@{}} \toprule
     Characteristics& {Airquality} & {Ames} & {Auto} & {Boston} & {Concrete} & {EUStock} & {Istanbul} &  {Skillcraft} & Yacht\\ \midrule
    {$\log$-TF} & {No} & {Yes} & {No} & {Yes} & {No} & {Yes} & {No} & {Yes} & {Yes} \\ \midrule
    {$D,\ N$} & $11,\ 7393$& $7,\ 1197$& $4,\ 392$ & $12,\ 506$ & $8,\ 1030$ & $3,\ 1860$ & $7,\ 536$ & $16,\ 3338$ & $6,\ 307$\\ \midrule
    {Factor} & $10^{-1}$ & {$10^{5}$} & $10^{1}$ & $10^{1}$ & $10^{1}$ & $10^3$ & $10^{-2}$ & $10^2$ & $10^1$\\ \midrule
    {$\bar Y\pm\textrm{STD}(Y)$} & $9.95 \pm 4.03$ &  $1.74 \pm 0.67$  & {$2.35\pm 0.78$} & {$1.27 \pm 0.71$} &
    {$3.58\pm1.67$} & {$3.57\pm 0.97$} & {$0.16\pm2.11$} & {$1.15\pm 0.48$} & {$1.05 \pm 1.51$} \\\midrule \\ \\
    {Baselines} & \\ \midrule
    {LinReg} & $1.22\pm0.03$ & $\bf{0.23\pm0.02}$ & $0.44\pm0.05$ & $0.50\pm0.11$ & $1.06\pm0.06$ & $0.18\pm0.01$&$1.38\pm0.13$ & $0.14\pm0.03$ & $0.22\pm0.07$\\ \midrule
    {kNN}  & $1.03\pm0.02$ & $0.26\pm0.03$ & $0.39\pm0.04$ & $0.41\pm0.06$ & $0.89\pm0.08$ & $0.06\pm0.00$& $1.52\pm0.16$ & $0.17\pm0.01$ & $0.76\pm0.11$ \\
    $k$ & {$25.0 $} & {$9.8$} & {$16.3$} & {$6.8$} & {$5.5$} & {$6.0$} & {$17.8$} & {$9.8$} & {$1.1$} \\ \midrule \\ \\
    {SDR + KNN} & \\ \midrule
    {pHd} &$1.01\pm0.04$ & $0.29\pm0.04$ & $0.39+\pm0.04$ & $0.43\pm0.06$ & $0.89\pm0.04$ & $0.06\pm0.00$&$1.46\pm0.13$ & $0.26\pm0.01$ & $0.70\pm0.18$\\
    $d$ & {$10.2$}  & {$6.5$} & {$2.75$}  & {$7.55$} & {$6.65$} & {$3.0$} & {$2.2$} & {$7.0$} & {$5.4$}\\
    $k$ & {$10.0$} & {$7.7$} & {$15.5$}  & {$5.3$} & {$4.0$} & {$5.0$} & {$18.05$} & {$16.4$} & {$1.7$}\\\midrule
    {SIR} &$0.59\pm0.02$ & $0.24\pm0.04$ & $0.39+\pm0.04$ & $0.42\pm0.07$ & $0.87\pm0.06$ & $0.06\pm0.00$&$1.46\pm0.13$ & $0.08\pm0.01$ & $0.18\pm0.07$\\
    $d$ & {$6.15$}  & {$1.4$} & {$2.75$}  & {$4.8$} & {$6.1$} & {$3.0$} & {$2.2$} & {$1.85$} & {$1.15$}\\
    $k$ & {$10.0$} & {$11.4$} & {$15.5$}  & {$6.95$} & {$3.7$} & {$5.0$} & {$18.05$} & {$11.4$} & {$8.4$}\\
    $J$ & {$10.3$} & {$10.8$} & {$5.1$} & {$8.6$} & {$9.8$} & {$5.35$} & {$3.7$} & {$7.65$} & {$8.6$}\\ \midrule
    {SIRII} &$0.87\pm0.02$ & $0.27\pm0.04$ & $0.39\pm0.04$ & $0.44\pm0.07$ & $0.89\pm0.06$ & $0.06\pm0.00$&$1.46\pm0.13$ & $0.29\pm0.03$ & $0.53\pm0.21$\\
    $d$ & {$11.0$}  & {$7.0$} & {$2.75$}  & {$6.65$} & {$8.0$} & {$3.0$} & {$2.2$} & {$2.75$} & {$2.8$}\\
    $k$ & {$10.0$} & {$8.45$} & {$15.5$}  & {$5.3$} & {$4.0$} & {$5.0$} & {$18.05$} & {$16.75$} & {$6.7$}\\
    $J$ & {$12.0$} & {$9.8$} & {$5.1$} & {$8.2$} & {$10.4$} & {$5.35$} & {$3.7$} & {$3.75$} & {$4.6$}\\ \midrule
    {SAVE} &$0.58\pm0.01$ & $0.25\pm0.04$ & $0.39\pm0.07$ & $0.44\pm0.05$ & $0.79\pm0.09$ &$0.06\pm0.01$& $1.51\pm0.18$ & $0.09\pm0.01$ & $0.18\pm0.04$\\
    $d$ & {$6.0$} & {$2.9$} & {$3.1$} & {$7.55$} & {$3.85$} & {$3.0$} & {$7.0$} & {$4.65$} & {$1.7$} \\
    $k$ & {$10.0$} & {$10.7$} & {$13.25$} & {$6.35$} & {$3.2$} & {$5.0$} & {$15.95$} & {$9.5$} & {$7.35$} \\
    $J$ & {$13.4$} & {$12.0$} & {$6.65$} & {$9.9$} & {$8.3$} & {$5.25$} & {$8.0$} & {$8.75$} & {$8.7$} \\ \midrule
    {DR} &$0.58\pm0.02$ & $0.24\pm0.03$ & $0.37\pm0.05$ & $\bf{0.40\pm0.07}$ & $0.75\pm0.08$ &$0.06\pm0.01$& $1.49\pm0.15$ & $0.08\pm0.01$ & $0.18\pm0.06$\\
    $d$ & {$5.85$} & {$2.15$} & {$3.0$} & {$5.45$} & {$3.35$} & {$3.0$} & {$5.15$} & {$2.6$} & {$1.75$} \\
    $k$ & {$10.0$} & {$11.3$} & {$17.0$} & {$6.8$} & {$3.7$} & {$5.0$} & {$13.85$} & {$14.9$} & {$6.55$} \\
    $J$ & {$12.6$} & {$12.0$} & {$7.2$} & {$10.2$} & {$8.5$} & {$6.2$} & {$9.8$} & {$9.3$} & {$8.75$} \\ \midrule
    {RCLS} &$\bf{0.51\pm0.03}$ & $0.25\pm0.03$ & $0.39\pm0.04$ & $0.41\pm0.06$ & $\bf{0.72\pm0.06}$ &$0.06\pm0.00$& $1.46\pm0.14$ & $\bf{0.07\pm0.01}$ & $\bf{0.17\pm0.06}$\\
    $d$ & {$5.4$} & {$1.9$} & {$2.6 $} & {$5.1$} & {$3.25$} & {$3.0$} & {$4.35$} & {$2.95$} & {$1.75$} \\
    $k$ & {$10.0$}  & {$13.55$} & {$20.75$} & {$11.0$} & {$4.3$} & {$5.0$} & {$17.0$} & {$9.15$} & {$5.6$}\\
    $J$ & {$11.9$}  & {$6.4$} & {$5.45$} & {$5.7$} & {$5.7$} & {$6.2$} & {$5.5$} & {$3.2$} & {$7.3$}\\
    \bottomrule
\end{tabular}
}
\end{center}
\caption{RMSE, standard deviation, and cross-validated hyperparameters, over $20$ repetitions for several estimators
and UCI repository data sets. Values for $d$, $k$, $J$ are averages over different runs of each experiment.
First $5$ rows describe the data sets and their characteristics, and the remaining rows contain the results.
For a simplified presentation, we divide the mean and STD of RMSE, and the mean and STD of the data (5th row)
by the value in row \emph{Factor}.}
\label{tab:mim_numerical_experiments}
\end{table}

To compare RCLS with inverse regression based competitors on real data sets, we first compute an index space and then
compare the predictive performance when training a kNN-regressor on projected samples. More precisely,
we conduct the following steps.
\begin{enumerate}
\item Split the data set $\{(X_i,Y_i) : i \in [N]\}$ into training and test set $\CX_\textrm{Train}, \CY_\textrm{Train}$, and $\CX_\textrm{Test}, \CY_\textrm{Test}$
\item Use pHd, SIR, SIRII, SAVE, DR, RCLS on the training set to compute an
index space $\hat A$
\item Train a kNN-regressor using $\{(\hat A^\top X_i, Y_i) : X_i \in \CX_\textrm{Train}\}$
\item Crossvalidate over hyperparameters $d$ (index space dimension), $k$ (kNN parameter), and $J$ (number of level sets) using a hold-out validation set
of the training data
\item Compute the root mean squared error (RMSE) of the kNN-regressor on the test set
\end{enumerate}
\noindent
The test set contains $15\%$ of the data, while cross-validation is performed using a $10$-fold splitting strategy.
Each experiment is repeated 20 times and we report the mean and standard deviation.

We consider the UCI data sets \texttt{Airquality}, \texttt{Ames}-housing, \texttt{Boston}-housing,
\texttt{Concrete}, \texttt{Skillcraft} and \texttt{Yacht}. We standardize the components of $X$ to $[-1,1]$
and potentially perform a $\log$ transformation of $Y$ if the marginal has sparsely populated tails. This is indicated by
the \emph{$\log$-TF} row in Table \ref{tab:mim_numerical_experiments}. For some data sets,
we also exclude features with missing values, or, in the case of \texttt{Ames}, we exclude some
irrelevant and categorical features to reduce the complexity of the data set. Preprocessed
data sets can be found at \url{https://github.com/soply/db_hand}.

The RMSE and cross-validated hyperparameters are presented in Table \ref{tab:mim_numerical_experiments}.
To have robust baselines for comparison, we also compute the RMSE of standard
linear regression and kNN regression. We first see that
applying a dimension reduction technique improves the performance of linear regression
and ordinary kNN significantly on data sets \textit{Airquality}, \textit{Concrete}, \textit{Skillcraft} and \textit{Yacht}.
Furthermore, on these data sets, RCLS convinces by achieving the best
results among all competitors. Runner-up is DR, where SIR and SAVE share third and fourth place.
The results of pHd and SIRII are not convincing on most data sets.

The study confirms that RCLS is a viable alternative to prominent inverse regression methods.
The data sets were chosen because one-dimensional maps $e_i^\top X\mapsto Y$,
where $e_i$ is the $i$-th standard basis vector, show a certain degree of monotonicity. We believe that this promotes
a response-local monotone single-index structure, which is beneficial for
the accuracy of the RCLS approach as briefly discussed in Section \ref{subsec:synthethic_data_experiments}.

%%%%%%%%%%%%%%%%%%%%%%%%%%%%%%%%%%%%%%%%%%%%%%
%% Single Appendix:                         %%
%%%%%%%%%%%%%%%%%%%%%%%%%%%%%%%%%%%%%%%%%%%%%%
\begin{appendix}

\section*{}
\label{sec:app_chap_mim}

\subsection{Probabilistic results}
\label{subsec:app_additional_prob_results}
This section contains some probabilistic auxiliary results used in the paper.
\begin{lemma}
\label{lem:conditional_subgaussianity}
If $Z \in \bbR^D$ is sub-Gaussian and $E$ an event with $\bbP(E) > 0$, then $Z|E$
is sub-Gaussian with $\N{Z|E}_{\psi_2}\leq \N{Z}_{\psi_2}\bbP(E)^{-1}$.
\end{lemma}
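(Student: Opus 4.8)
The plan is to reduce to the one-dimensional situation and then run a short Hölder argument on the defining Orlicz integral. Recall that for a random \emph{vector} the sub-Gaussian norm is $\N{Z}_{\psi_2} = \sup_{v \in \bbS^{D-1}} \N{\langle Z, v\rangle}_{\psi_2}$, and that conditioning on $E$ commutes with taking linear functionals, so the law of $\langle Z, v\rangle$ conditioned on $E$ coincides with the law of $\langle Z\mid E,\, v\rangle$. Hence it suffices to prove the scalar statement: for a sub-Gaussian real random variable $W$ and an event $E$ with $\bbP(E) > 0$ one has $\N{W\mid E}_{\psi_2} \le \bbP(E)^{-1}\N{W}_{\psi_2}$; taking the supremum over unit $v$ (using $\N{\langle Z,v\rangle}_{\psi_2} \le \N{Z}_{\psi_2}$) then yields the lemma.

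For the scalar claim I would write $K := \N{W}_{\psi_2}$ and $p := \bbP(E)$, dispose of the trivial case $p = 1$ (then $W\mid E = W$ almost surely), and otherwise set $\alpha^2 := \log_2(2/p) > 1$ and $t := \alpha K$. Since $W^2/t^2 = (W^2/K^2)/\alpha^2$, Hölder's inequality with the conjugate exponents $\alpha^2$ and $\alpha^2/(\alpha^2-1)$ gives
\[
\bbE\big[\exp(W^2/t^2)\,\mathbf 1_E\big] = \bbE\big[(\exp(W^2/K^2))^{1/\alpha^2}\,\mathbf 1_E\big] \le \big(\bbE\exp(W^2/K^2)\big)^{1/\alpha^2}\,\bbP(E)^{\,1-1/\alpha^2} \le 2^{1/\alpha^2}\,p^{\,1-1/\alpha^2}.
\]
Dividing by $p$ and using the identity $x^{1/\log_2 x} = 2$ with $x = 2/p$ yields $\bbE[\exp(W^2/t^2)\mid E] \le (2/p)^{1/\alpha^2} = 2$, so by definition $\N{W\mid E}_{\psi_2} \le t = \sqrt{\log_2(2/p)}\,K$.

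It then remains only to check the elementary inequality $\sqrt{\log_2(2/p)} \le 1/p$ for $p \in (0,1]$, i.e. $1 + \log_2(1/p) \le 1/p^2$; substituting $u = 1/p \ge 1$ this reads $1 + \log_2 u \le u^2$, which holds with equality at $u = 1$ and whose gap has derivative $2u - (u\ln 2)^{-1} > 0$ for $u \ge 1$. Combining the two bounds gives $\N{W\mid E}_{\psi_2} \le p^{-1} K$, and taking the supremum over $v$ finishes the proof. The only point requiring a little care is the Hölder step: ensuring the exponents are genuinely conjugate and that $\alpha^2 > 1$ strictly, which is exactly why the case $\bbP(E) = 1$ is separated off first; the rest is routine.
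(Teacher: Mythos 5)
Your proof is correct, and it takes a genuinely different route from the paper's. The paper reduces to dimension one in the same way, but then invokes the moment characterization of sub-Gaussianity, $(\bbE[\SN{Z}^p])^{1/p} \le C\N{Z}_{\psi_2}\sqrt{p}$, bounds $\bbE[\SN{Z}^p \mid E] \le \bbE[\SN{Z}^p]/\bbP(E)$ by the law of total expectation, and uses $\bbP(E)^{1/p} \ge \bbP(E)$ to conclude. Because passing between the Orlicz-norm definition and the moment-growth characterization loses universal constants in both directions, the paper's argument really only delivers $\N{Z\mid E}_{\psi_2} \le C\N{Z}_{\psi_2}\bbP(E)^{-1}$, which is all that is needed downstream but does not literally match the constant-free statement. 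Your argument works directly with the defining exponential moment and a single application of H\"older's inequality with exponents $\alpha^2 = \log_2(2/\bbP(E))$ and its conjugate, and it yields the sharper, constant-free bound $\N{Z\mid E}_{\psi_2} \le \sqrt{\log_2(2/\bbP(E))}\,\N{Z}_{\psi_2}$, from which the stated bound follows by the elementary inequality you verify. The trade-off is that your route requires the small bookkeeping about conjugate exponents and the separate case $\bbP(E)=1$, while the paper's is a two-line consequence of a standard cited characterization; but yours proves the lemma exactly as stated and even improves the dependence on $\bbP(E)$ from $\bbP(E)^{-1}$ to $\sqrt{\log(2/\bbP(E))}$.
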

\begin{proof}
Assume without loss of generality $Z \in \bbR$. The result for the vector then follows
by the definition.
We use the characterization of sub-Gaussianity by the moment bound in \cite[Proposition 2.5.2, b)]{vershynin2018high}.
So let $p\geq 1$. By the law of total expectation it follows that $\bbE[\SN{Z}^p] = \bbE[\SN{Z}^p|E] \bbP(E) + \bbE[\SN{Z}^p|E^{C}]\bbP(E^C) \geq \bbE[\SN{Z}^p|E] \bbP(E)$.
Dividing $\bbP(E)$ and using monotonicity of the $p$-th root
\begin{align*}
 \left(\bbE[\SN{Z}^p|E]\right)^{1/p} \leq \frac{\left(\bbE[\SN{Z}^p]\right)^{1/p}}{\bbP(E)^{1/p}} \leq \frac{\left(\bbE[\SN{Z}^p]\right)^{1/p}}{\bbP(E)} \leq C\frac{\N{Z}_{\psi_2}\sqrt{p}}{\bbP(E)},
\end{align*}
where $C$ is some universal constant, the second inequality follows from $\bbP(E) \leq 1$, and the third from the sub-Gaussianity of $Z$.
\end{proof}

\begin{lemma}
\label{lem:norm_concentration}
If $X \in \bbR^D$ is sub-Gaussian, so is $\N{X}$,
with $ \| \| X \| \|_{\psi_2} \le \sqrt{D} \|X\|_{\psi_2} $.
\end{lemma}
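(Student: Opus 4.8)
The plan is to reduce the claim to the one-dimensional sub-Gaussian estimates for the coordinate marginals of $X$ and then glue them together with the arithmetic–geometric mean inequality.

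First I would recall that, by definition, the sub-Gaussian norm of the random vector $X$ dominates the sub-Gaussian norms of all its one-dimensional marginals. Setting $K := \N{X}_{\psi_2}$ and $X_i := e_i^\top X$ for $i \in [D]$, this gives $\N{X_i}_{\psi_2} \le K$ for every $i$, and hence $\bbE \exp(X_i^2/K^2) \le 2$. (The passage from the defining infimum to this clean inequality is the only point needing a word of care: for $t > \N{X_i}_{\psi_2}$ one has $\bbE\exp(X_i^2/t^2)\le 2$, and letting $t \downarrow \N{X_i}_{\psi_2}$ with monotone convergence, together with $K \ge \N{X_i}_{\psi_2}$, yields the displayed bound.)

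Next I would expand, using $\N{X}^2 = \sum_{i=1}^{D} X_i^2$ and the AM–GM inequality applied to the nonnegative numbers $\exp(X_i^2/K^2)$:
\[
\exp\!\left(\frac{\N{X}^2}{D K^2}\right) \;=\; \prod_{i=1}^{D} \exp\!\left(\frac{X_i^2}{K^2}\right)^{1/D} \;\le\; \frac{1}{D} \sum_{i=1}^{D} \exp\!\left(\frac{X_i^2}{K^2}\right).
\]
Taking expectations and inserting the coordinatewise bound from the first step gives $\bbE\exp\big(\N{X}^2/(D K^2)\big) \le \tfrac1D \cdot D \cdot 2 = 2$, which is precisely the definition of $\N{\N{X}}_{\psi_2} \le \sqrt{D}\,K = \sqrt{D}\,\N{X}_{\psi_2}$.

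I do not anticipate a genuine obstacle here: once the marginal bound $\bbE\exp(X_i^2/K^2)\le 2$ is recorded, the whole argument is the single line of AM–GM above. The only mildly delicate ingredient is the convention that the sub-Gaussian norm of a vector controls its marginals (which the paper already uses, e.g.\ in the proof of Lemma \ref{lem:conditional_subgaussianity}), and the continuity/monotone-convergence remark needed to turn the infimum in the definition into an honest inequality.
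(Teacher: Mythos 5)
Your proof is correct and follows essentially the same route as the paper: both factor $\exp\bigl(\N{X}^2/(DK^2)\bigr)$ into the coordinatewise terms and reduce to the marginal bound $\bbE\exp(X_i^2/K^2)\le 2$. The only difference is that you glue the factors with the pointwise AM--GM inequality followed by linearity of expectation, whereas the paper applies the generalized H\"older inequality with exponents $D$; the two yield the identical bound, and your variant is if anything slightly more elementary.
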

\begin{proof}
Using H\"older's inequality and the sub-Gaussianity of $X$ we compute
\begin{equation*}
 \bbE \left[ \exp\left( \frac{\| X \|^2}{D \|X\|_{\psi_2}^2} \right) \right] = \bbE \left[ \prod_{i=1}^D \exp\left( \frac{|e_i^TX|^2}{D \|X\|_{\psi_2}^2} \right) \right]
\le \left( \prod_{i=1}^D \bbE \left[ \exp\left( \frac{|e_i^TX|^2}{\|X\|_{\psi_2}^2} \right) \right] \right)^{1/D} \le 2. \qedhere
\end{equation*}
\end{proof}

\begin{lemma}
\label{lem:bound_rho}
Fix $u > 0$, $\varepsilon > 0$. Let $Y \in \bbR$ be a random variable, $\CR$ an interval, and
$\hat \bbP(Y \in \CR) := \SN{\{Y_i \in \CR\}}N^{-1}$ the empirical estimate of $\bbP(Y \in \CR)$
based on $N$ iid. samples. Then
\begin{align}
\label{eq:emp_density_hoeffding}
&\bbP \left(\SN{\hat \bbP(Y \in \CR) - \bbP(Y \in \CR)} \leq \varepsilon \right) \geq 1 - \exp(-u)  \hspace{3pt} \textrm{ if } N > Cu\varepsilon^{-2}, \\
\label{eq:emp_density_chernoff}
&\bbP \left( \SN{\hat \bbP(Y \in \CR) - \bbP(Y \in \CR)}\leq \frac{1}{2}\bbP(Y \in \CR)   \right) \geq 1 - 2\exp(-u) \hspace{3pt} \textrm{ if } N > C\frac{u}{\bbP(Y \in \CR)}.
\end{align}
\end{lemma}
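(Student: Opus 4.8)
The plan is to reduce both statements to concentration of the empirical mean of i.i.d.\ Bernoulli variables and then invoke textbook inequalities. Write $p := \bbP(Y \in \CR)$ and $B_i := \mathbf{1}\{Y_i \in \CR\}$ for $i \in [N]$; these are i.i.d.\ Bernoulli$(p)$ random variables, and $\hat\bbP(Y \in \CR) = N^{-1}\sum_{i=1}^{N} B_i =: \bar B$ satisfies $\bbE[\bar B] = p$. The degenerate case $p = 0$ is trivial, since then $\bar B \equiv 0 = p$ almost surely, so we may assume $p > 0$.

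For \eqref{eq:emp_density_hoeffding} I would apply Hoeffding's inequality to the independent summands $B_i - p \in [-1,1]$, obtaining $\bbP\big(\SN{\bar B - p} > \varepsilon\big) \leq 2\exp(-2N\varepsilon^2)$. Choosing the universal constant $C$ large enough that $N > Cu\varepsilon^{-2}$ forces $2N\varepsilon^2 \geq u + \log 2$, the right-hand side is at most $\exp(-u)$, which gives the claim. (Equivalently, one could use that $B_i - p$ is sub-Gaussian with norm bounded by an absolute constant and apply the sub-Gaussian tail bound for sums of independent variables; the resulting estimate is the same up to the value of $C$.)

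For \eqref{eq:emp_density_chernoff} the additive bound is too lossy when $p$ is small, so I would instead invoke the multiplicative (relative-error) Chernoff bounds with deviation parameter $\delta = \tfrac12$: the lower-tail estimate gives $\bbP\big(\bar B \leq \tfrac12 p\big) \leq \exp(-Np/8)$ and the upper-tail estimate gives $\bbP\big(\bar B \geq \tfrac32 p\big) \leq \exp(-Np/10)$. A union bound yields $\bbP\big(\SN{\bar B - p} > \tfrac12 p\big) \leq 2\exp(-Np/10)$, and taking $C \geq 10$ the hypothesis $N > Cu/p$ makes this at most $2\exp(-u)$, as required.

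The calculations here are entirely routine; there is no serious obstacle. The only point deserving care is the deliberate split between the additive form in \eqref{eq:emp_density_hoeffding} and the relative form in \eqref{eq:emp_density_chernoff}: the second is phrased with a multiplicative error $\tfrac12 p$ precisely so that it stays informative when $p = \rho_{J,\ell} = \bbP(Y \in \CR_{J,\ell})$ is tiny, which is exactly the regime that matters when controlling lightly populated level sets in the proofs of Theorems \ref{thm:M_J_parallel_bound} and \ref{thm:M_J_perp_bound}, and where an additive bound of the form \eqref{eq:emp_density_hoeffding} would be useless.
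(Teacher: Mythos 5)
Your proof is correct and follows essentially the same route as the paper: Hoeffding's inequality applied to the centred indicators for \eqref{eq:emp_density_hoeffding}, and a multiplicative Chernoff bound with relative deviation $\tfrac12$ for \eqref{eq:emp_density_chernoff} (the paper cites the Mitzenmacher--Upfal form yielding $2\exp(-Np/12)$, so only the immaterial universal constant differs). Your explicit treatment of the degenerate case $p=0$ and the absorption of the factor $2$ into $C$ are fine refinements of the same argument.
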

\begin{proof}
For \eqref{eq:emp_density_hoeffding}, we write
$\hat \bbP(Y \in \CR) - \bbP(Y \in \CR)$ as a sum of iid. centred random variables
$Z_i := 1_{\CR}(Y_i) - \bbP(Y \in \CR)$ that are
bounded by $\SN{Z_i}\leq 1$. The result follows by applying Hoeffding's inequality
\cite[Theorem 2.2.6]{vershynin2018high}.
For the Chernoff-type bound \eqref{eq:emp_density_chernoff} we recognize $\hat \bbP(Y \in \CR)N$ as a sum of independent
random variables $1\{Y_i \in \CR\}$ with values in $\{0,1\}$, and $\bbE[\hat \bbP(Y \in \CR)N] = \bbP(Y \in \CR)N$.
\cite{mitzenmacher2017probability} provides the bound
\[
\bbP \left( \SN{\hat \bbP(Y \in \CR) N - \bbP(Y \in \CR) N}\geq \frac{1}{2}\bbP(Y \in \CR) N\right) \leq 2\exp\left(-\frac{\bbP(Y \in \CR)N}{12} \right),
\]
and the result follows division by $N$, and  $N > 12\bbP(Y \in \CR)^{-1}u$.
\end{proof}

\subsection{Differences of projections}
\label{subsec:app_mim_index_space_estimation}
We gather two auxiliary results to rewrite the norm of differences of projections.

\begin{lemma}
\label{lem:projection_equality}
Let $A$ and $B$ be subspaces with $\dim(A) = \dim(B)$, and let
$P_A$ and $P_B$ the corresponding orthogonal projections. For $P_{A^\perp} = \Id - P_A$ we get
$\N{P_{A} - P_{B}} = \N{P_{A^\perp}P_{B}}.
$
\end{lemma}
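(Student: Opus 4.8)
The plan is to decompose $P_A-P_B$ into two pieces with orthogonal ranges, reduce the spectral norm to a maximum of two ``cross-projection'' norms, and then show those two norms are equal --- this last step being the only place the hypothesis $\dim(A)=\dim(B)$ is used.

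\emph{Step 1 (decomposition).} Write $P_A-P_B = P_A(\Id-P_B)-(\Id-P_A)P_B = P_A P_{B^\perp} - P_{A^\perp}P_B =: T_1+T_2$, where $P_{B^\perp}:=\Id-P_B$. Then $\Im(T_1)\subseteq A$ and $\Im(T_2)\subseteq A^\perp$ are orthogonal, while $\Im(T_1^*)=\Im(P_{B^\perp}P_A)\subseteq B^\perp$ and $\Im(T_2^*)=\Im(P_B P_{A^\perp})\subseteq B$ are orthogonal. Using $P_{B^\perp}P_B=0$ and $P_{A^\perp}P_A=0$ one gets $T_1 T_2^*=T_2^*T_1=T_2T_1^*=0$, hence $(P_A-P_B)(P_A-P_B)^* = T_1 T_1^* + T_2 T_2^*$ is a sum of positive semidefinite operators with mutually orthogonal ranges $\Im(T_1)\subseteq A$, $\Im(T_2)\subseteq A^\perp$. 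It is therefore block diagonal for the splitting $A\oplus A^\perp$, so $\N{P_A-P_B}^2 = \N{T_1 T_1^*}\vee\N{T_2 T_2^*} = \N{T_1}^2\vee\N{T_2}^2$. Thus $\N{P_A-P_B}=\max\{\N{P_A P_{B^\perp}},\,\N{P_{A^\perp}P_B}\}$, and it remains to prove $\N{P_A P_{B^\perp}}=\N{P_{A^\perp}P_B}$, since then the maximum equals $\N{P_{A^\perp}P_B}$.

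\emph{Step 2 (reduction to an eigenvalue identity).} Since $\N{P_A P_{B^\perp}}=\N{(P_A P_{B^\perp})^*}=\N{P_{B^\perp}P_A}$, it suffices to show $\N{P_{B^\perp}P_A}=\N{P_{A^\perp}P_B}$. Squaring, $\N{P_{B^\perp}P_A}^2=\N{P_A P_{B^\perp}P_A}=\N{P_A-G}$ with $G:=P_A P_B P_A$, and likewise $\N{P_{A^\perp}P_B}^2=\N{P_B-H}$ with $H:=P_B P_A P_B$. Both $G$ and $H$ are self-adjoint with $0\le G\le P_A$, $0\le H\le P_B$; moreover $G$ maps $A$ into $A$ and annihilates $A^\perp$ (symmetrically for $H$ and $B$). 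Excluding the trivial case $A=\{0\}$ (which by $\dim A=\dim B$ forces $B=\{0\}$ and both sides to be $0$), the compressions $G|_A$, $H|_B$ are positive semidefinite operators on $d$-dimensional spaces with spectra in $[0,1]$, so $\N{P_A-G}=1-\lambda_{\min}(G|_A)$ and $\N{P_B-H}=1-\lambda_{\min}(H|_B)$. Everything thus reduces to $\lambda_{\min}(G|_A)=\lambda_{\min}(H|_B)$.

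\emph{Step 3 (the eigenvalue identity).} Put $M:=P_A P_B$; then $G=MM^*$ and $H=M^*M$, so $G$ and $H$ have the same nonzero eigenvalues with the same multiplicities, and hence so do $G|_A$ and $H|_B$ (as $G,H$ vanish on $A^\perp,B^\perp$). For the multiplicity of the eigenvalue $0$: for $x\in A$ one has $\langle Gx,x\rangle=\N{P_B x}^2$, so $\ker(G|_A)=A\cap B^\perp$, giving $\dim\ker(G|_A)=d-\dim(P_B A)$ by rank--nullity for $P_B|_A$; symmetrically $\dim\ker(H|_B)=d-\dim(P_A B)$. Finally $\dim(P_B A)=\rank(P_B P_A)=\rank((P_A P_B)^*)=\rank(P_A P_B)=\dim(P_A B)$, so these multiplicities agree. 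Two positive semidefinite operators on $d$-dimensional spaces with identical spectra counted with multiplicity have the same $\lambda_{\min}$, so $\lambda_{\min}(G|_A)=\lambda_{\min}(H|_B)$, completing the proof. The main obstacle is precisely Step 3: it is \emph{not} enough that $G$ and $H$ share the same nonzero spectrum, because $\lambda_{\min}$ may well be the eigenvalue $0$; one genuinely needs the multiplicities of $0$ to match, and that is where $\dim(A)=\dim(B)$ is used, through $\dim(P_A B)=\dim(P_B A)$.
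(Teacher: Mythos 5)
Your proof is correct, and it takes a genuinely different route from the paper's. The paper invokes Theorem 6.34 of Chapter 1 in Kato's \emph{Perturbation Theory for Linear Operators} to conclude $\Vert(\Id-P_A)P_B\Vert=\Vert(\Id-P_B)P_A\Vert=\Vert P_A-P_B\Vert$ when $\Vert P_{A^\perp}P_B\Vert<1$ (using $\dim(A)=\dim(B)$ only to exclude the theorem's second alternative), and then disposes of the boundary case $\Vert P_{A^\perp}P_B\Vert=1$ by exhibiting a unit vector fixed by both $P_B$ and $P_{A^\perp}$. Your argument is instead entirely self-contained: the orthogonal splitting $P_A-P_B=P_AP_{B^\perp}-P_{A^\perp}P_B$ gives $\Vert P_A-P_B\Vert=\max\{\Vert P_AP_{B^\perp}\Vert,\Vert P_{A^\perp}P_B\Vert\}$, and the symmetry of the two cross terms is reduced, via $G=MM^*$ versus $H=M^*M$ for $M=P_AP_B$, to matching the multiplicity of the eigenvalue $0$ of the two compressions --- which is exactly where $\dim(A)=\dim(B)$ enters. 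You are right that this zero-multiplicity bookkeeping is the genuinely delicate point (the equality of nonzero spectra alone does not control $\lambda_{\min}$), and your treatment of it via $\dim(P_BA)=\rank(P_BP_A)=\rank(P_AP_B)=\dim(P_AB)$ is correct; the handling of the degenerate case $A=\{0\}$ and of the possibility $\Vert P_{A^\perp}P_B\Vert=1$ (which your spectral argument absorbs automatically, since $\lambda_{\min}=0$ is allowed) is also sound. What your approach buys is independence from the perturbation-theory reference and a uniform treatment of all cases; what the paper's buys is brevity at the cost of an external citation and a case split.
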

\begin{proof}
Assume $\N{\left(\Id - P_{A}\right)P_{B}} = \N{P_{A^\perp}P_{B}} < 1$ first. Then
the first case of Theorem 6.34 in Chapter 1 in \cite{kato2013perturbation} applies.
Note that the second case can be ruled out since $P_{A}$ can not map $\Range{P_B}$
one-to-one onto a proper subspace of $V \subset \Range{P_A}$
because $\dim(V) < \dim(\Range{P_A}) = \dim(\Range{P_B})$ according to the assumption.
Thus, in the first case it follows that
\[
\N{\left(\Id - P_{A}\right)P_{B}} = \N{\left(\Id - P_{B}\right)P_{A}} = \N{P_{A} - P_{B}}.
\]
Now let $\N{P_{A^\perp}P_{B}} = 1$. Then there exists $v \in \bbS^{D}$ such that
$\N{P_{A^\perp}P_{B}v} = \N{v}$. Since
\[
\N{v} \geq \N{P_{B}v} \geq \N{P_{A^\perp}P_{B}v} = \N{v},
\]
it follows that $\N{P_{B}v} = \N{v}$, and thus $P_Bv = v$ because $P_B$ is a
projection. With the same argument we deduce also $P_{A^\perp}v = v$, and then
\[
\left(P_{A} - P_{B}\right)v = P_{A}v - P_{B}v = 0 - v = v
\]
implies $\N{P_{A} - P_{B}} = 1 = \N{P_{A^\perp}P_{B}}$.
\end{proof}

\begin{lemma}
\label{lem:projection_equality_frobenius}
Let $A$ and $B$ be subspaces with $m = \dim(A) = \dim(B)$, and let
$P_A$ and $P_B$ the corresponding orthogonal projections. For $P_{A^\perp} = \Id - P_A$ we get
$\N{P_{A} - P_{B}}_F = \sqrt{2}\N{P_{A^\perp}P_{B}}_F$.
\end{lemma}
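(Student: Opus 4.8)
The plan is to reduce both sides to traces and exploit that an orthogonal projection is symmetric and idempotent; unlike the spectral-norm identity in Lemma~\ref{lem:projection_equality}, this case is purely algebraic and needs no perturbation theory. Since $(P_A - P_B)^\top = P_A - P_B$, I would first write
$\N{P_A - P_B}_F^2 = \Trace\big((P_A - P_B)^2\big) = \Trace\big(P_A^2 - P_AP_B - P_BP_A + P_B^2\big)$,
and then use $P_A^2 = P_A$, $P_B^2 = P_B$, the cyclicity $\Trace(P_BP_A) = \Trace(P_AP_B)$, and $\Trace(P_A) = \Trace(P_B) = m$ (this is the only place the hypothesis $\dim A = \dim B$ is used) to obtain
\[
\N{P_A - P_B}_F^2 = 2m - 2\Trace(P_AP_B).
\]

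Next I would handle the right-hand side. With $M := P_{A^\perp}P_B$ one has $M^\top M = P_B P_{A^\perp}^\top P_{A^\perp} P_B = P_B P_{A^\perp} P_B$, using symmetry and idempotence of $P_{A^\perp}$, so by cyclicity and $P_B^2 = P_B$,
\[
\N{P_{A^\perp}P_B}_F^2 = \Trace(P_B P_{A^\perp} P_B) = \Trace(P_{A^\perp}P_B) = \Trace\big((\Id - P_A)P_B\big) = m - \Trace(P_AP_B).
\]
Comparing the two displays gives $\N{P_A - P_B}_F^2 = 2\,\N{P_{A^\perp}P_B}_F^2$, and taking square roots yields the claim.

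I do not expect any genuine obstacle: the argument is a two-line trace computation, and the slightly delicate point is only bookkeeping — noticing that the factor $\sqrt 2$ hinges precisely on $\dim A = \dim B$ (without it one merely gets $\N{P_A - P_B}_F^2 = \dim A + \dim B - 2\Trace(P_AP_B)$). As a consistency check I would note that the identity also drops out of the CS decomposition: if $\theta_1,\dots,\theta_m$ are the principal angles between $A$ and $B$, the singular values of $P_{A^\perp}P_B$ are $\sin\theta_1,\dots,\sin\theta_m$, while the nonzero eigenvalues of the symmetric matrix $P_A - P_B$ are $\pm\sin\theta_1,\dots,\pm\sin\theta_m$, so both sides equal $2\sum_{i=1}^m \sin^2\theta_i$.
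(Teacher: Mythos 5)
Your proof is correct, and it is a bit more self-contained than the paper's. The paper reaches the same two trace identities but gets the first one by citing the principal-angle formula $\tfrac12\N{P_A-P_B}_F^2 = m-\sum_{i=1}^m\cos^2\theta_i$ from the Grassmann-geometry literature and then converting $\sum_i\cos^2\theta_i=\N{A^\top B}_F^2=\Trace(P_AP_B)$ via an SVD of $A^\top B$; from there it performs exactly your manipulation $m-\Trace(P_AP_B)=m-\Trace(P_B)+\Trace(P_{A^\perp}P_B)$ and uses $\Trace(P_{A^\perp}P_B)=\N{P_{A^\perp}P_B}_F^2$. You instead obtain $\N{P_A-P_B}_F^2=2m-2\Trace(P_AP_B)$ by directly expanding $\Trace\left((P_A-P_B)^2\right)$ and using symmetry and idempotence, which removes the external citation and makes the role of the hypothesis $\dim A=\dim B$ transparent (your remark that without it one gets $\dim A+\dim B-2\Trace(P_AP_B)$ is exactly right). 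The CS-decomposition sanity check you add is the content of the identity the paper cites, so the two arguments are ultimately two presentations of the same computation; yours is the more elementary write-up, the paper's makes the link to principal angles explicit, which it reuses elsewhere (e.g.\ in Lemma \ref{lem:close_function}).
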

\begin{proof}
With slight abuse of notation we denote $A, B\in \bbR^{D\times m}$ two orthonormal bases
of $A$ respectively $B$ such that $P_A = AA^\top$, $P_B = BB^\top$. Now, denote
$A^\top B = U(cos(\theta))V^\top$ where $\cos(\theta) \in \bbR^{m\times m}$ is the diagonal matrix
containing the principal angles $\theta_i$ \cite{hamm2008grassmann}. From \cite{hamm2008grassmann}
we obtain the identity $1/2 \N{P_A - P_B}_F^2 = m  - \sum_{i=1}^{m}\cos^2(\theta_i)$. Doing some
further manipulations we get
\begin{align*}
\frac{1}{2} \N{P_A - P_B}_F^2 &= m  - \sum_{i=1}^{m}\cos^2(\theta_i) = m - \N{A^\top B}_F^2
\\
&= m - \Trace(B^\top A A^\top B) = m - \Trace(A A^\top B B^\top ) \\
&= m - \Trace(P_A P_B) = m - \Trace((\Id-P_{A^\perp}) P_B) \\
&= m - \Trace(P_B) + \Trace(P_{A^\perp}P_B).
\end{align*}
The result follows by $\Trace(P_B) = \dim(B) = m$ and $\Trace(P_{A^\perp}P_B) = \N{P_{A^{\perp}}P_B}_F^2$.
\end{proof}

\subsection{Proof of Theorem \ref{thm:regression_error_kNN}}
\label{subsec:app_kNN_mim_proof}
\begin{proof}[Proof of Theorem \ref{thm:regression_error_kNN}]
Denote $S_X = \{X_i : i \in [N]\}$ and
$\tilde f_k(x) = \bbE[\hat f_k(x) |S_X] = \sum_{i=1}^{k} f(X_{i(x)})$ for fixed $x$.
We first decompose (randomness is in the $\zeta_{i}$'s)
\begin{align*}
\bbE\left[\left(\hat f_k(x) - f(x)\right)^2 \Big|S_X\right]= \bbE\left[\left(\hat f_k(x) - \tilde f_k(x)\right)^2 \Big| S_X\right] +
\left(\tilde f_k(x) - f(x)\right)^2,
\end{align*}
and then use the towering property of conditional expectations to obtain
\begin{align*}
\bbE \left(\hat f_k(X) - f(X)\right)^2 &= \bbE \bbE\left[ \left(\hat f_k(X) - f(X)\right)^2\Big|\CS_X\right]\\
&=\bbE\bbE \left[\left(\hat f_k(X) - \tilde f_k(X)\right)^2\Big|\CS_X\right]
+ \bbE\left(\tilde f_k(X) - f(X)\right)^2 \\
&=\bbE \left(\hat f_k(X) - \tilde f_k(X)\right)^2 + \bbE\left(\tilde f_k(X) - f(X)\right)^2
\end{align*}
Since $\bbE \zeta_{i} = 0$, $\zeta_{i}\independent \zeta_j$ and $\textrm{Var}(\zeta_i| X = x) \leq \sigma_{\zeta}^2$, the first term
satisfies the bound
\begin{align*}
\bbE\left(\hat f_k(X) - \tilde f_k(X)\right)^2
 % &=\bbE\left(\frac{1}{k}\sum\limits_{i=1}^{k}\zeta_{i(X)}\right)^2
= \textrm{Var}\left(\frac{1}{k}\sum\limits_{i=1}^{k}\zeta_{i(X)}\right)
\leq \frac{\sigma_{\zeta}^2}{k} = C_k^{-1} \sigma_{\zeta}^2 N^{-\frac{2s}{2s+d}}.
\end{align*}
For $\bbE\left(\tilde f_k(X) - f(X)\right)^2$, we recall that $f(X) = g(A^\top X)$ for some $(L,s)$-smooth $g$, which implies
\begin{align*}
\bbE \left(\tilde f_k(X) - f(X)\right)^2 &= \bbE\left(\frac{1}{k}\sum\limits_{i=1}^{k}g(A^\top X_{i(X)}) - g(A^\top X)\right)^2
\\
&\leq 4 L^2 \bbE\left(\frac{1}{k}\sum\limits_{i=1}^{k}\min\left\{\N{A^\top \left(X_{i(X)} - X\right)}^{s}, 1\right\}\right)^2,
\end{align*}
where the $4$ can be injected since $\SN{f(X)}\leq 1$ almost surely.
To bound this term further we have to replace $\{X_{i(X)} : i \in [k]\}$ (the $k$ closest samples wrt to
$\hat d(\cdot,X) := \Vert \MIMP\left(\cdot - X\right)\Vert$) by the $k$ closest samples based on $d(\cdot, X) := \Vert \MIMPTrue\left(\cdot - X\right)\Vert$.
So let $\tilde X_{i(X)}$ denote the $i$-th closest sample to $X$ based on $d$, and let further $\delta := \N{\MIMP - \MIMPTrue}$. Since
\begin{align*}
\SN{d(X, X') - \hat d(X, X')}
\leq \N{(\MIMPTrue - \MIMP)(X - X')} \leq \delta \N{X - X'},
\end{align*}
and $(a+b)^s \leq a^s + b^s$ for $s \leq 1$, we can bound
\begin{align*}
\sum\limits_{i=1}^{k}\min\{d(X_{i(X)}, X)^{s}, 1\}
&\leq \sum\limits_{i=1}^{k}\min\{\hat d(X_{i(X)}, X)^{s} + \delta^s\max_{i \in [N]}\N{X_i-X}^s, 1\}\\
&\leq \sum\limits_{i=1}^{k}\min\{\hat d(\tilde X_{i(X)}, X)^{s} + \delta^s\max_{i \in [N]}\N{X_i-X}^s, 1\}\\
&\leq \sum\limits_{i=1}^{k}\min\left\{d(\tilde X_{i(X)}, X)^{s} + 2\delta^s \max_{i \in [N]}\N{X_i-X}^s, 1\right\},
\end{align*}
where in the second inequality we used that $X_{i(X)}$ minimizes the distance to $X$ measured in $\hat d$, and can therefore
be replaced by $\tilde X_{i(X)}$. Denoting $\Delta_{X,N} := 2\delta^s\max_{i \in [N]}\N{X_i-X}^s$
and using $(\sum_{i=1}^{k}b_i)^2 \leq k \sum_{i=1}^{k}b_i^2$ for arbitrary $b_i$'s, we get
\begin{equation}
\begin{aligned}
\label{eq:aux_kNN_bound_1}
\bbE\left(\hat f_k(X) - f(X)\right)^2
&\leq 4 L^2 \bbE \left(\frac{1}{k}\sum\limits_{i=1}^{k}\min\{d(\tilde X_{i(X)}, X)^s + \Delta_{X,N}, 1\}\right)^2\\
&\leq \frac{4 L^2 }{k}\sum\limits_{i=1}^{k}\bbE \min\{(d(\tilde X_{i(X)}, X)^{s} + \Delta_{X,N})^2, 1\}\\
&\leq \frac{8 L^2 }{k}\sum\limits_{i=1}^{k}\bbE\min\{d(\tilde X_{i(X)}, X)^{2s}, 1\} +
\frac{8 L^2 }{k}\sum\limits_{i=1}^{k}\bbE\Delta_{X,N}^2.
\end{aligned}
\end{equation}
For the first term, we proceed as in \cite{gyorfi2006distribution,kohler2006rates} by randomly splitting the data set
$\{X_i : i \in [N]\}$ into $k + 1$ sets, where the first $k$ sets contain $\lfloor N/k\rfloor$ samples. Then we let
$X^*_{i(X)}$ denote the nearest neighbor to $X$ (measured in $d$)  within the $i$-th set.
Since $\{\tilde X_{i(X)} : i \in [k]\}$ are by definition the closest $k$ samples (measured in $d$), we can
bound
\begin{align*}
\frac{8L^2 }{k}\sum\limits_{i=1}^{k}\bbE\min\{d(\tilde X_{i(X)}, X)^{2s}, 1\} &\leq
\frac{8L^2 }{k}\sum\limits_{i=1}^{k}\bbE\min\{d(X^*_{i(X)}, X)^{2s}, 1\}\\
&=8 L^2\bbE\min\left\{\N{\MIMPTrue\left(X^*_{1(X)} - X\right)}^{2s}, 1\right\},
\end{align*}
where the last equality uses that the distribution of $X^*_{i(X)} - X$ is independent of the set index $i$.
Since $\N{\MIMPTrue(\cdot)} = \N{A^\top (\cdot)}$, $d > 2s$ by assumption, and
\[
\bbE\N{A^\top X}^{\beta} = \bbE \N{X}^{\beta} \lesssim \N{X}_{\psi_2}^{\beta}\beta^{\beta/2}
\]
for any $\beta \geq 1$ by the sub-Gaussianity of $X$ (see \cite[Proposition 2.5.2]{vershynin2018high}),
Lemma 1 in \cite{kohler2006rates} implies the existence of a constant $C_1 = C_1(d,s,\N{X}_{\psi_2})$
satisfying
\begin{align*}
\bbE\min\left\{\N{\MIMPTrue\left(X^*_{1(X)} - X\right)}^{2s}, 1\right\} &= \bbE\min\left\{\N{A^\top\left(X^*_{1(X)} - X\right)}^{2s}, 1\right\} \\
&\leq C_1 \left(\frac{k}{N}\right)^{\frac{2s}{d}} = C_1 C_k^{\frac{2s}{d}} N^{-\frac{2s}{2s+d}}.
\end{align*}
It remains to bound the last term in \eqref{eq:aux_kNN_bound_1}. Denote for short $\sigma_X = \N{X}_{\psi_2}$. We first compute that
\begin{align*}
\bbE\Delta_{X,N}^2 &= \int_{0}^{\infty}\bbP\left(\Delta_{X,N}^2 > u\right)du =\int_{0}^{\infty}\bbP\left(\max_{i=1,\ldots,N}\N{X_i - X}^{2s} > \frac{u}{4\delta^{2s}}\right)du.
\end{align*}
We can control this probability by using the sub-Gaussianity of $X$. More precisely,
since $X$ is sub-Gaussian, $X_i - X$ is sub-Gaussian (norm changing only by a universal constant),
and Lemma \ref{lem:norm_concentration} implies that $\Vert \N{X_i - X}\Vert_{\psi_2} \leq C\sqrt{D}\sigma_X$. Taking the square, and using
\cite[Lemma 2.7.6]{vershynin2018high}, we obtain
\[
\Vert \N{X_i -X}^2\Vert_{\psi_1} \leq \Vert \N{X_i -X}\Vert_{\psi_2}^2 \leq  C D \sigma_X^2.
\]
To bound the integral, we first split $[0,\infty]$ into $[0, \nu D\sigma_X^2\log(N)\delta^{2s}]$
and $[\nu D\sigma_X^2\log(N)\delta^{2s}, \infty]$ for some $\nu > 4\max\{\frac{1}{D\sigma_X^2\log(N)}, C\}$, which yields
\begin{align*}
\bbE\Delta_{X,N}^2 &\leq \int_{\nu D\sigma_X^2\log(N)\delta^{2s}}^{\infty}\bbP\left(\max_{i \in [N]}\N{X_i - X}^{2s} > \frac{u}{4\delta^{2s}}\right)du
+ \nu D\sigma_X^2\log(N)\delta^{2s}.
\end{align*}
For the first term we realize that $u > \nu D\sigma_X^2\log(N)\delta^{2s} > 4\delta^{2s}$ implies
\begin{align*}
&\bbP\left(\max_{i=1,\ldots,N}\N{X_i - X}^{2s} > \frac{u}{4\delta^{2s}}\right) \leq
\bbP\left(\max_{i=1,\ldots,N}\N{X_i - X}^{2} > \frac{u}{4\delta^{2s}}\right).
\end{align*}
Then the sub-Exponentiality of $\Vert X-X_i\Vert^2$ and a union bound argument over $i \in [N]$ give
\begin{align*}
\bbE\Delta_{X,N}^2 &\leq \int_{\nu D\sigma_X^2\log(N)\delta^{2s}}^{\infty}\bbP\left(\max_{i \in [N]}\N{X_i - X}^{2} > \frac{u}{4\delta^{2s}}\right)du
+ \nu D\sigma_X^2\log(N)\delta^{2s}\\
&\leq  2N \int_{\nu D\sigma_X^2\log(N)\delta^{2s}}^{\infty}\exp\left(-\frac{u}{CD\sigma_X^2\delta^{2s}}\right)du
+ \nu 2D\sigma_X^2\log(N)\delta^{2s}\\
&\leq 2CD\sigma_X^2\delta^{2s} N \exp\left(-\frac{\nu \log(N)}{C}\right) + \nu D\sigma_X^2\log(N)\delta^{2s}\\
&\leq 2CD\sigma_X^2\delta^{2s} N \exp\left(-\log(N)\right) + \nu D\sigma_X^2\log(N)\delta^{2s}\\
&\leq 2(\nu \vee C)  D \sigma_X^2\log(N) \delta^{2s}. \qedhere
\end{align*}
\end{proof}

\subsection{Proof of Theorem \ref{thm:regression_error_piecewise_pols}}
\label{subsec:mim_pp_proof}
\paragraph*{Interlude: Smoothness of linear concatenations}
In this section we establish smoothness properties of linear concatenations
with explicit bounds for corresponding Lipschitz constants.
\begin{lemma}
\label{lem:derivative_formula}
Let $\phi : \bbR^{d}\rightarrow \bbR$, $W \in \bbR^{d \times d}$ and $\psi(z) = \phi(Wz)$.
Let $s \in \bbN$ and $\alpha \in \bbN^{d}$ be a multi-index with $\sum_{i=1}^{d} \alpha_i = k \leq s$. If $\phi \in \CC^{s}(\bbR^{d})$, i.e.
all partial derivatives $\partial^{\alpha}\phi$ exist and are continuous, then also all $\partial^{\alpha}\psi$ exist and are
continuous. Moreover, if $i :[k] \rightarrow [d]$ is an arbitrary derivative ordering satisfying $\alpha = \sum_{w=1}^{k}e_{i(w)}$, we can express for any $k \in [s]$
\begin{align}
\label{eq:derivative_formula}
\partial^{\alpha}\psi(z) = \sum_{i_1 = 1}^{d}\cdots \sum_{i_k=1}^{d}\left(\prod\limits_{w = 1}^{k} W_{i_{w}, i(w)}\right) \partial_{i_1}\cdots\partial_{i_k}(\phi)(Wz).
\end{align}
\end{lemma}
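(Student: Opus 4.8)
The plan is to establish both assertions --- existence and continuity of $\partial^{\alpha}\psi$, and the representation \eqref{eq:derivative_formula} valid for \emph{every} admissible ordering $i$ --- simultaneously, by induction on $k=\sum_{i}\alpha_i$. First I would record that, since $\phi\in\CC^{s}$, Schwarz's theorem makes all mixed partial derivatives of $\phi$ of order at most $s$ symmetric in their indices; hence the symbol $\partial_{i_1}\cdots\partial_{i_k}\phi$ on the right-hand side of \eqref{eq:derivative_formula} is unambiguous, and that right-hand side depends only on $\alpha$ and not on the chosen ordering. This is what makes the stated conclusion internally consistent and is worth isolating at the outset.

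For the base case $k=1$ one has $\alpha=e_{i(1)}$, and the ordinary chain rule applied to the linear map $z\mapsto Wz$ gives $\partial_{i(1)}\psi(z)=\sum_{i_1=1}^{d}W_{i_1,i(1)}\,\partial_{i_1}\phi(Wz)$, which is exactly \eqref{eq:derivative_formula} for $k=1$; continuity is immediate since each $\partial_{i_1}\phi$ is continuous and $z\mapsto Wz$ is continuous. For the inductive step I would fix $\alpha$ with $|\alpha|=k\le s$ and an ordering $i$ with $\alpha=\sum_{w=1}^{k}e_{i(w)}$, set $\alpha':=\alpha-e_{i(k)}$ (so $|\alpha'|=k-1$ and $\alpha'=\sum_{w=1}^{k-1}e_{i(w)}$), and apply the induction hypothesis to get that $\partial^{\alpha'}\psi$ exists, is continuous, and satisfies \eqref{eq:derivative_formula} with $k$ replaced by $k-1$. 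I would then differentiate this identity once more in the $i(k)$-th variable: the coefficients $\prod_{w=1}^{k-1}W_{i_w,i(w)}$ are constants, and each function $z\mapsto(\partial_{i_1}\cdots\partial_{i_{k-1}}\phi)(Wz)$ is $\CC^{1}$ (this is where $k\le s$ enters: $\partial_{i_1}\cdots\partial_{i_{k-1}}\phi$ still possesses a continuous first derivative), so the chain rule yields
\[
\partial_{i(k)}\bigl[(\partial_{i_1}\cdots\partial_{i_{k-1}}\phi)(Wz)\bigr]=\sum_{i_k=1}^{d}W_{i_k,i(k)}\,\partial_{i_k}\partial_{i_1}\cdots\partial_{i_{k-1}}\phi(Wz).
\]
Summing over $i_1,\dots,i_{k-1}$ shows that $\partial^{\alpha}\psi=\partial_{i(k)}\partial^{\alpha'}\psi$ exists, is continuous (a finite sum of constants times continuous functions), and, after reordering the partials of $\phi$ via Schwarz's theorem, equals the right-hand side of \eqref{eq:derivative_formula}.

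The argument is essentially bookkeeping; the only points requiring care are (i) checking at each stage that the chain rule is applicable because the relevant iterated partial of $\phi$ composed with $W$ is $\CC^{1}$, which is precisely the content of $\phi\in\CC^{s}$ together with $k\le s$, and (ii) identifying the specific iterated partial of $\phi$ obtained by peeling off one derivative at a time with the order-independent object $\partial_{i_1}\cdots\partial_{i_k}\phi$ written in \eqref{eq:derivative_formula}, which is legitimate by commutativity of mixed partials up to order $s$. I do not anticipate a genuine obstacle beyond carefully tracking the summation indices.
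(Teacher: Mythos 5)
Your proposal is correct and follows essentially the same route as the paper: induction on $k$, with the chain rule for the base case, peeling off the last derivative $\partial_{i(k)}$ in the inductive step, and invoking Schwarz's theorem to reorder the mixed partials of $\phi$. The extra care you take in noting that the right-hand side is ordering-independent and that $k\le s$ guarantees the $\CC^1$-regularity needed for each application of the chain rule is a welcome refinement of the paper's more terse justification, but not a different argument.
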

\begin{example}
Let $\alpha = e_i + e_j$ and $i(1) = i$,  $i(2) = j$. Then the formula yields the derivative
\begin{align*}
\partial^{\alpha}\psi(z) = \sum_{i_1=1}^{d}\sum\limits_{i_2=1}^{d}W_{i_1,i}W_{i_2,j} \partial_{i_1}\partial_{i_2}(\phi) (Wz).
\end{align*}
\end{example}
\begin{proof}
$\psi$ is a concatenation of a $\CC^{s}$ function with a linear transformation and is therefore as smooth as $\phi$. For the formula, we
use induction over $k$. Let $\alpha$ be a multi-index with $\sum_{i=1}^{d} \alpha_i = 1$, i.e. $\alpha$ is equal to a
standard basis vector $e_i$ for some $i \in [d]$. Since $\nabla \psi(z) = W^T \nabla \phi(Wz)$ we have
\begin{align*}
\partial_i \psi(z) =\left\langle W_i, \nabla \phi(Wz)\right\rangle  = \sum_{i_1=1}^{d}W_{i_1,i}\partial_{i_1}(\phi)(Wz).
\end{align*}
For the induction step $k-1\rightarrow k$, we let $\alpha$ be a multi-index with $\sum_{i=1}^{d} \alpha_i = k$ and
we calculate $ \partial^{\alpha}\psi(z) = \partial_{i(k)}\partial^{\alpha-e_{i(k)}}\psi(z) $.
Since $\alpha-e_{i(k)}$ is a multi-index whose entries sum to $k-1$,
by induction hypothesis we have
\begin{align*}
\partial^{\alpha}\psi(z) &=
\partial_{i(k)}\left( \sum_{i_1 = 1}^{d}\ldots \sum_{i_{k-1}=1}^{d}\left(\prod\limits_{w = 1}^{k-1} W_{i_{w}, i(w)}\right) \partial_{i_1}\cdots\partial_{i_{k-1}}(\phi)(Wz)\right)\\
&= \sum_{i_1 = 1}^{d}\cdots \sum_{i_{k-1}=1}^{d}\left(\prod\limits_{w = 1}^{k-1} W_{i_{w}, i(w)}\right) \partial_{i(k)}\left(\partial_{i_1}\cdots\partial_{i_{k-1}}(\phi)(Wz)\right)\\
&= \sum_{i_1 = 1}^{d}\cdots \sum_{i_{k-1}=1}^{d}\left(\prod\limits_{w = 1}^{k-1} W_{i_{w}, i(w)}\right) \sum_{i_k=1}^{d} W_{i_k, i(k)}\partial_{i_1}\cdots\partial_{i_{k}}(\phi)(Wz)\\
&= \sum_{i_1 = 1}^{d}\cdots \sum_{i_{k-1}=1}^{d}\sum_{i_k=1}^{d} \left(\prod\limits_{w = 1}^{k-1} W_{i_{w}, i(w)}\right)W_{i_k, i(k)}  \partial_{i_1}\cdots\partial_{i_{k}}(\phi)(Wz),
\end{align*}
where we used Schwartz Lemma in the second to last equality
The result follows by extending the product.
\end{proof}
\begin{lemma}
\label{lem:smooth_transformed_fun}
Let $\phi : \bbR^d \rightarrow \bbR$, $s_1 \in \bbN_{0}$, $0 < s_2 \leq 1$ and $s=s_1+s_2$. Assume $\phi$ is $(L,s)$-smooth, $W \in \bbR^{d\times d}$, and define $\psi(z) = \phi(Wz)$ for some $W \in \bbR^{d\times d}$.
Then $\psi$ is $(Ld^{\frac{s_1}{2}}\N{W}^{s}, s)$-smooth.
\end{lemma}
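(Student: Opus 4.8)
The plan is to read off the top-order partial derivatives of $\psi$ from the chain-rule formula \eqref{eq:derivative_formula} and then estimate their Hölder constant directly. First, since $\phi$ is $(L,s)$-smooth with $s = s_1 + s_2$, all partial derivatives of order at most $s_1$ exist, those of order $s_1$ are $s_2$-Hölder (hence continuous), and the lower-order ones are differentiable (hence continuous); thus $\phi \in \CC^{s_1}(\bbR^d)$. If $s_1 = 0$ the claim is immediate, since $\SN{\psi(z) - \psi(z')} = \SN{\phi(Wz) - \phi(Wz')} \leq L\N{W(z-z')}^{s} \leq L\N{W}^{s}\N{z-z'}^{s}$, and $d^{s_1/2} = 1$. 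So assume $s_1 \ge 1$; then Lemma \ref{lem:derivative_formula} (applied with $s$ there equal to $s_1$) shows $\psi$ has continuous partial derivatives up to order $s_1$, given by \eqref{eq:derivative_formula}.

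Next, fix a multi-index $\alpha$ with $\sum_i \alpha_i = s_1$ and an ordering $i : [s_1] \to [d]$ with $\alpha = \sum_{w=1}^{s_1} e_{i(w)}$. Subtracting \eqref{eq:derivative_formula} at $z$ and at $z'$, each factor $\partial_{i_1}\cdots\partial_{i_{s_1}}\phi$ is an order-$s_1$ partial derivative of $\phi$, so $(L,s)$-smoothness gives
\[
\SN{\partial_{i_1}\cdots\partial_{i_{s_1}}(\phi)(Wz) - \partial_{i_1}\cdots\partial_{i_{s_1}}(\phi)(Wz')} \leq L\N{W(z-z')}^{s_2} \leq L\N{W}^{s_2}\N{z-z'}^{s_2}.
\]
Applying the triangle inequality to the difference of \eqref{eq:derivative_formula}'s, we obtain
\[
\SN{\partial^{\alpha}\psi(z) - \partial^{\alpha}\psi(z')} \leq L\N{W}^{s_2}\N{z-z'}^{s_2}\sum_{i_1=1}^{d}\cdots\sum_{i_{s_1}=1}^{d}\prod_{w=1}^{s_1}\SN{W_{i_w, i(w)}}.
\]

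It remains to bound the combinatorial sum, which is the only place any genuine estimate is needed (and it is elementary). Since the summation indices are independent,
\[
\sum_{i_1,\ldots,i_{s_1}}\prod_{w=1}^{s_1}\SN{W_{i_w,i(w)}} = \prod_{w=1}^{s_1}\Big(\sum_{j=1}^{d}\SN{W_{j,\,i(w)}}\Big),
\]
a product of $\ell_1$-norms of columns of $W$. By Cauchy–Schwarz each column has $\ell_1$-norm at most $\sqrt d$ times its Euclidean norm $\N{We_{i(w)}}_2 \le \N{W}$, so the sum is at most $(\sqrt d\,\N{W})^{s_1} = d^{s_1/2}\N{W}^{s_1}$. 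Combining with the previous display yields
\[
\SN{\partial^{\alpha}\psi(z) - \partial^{\alpha}\psi(z')} \leq L\, d^{s_1/2}\N{W}^{s_1+s_2}\N{z-z'}^{s_2} = L\, d^{s_1/2}\N{W}^{s}\N{z-z'}^{s_2},
\]
which is precisely the definition of $(Ld^{s_1/2}\N{W}^s, s)$-smoothness. I do not anticipate a real obstacle here; the only point requiring a little care is verifying the $\CC^{s_1}$ hypothesis needed to invoke Lemma \ref{lem:derivative_formula}, which is dispatched in the first paragraph.
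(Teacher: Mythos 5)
Your proof is correct and follows essentially the same route as the paper's: apply the chain-rule formula of Lemma \ref{lem:derivative_formula}, use the H\"older continuity of the order-$s_1$ derivatives of $\phi$, and bound the combinatorial sum $\sum_{i_1,\ldots,i_{s_1}}\prod_w |W_{i_w,i(w)}|$ by $d^{s_1/2}\N{W}^{s_1}$ via column $\ell_1$-norms and Cauchy--Schwarz. Your explicit treatment of the $s_1=0$ case is a small but welcome bit of extra care that the paper leaves implicit.
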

\begin{proof}
Since $W$ is a linear transformation, $\psi$ has as many continuous partial derivatives as $\phi$.
Now consider $\alpha \in \bbN_0^{d}$ with $\sum_{i=1}^{d}\alpha_i = s_1$, and let
$i:[s_1]\rightarrow [d]$ be an arbitrary derivative ordering satisfying $\sum_{w=1}^{s_1}e_{i(w)} = \alpha$.
By using Lemma \ref{lem:derivative_formula}, we get
\begin{equation*}
\begin{aligned}
& \SN{\partial^{\alpha} \psi(z) - \partial^{\alpha} \psi(z')} \\
= &\SN{\sum_{i_1 = 1}^{d}\cdots \sum_{i_{s_1}=1}^{d}\left(\prod\limits_{w = 1}^{s_1}W_{i_{w}, i(w)}\right)\left(\partial_{i_1}\cdots\partial_{i_{s_1}}(\phi)(Wz)- \partial_{i_1}\cdots\partial_{i_{s_1}}(\phi)(Wz')\right)} \\
\leq & \max_{\alpha : \sum_{i=1}^{d}\alpha_i = s_1} \SN{\partial^{\alpha}\phi(Wz) - \partial^{\alpha}\phi(Wz')} \left(\sum_{i_1 = 1}^{d}\cdots \sum_{i_{s_1}=1}^{d}\prod\limits_{w = 1}^{s_1} \SN{W_{i_{w}, i(w)}}\right).
\end{aligned}
\end{equation*}
Furthermore denote $\N{W}_{1} = \max_{i}\sum_j|W_{i,j}| \leq \sqrt{d} \N{W}$. Then we can rewrite
\begin{align*}
\sum_{i_1 = 1}^{d}\cdots \sum_{i_{s_1}=1}^{d}\prod\limits_{w = 1}^{s_1} \SN{W_{i_{w}, i(w)}}
=\sum_{i_1 = 1}^{d} \SN{W_{i_1,i(1)}}\cdots \sum_{i_{s_1}=1}^{d} \SN{W_{i_{s_1}, i(s_1)}}
\leq \N{W}_{1}^{s_1} \leq d^{\frac{s_1}{2}}\N{W}^{s_1}.
\end{align*}
Combining this with the previous calculation, and the fact that $\phi$ is $(L,s)$-smooth, we get
\begin{equation*}
\SN{\partial^{\alpha} \psi(z) - \partial^{\alpha} \psi(z')} \leq L \N{Wz - Wz'}^{s_2} d^{\frac{s_1}{2}}\N{W}^{s_1}
\leq L d^{\frac{s_1}{2}}\N{W}^{s}\N{z - z'}^{s_2}. \qedhere
\end{equation*}
\end{proof}

\paragraph*{Bounding $\dim(\CF(\hat A, l,k,R))$}
\begin{lemma}
\label{lem:bound_number_of_cells}
We have $\SN{\Delta_l(R)} \leq \lceil(2^{l+1}R)^d\rceil$, and thus
$$ \dim(\CF(\hat A, l,k,R)) \leq {{d+k}\choose k}\lceil(2^{l+1}R)^d\rceil . $$
\end{lemma}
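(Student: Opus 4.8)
The plan is to reduce the statement to a $d$-dimensional counting problem and then read off the dimension bound directly from the definition \eqref{eq:function_space} of $\CF(\hat A,l,k,R)$.

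First I would unwind the set $\{\hat A^\top z : z\in B_R\}$. Because the columns of $\hat A$ are orthonormal, $\N{\hat A^\top z}\le\N{z}$ for every $z\in\bbR^D$, while conversely every $y\in\bbR^d$ with $\N{y}\le R$ equals $\hat A^\top(\hat A y)$ for the point $\hat A y\in B_R$; hence $\{\hat A^\top z : z\in B_R\}$ is exactly the Euclidean ball $\{y\in\bbR^d:\N{y}\le R\}$, which is contained in the cube $[-R,R]^d$. Consequently every cube of $\Delta_l(R)$ already belongs to the family of dyadic cubes of side $2^{-l}$ that meet $[-R,R]^d$, so it suffices to bound the size of the latter family.

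Next I would count those cubes. A cube of $\Delta_l$ is uniquely described by the integer vector $v\in\bbZ^d$ of its lower corner $2^{-l}v$, so I would count admissible $v$. Intersecting $\prod_j[2^{-l}v_j,2^{-l}(v_j+1)]$ with $[-R,R]^d$ is a coordinatewise condition, and in the $j$-th coordinate it forces $-2^lR-1\le v_j\le 2^lR$, confining $v_j$ to an interval of length $2^{l+1}R$; multiplying the per-coordinate counts over $j\in[d]$ yields $\SN{\Delta_l(R)}\le\lceil(2^{l+1}R)^d\rceil$. (Equivalently one may argue by volume: the cubes in $\Delta_l(R)$ are pairwise non-overlapping with volume $2^{-ld}$ each, and all lie inside the $2^{-l}$-enlargement of the radius-$R$ ball.)

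Finally, for the second assertion I would use that $\CF(\hat A,l,k,R)$ is a vector space — it is the span of products of indicator functions and polynomials, hence closed under linear combinations — and that it is generated by the functions $x\mapsto 1_{B_R}(x)\,1_c(\hat A^\top x)\,(\hat A^\top x)^\alpha$ indexed by $c\in\Delta_l(R)$ and $\alpha\in\bbN_0^d$ with $\sum_i\alpha_i\le k$. Since $\dim\CP_k=\binom{d+k}{k}$, there are $\SN{\Delta_l(R)}\binom{d+k}{k}$ such generators, so $\dim\CF(\hat A,l,k,R)\le\binom{d+k}{k}\SN{\Delta_l(R)}\le\binom{d+k}{k}\lceil(2^{l+1}R)^d\rceil$. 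The only delicate point is the boundary bookkeeping with the ceiling functions in the cube count; the reduction to $d$ dimensions and the dimension count are routine.
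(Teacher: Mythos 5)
Your argument is correct and follows essentially the same route as the paper: bound the image set via $\N{\hat A^\top z}\le\N{z}\le R$ so that it sits inside $[-R,R]^d$, count the dyadic cells meeting that cube, and multiply by $\dim\CP_k=\binom{d+k}{k}$. The boundary bookkeeping you flag (integers in an interval of length $L$ number up to $L+1$, so the product is not literally $\le\lceil L^d\rceil$) is present in the paper's own proof as well and is immaterial for how the lemma is used.
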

\begin{proof}
First we note that the number of cells with side length $2^{-l}$ required to cover $[-R,R]^d$ is given by $\lceil (2R 2^l)^d\rceil = \lceil(2^{l+1}R)^d\rceil$.
Furthermore, for any $w \in \{\hat A^\top z : z \in B_R(0)\}$, we have
$\N{w} = \N{\hat A^\top z} \leq \N{z} \leq R$, hence $w \in B_{R}(0)$ (in $\bbR^d$). Therefore a bound for $\SN{\Delta_l(R)}$
is given by a bound for the number of cells covering $[-R,R]^d$.
\end{proof}

\paragraph*{Bounding the approximation error}
We first show the existence of $g^*$ almost as regular as $g$
and satisfying $g^*(\hat A^\top x) \approx g(A^\top x)$. Then we bound the approximation
error between $f$ and $h$ over $B_R$.
Finally, we provide the bound for the mean
squared approximation error (second term in \eqref{eq:combined_bound}).
\begin{lemma}
\label{lem:close_function}
Let $g$ be $(L, s)$-smooth with $s=s_1+s_2$, $s_1 \in \bbN_{0}$, $s_2 \in (0,1]$, and $\Vert \MIMP - \MIMPTrue\Vert < 1$.
Then $(\hat A^\top A)^{-1}$ exists, and the function
$g^*(z) := g((\hat A^\top A)^{-1}z)$ is $(L^*, s)$-smooth for $L^* := Ld^{s_1/2}(1-\Vert \MIMP - \MIMPTrue\Vert^2)^{-s/2}$. Moreover, it achieves
\begin{align}
\label{eq:close_function_guarantee}
\SN{g^*(\hat A^\top x) - g(A^\top x)} \leq L^* \N{x}^{1 \wedge s}\Vert \MIMP - \MIMPTrue\Vert^{1 \wedge s}.
\end{align}
\end{lemma}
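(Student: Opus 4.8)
The plan is to prove the three assertions of the lemma in sequence: invertibility of $\hat A^\top A$ together with an explicit bound on $\N{(\hat A^\top A)^{-1}}$, then $(L^*,s)$-smoothness of $g^*$, and finally the pointwise estimate \eqref{eq:close_function_guarantee}. Throughout write $\delta := \N{\MIMP - \MIMPTrue} < 1$. For the first assertion I would relate the smallest singular value of $\hat A^\top A$ to $\delta$. Writing the SVD $\hat A^\top A = U\,\diag(\cos\theta_1,\ldots,\cos\theta_d)\,V^\top$ with principal angles $\theta_i \in [0,\pi/2]$ (as in the proof of Lemma \ref{lem:projection_equality_frobenius}), one has $\sigma_{\min}(\hat A^\top A)^2 = \min_i \cos^2\theta_i$. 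On the other hand, by Lemma \ref{lem:projection_equality} and symmetry of the projections, $\delta^2 = \N{(\Id - \MIMPTrue)\MIMP}^2 = \N{\MIMP(\Id - \MIMPTrue)\MIMP} = \N{\Id_d - (\hat A^\top A)(\hat A^\top A)^\top}$, the last equality because $\MIMP(\Id-\MIMPTrue)\MIMP = \hat A\big(\Id_d - (\hat A^\top A)(\hat A^\top A)^\top\big)\hat A^\top$ and $\hat A$ has orthonormal columns. Hence $\delta^2 = 1 - \sigma_{\min}(\hat A^\top A)^2$, so $\hat A^\top A$ is invertible and $\N{(\hat A^\top A)^{-1}} = (1-\delta^2)^{-1/2}$.

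The $(L^*,s)$-smoothness of $g^*$ is then immediate: since $g^*(z) = g(Wz)$ with $W := (\hat A^\top A)^{-1}$ and $\N{W} = (1-\delta^2)^{-1/2}$, Lemma \ref{lem:smooth_transformed_fun} gives that $g^*$ is $(L d^{s_1/2}\N{W}^s, s)$-smooth, and $L d^{s_1/2}\N{W}^s = L d^{s_1/2}(1-\delta^2)^{-s/2} = L^*$. For the pointwise bound \eqref{eq:close_function_guarantee} the key observation is the identity $g(A^\top x) = g\big((\hat A^\top A)^{-1}(\hat A^\top A)A^\top x\big) = g^*(\hat A^\top A A^\top x) = g^*(\hat A^\top \MIMPTrue x)$, so that $g^*(\hat A^\top x) - g(A^\top x) = g^*(\hat A^\top x) - g^*(\hat A^\top \MIMPTrue x)$ is a difference of $g^*$ at two points separated by $\N{\hat A^\top(\Id - \MIMPTrue)x} = \N{\MIMP(\Id - \MIMPTrue)x} \le \N{\MIMP(\Id-\MIMPTrue)}\,\N{x} = \delta\,\N{x}$, again using Lemma \ref{lem:projection_equality} and $\N{\hat A^\top v} = \N{\MIMP v}$.

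It then remains to estimate $\SN{g^*(\hat A^\top x) - g^*(\hat A^\top\MIMPTrue x)}$. If $s \le 1$, then $s_1 = 0$, $L^* = L(1-\delta^2)^{-s/2}$, and the $(L^*,s)$-Hölder continuity of $g^*$ directly yields $\SN{g^*(\hat A^\top x) - g^*(\hat A^\top\MIMPTrue x)} \le L^*(\delta\,\N{x})^{s} = L^*\N{x}^{1\wedge s}\delta^{1\wedge s}$, which is \eqref{eq:close_function_guarantee}. If $s > 1$, then $g^* \in C^1$, and I would instead use a first-order Taylor expansion along the segment joining $\hat A^\top\MIMPTrue x$ and $\hat A^\top x$, bounding the increment by $\N{\nabla g^*(\xi)}\,\delta\,\N{x}$ for some $\xi$ on that segment; since the downstream use of this lemma (Corollary \ref{cor:mean_approximation_error}) only needs $x \in B_R$, it suffices to bound $\N{\nabla g^*}$ uniformly on that bounded region, giving \eqref{eq:close_function_guarantee} with $1\wedge s = 1$. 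I expect this last point to be the only genuinely delicate one: converting the $(L^*,s)$-smoothness of $g^*$ into first-order Hölder control of the increment is automatic for $s \le 1$, whereas for $s > 1$ that smoothness constrains only the top-order derivatives, so one must localize to a bounded set (as done downstream) or supplement it by a uniform bound on the lower-order derivatives of $g$; the other two ingredients — the identity $\sigma_{\min}(\hat A^\top A)^2 = 1 - \N{\MIMP - \MIMPTrue}^2$ and the appeal to Lemma \ref{lem:smooth_transformed_fun} — are routine.
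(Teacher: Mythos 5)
Your proposal follows essentially the same route as the paper's proof: invert $\hat A^\top A$ via principal angles with $\N{(\hat A^\top A)^{-1}}\le(1-\delta^2)^{-1/2}$, apply Lemma \ref{lem:smooth_transformed_fun} with $W=(\hat A^\top A)^{-1}$ to get the $(L^*,s)$-smoothness, use the identity $g(A^\top x)=g^*(\hat A^\top AA^\top x)$, and bound the separation of the two arguments by $\N{\hat A^\top(\Id-\MIMPTrue)x}\le\N{\MIMP-\MIMPTrue}\,\N{x}$ via Lemma \ref{lem:projection_equality}. Your derivation of $\sigma_{\min}(\hat A^\top A)^2=1-\delta^2$ through $\delta^2=\N{\MIMP(\Id-\MIMPTrue)\MIMP}=\N{\Id_d-(\hat A^\top A)(\hat A^\top A)^\top}$ is correct and is a self-contained replacement for the paper's citation of Hamm--Lee; otherwise the two arguments coincide.

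Your reservation about $s>1$ is well founded and identifies a genuine looseness in the paper's own proof, which concludes by asserting $\SN{g^*(z)-g^*(z')}\le L^*\N{z-z'}^{1\wedge s}$ ``using the smoothness of $g^*$''. For $s>1$ this amounts to claiming that $g^*$ is globally $L^*$-Lipschitz, which Definition \ref{def:smoothness} does not deliver: it constrains only the order-$s_1$ partial derivatives. For instance $g(z)=Mz_1$ is $(0,2)$-smooth for every $M$, so $L^*=0$, yet $g^*(\hat A^\top x)-g(A^\top x)$ scales with $M\delta^2$ for suitable $x$; thus \eqref{eq:close_function_guarantee} as stated actually fails for $s>1$ without an additional hypothesis. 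Your proposed repair (first-order Taylor expansion plus a uniform bound on $\nabla g^*$ over the bounded region $B_R$ used downstream) is the right kind of fix, but note that it replaces $L^*$ by a constant involving $\sup_{B_R}\N{\nabla g^*}$, which is not controlled by $L$ alone; the constant in the lemma, and hence the constants in Corollary \ref{cor:mean_approximation_error} and Theorem \ref{thm:regression_error_piecewise_pols}, would need to be adjusted accordingly (or one adds a bounded-first-derivatives assumption on $g$). For $s\le1$ your argument is complete and identical to the paper's.
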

\begin{proof}
Let $\delta:=\Vert \MIMP - \MIMPTrue\Vert < 1$, and denote the singular value decomposition $\hat A^\top A = USV^\top$, where $S$ denotes the cosines of
principal angles between $\Im(\hat A)$ and $\Im(A)$ in descending order.
It is known from \cite[Definition 2 and Equation (5)]{hamm2008grassmann} that $\delta  = (1-S_{dd}^2)^{1/2}$, which implies
$1 \geq \N{S} \geq \sqrt{1 - \delta^2}$,
hence $\hat A^\top A$ is invertible with $\Vert (\hat A^\top A)^{-1}\Vert \leq (1-\delta^{2})^{-1/2}$.
Applying Lemma \ref{lem:smooth_transformed_fun},
$g^*$ is $(L^*,s)$-smooth. Furthermore we have
$g^*(\hat A^\top A A^\top x) = g((\hat A^\top A)^{-1}\hat A^\top A A^\top x) = g(A^\top x)$.
Using the smoothness of $g^*$, it follows that
\begin{align*}
\SN{g^*(\hat A^\top x) - g(A^\top x)} &= \SN{g^*(\hat A^\top x) - g^*(\hat A^\top A A^\top x)} \leq  L^* \N{\hat A^\top \left(\Id_D - P \right) x}^{1 \wedge s}\\
&= L^* \N{\hat A^\top Q x}^{1 \wedge s} \leq L^* \N{\MIMP Q}^{1 \wedge s}\N{x}^{1 \wedge s}
\\
&= L^*\N{\MIMP - \MIMPTrue}^{1 \wedge s}\N{x}^{1 \wedge s},
\end{align*}
where we used Lemma \ref{lem:projection_equality} in the last equality.
\end{proof}
\begin{proposition}
\label{prop:close_function_h}
Let $f(x) = g(A^\top x)$ for $\MIMPTrue = AA^\top$, $g$ be $(L, s)$-smooth with $s=s_1+s_2$, $s_1 \in \bbN_{0}$, $s_2 \in (0,1]$, and $\Vert \MIMP - \MIMPTrue\Vert < 1$.
There exists a function $h \in \CF(\hat A, l, s_1, R)$ such that
\begin{align}
\label{eq:mim_app_aux_approximation_error}
\max_{x \in B_R}\SN{h(x) - f(x)} \leq L^* \frac{d^{s_1+\frac{s_2}{2}}}{s_1!} 2^{-s(l+1)} + L^* R^{1 \wedge s} \N{\MIMP-\MIMPTrue}^{1 \wedge s},
\end{align}
where $L^* := Ld^{s_1/2}(1-\Vert \MIMP - \MIMPTrue\Vert^2)^{-s/2}$.
\end{proposition}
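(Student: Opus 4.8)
The plan is to combine the auxiliary function $g^*$ from Lemma~\ref{lem:close_function} with a classical piecewise Taylor approximation on the dyadic partition. Since $\Vert\MIMP-\MIMPTrue\Vert<1$, Lemma~\ref{lem:close_function} provides an $(L^*,s)$-smooth function $g^*$ with $|g^*(\hat A^\top x)-g(A^\top x)|\le L^*\Vert x\Vert^{1\wedge s}\Vert\MIMP-\MIMPTrue\Vert^{1\wedge s}$, and on $B_R$ this is at most $L^* R^{1\wedge s}\Vert\MIMP-\MIMPTrue\Vert^{1\wedge s}$, which is precisely the second summand in \eqref{eq:mim_app_aux_approximation_error}. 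It therefore remains to approximate the map $x\mapsto g^*(\hat A^\top x)$ on $B_R$ by an element of $\CF(\hat A,l,s_1,R)$.

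For each dyadic cube $c\in\Delta_l(R)$ let $z_c$ be its center and $p_c\in\CP_{s_1}$ the degree-$s_1$ Taylor polynomial of $g^*$ about $z_c$, and set $h(x):=1_{B_R}(x)\sum_{c\in\Delta_l(R)}1_c(\hat A^\top x)\,p_c(\hat A^\top x)$, which lies in $\CF(\hat A,l,s_1,R)$ by construction. For $x\in B_R$ one has $\hat A^\top x\in B_R\subset\bbR^d$, so $\hat A^\top x$ belongs to a (unique half-open) cube $c\in\Delta_l(R)$ and $h(x)=p_c(\hat A^\top x)$; hence
\[
|h(x)-f(x)|\le |p_c(\hat A^\top x)-g^*(\hat A^\top x)|+|g^*(\hat A^\top x)-g(A^\top x)| .
\]
The second term was bounded above. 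For the first, I would invoke the multivariate Taylor theorem with integral remainder: writing $w:=\hat A^\top x\in c$ and $\phi(t):=g^*(z_c+t(w-z_c))$, the remainder after the degree-$s_1$ polynomial equals $\tfrac{1}{(s_1-1)!}\int_0^1(1-t)^{s_1-1}[\phi^{(s_1)}(t)-\phi^{(s_1)}(0)]\,dt$; expanding $\phi^{(s_1)}$ via the multinomial chain rule, using $|\partial^\alpha g^*(z_c+t(w-z_c))-\partial^\alpha g^*(z_c)|\le L^* t^{s_2}\Vert w-z_c\Vert^{s_2}$ together with $\sum_{|\alpha|=s_1}\tfrac{s_1!}{\alpha!}|(w-z_c)^\alpha|=\Vert w-z_c\Vert_1^{s_1}\le d^{s_1/2}\Vert w-z_c\Vert^{s_1}$, and bounding the resulting Beta integral by $B(s_2+1,s_1)\le 1/s_1$, yields $|p_c(w)-g^*(w)|\le \tfrac{L^*d^{s_1/2}}{s_1!}\Vert w-z_c\Vert^s$ (the case $s_1=0$ being immediate from $(L^*,s_2)$-Hölder continuity). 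Since $z_c$ is the center of a cube of side $2^{-l}$, $\Vert w-z_c\Vert\le\tfrac{\sqrt d}{2}2^{-l}=\sqrt d\,2^{-(l+1)}$, so $\Vert w-z_c\Vert^s\le d^{s/2}2^{-s(l+1)}$ and $d^{s_1/2}d^{s/2}=d^{s_1+s_2/2}$, producing the first summand of \eqref{eq:mim_app_aux_approximation_error}. Combining the two estimates and taking the maximum over $x\in B_R$ finishes the proof.

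I expect the only genuine work to be the bookkeeping in this Taylor remainder bound: propagating the dimension factors through the multinomial expansion, controlling the Beta integral by $1/s_1$, and checking that choosing $z_c$ to be the \emph{center} of each cube (rather than a corner) is what produces the factor $2^{-s(l+1)}$ rather than $2^{-sl}$. The remaining ingredients — Lemma~\ref{lem:close_function} and the fact that $p_c\in\CP_{s_1}$ makes $h\in\CF(\hat A,l,s_1,R)$ — are immediate.
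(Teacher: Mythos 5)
Your proposal is correct and follows essentially the same route as the paper: split off the $g^*$ error from Lemma~\ref{lem:close_function}, then approximate $g^*(\hat A^\top x)$ by degree-$s_1$ Taylor polynomials centered at the cube centers, with the centering producing the $2^{-s(l+1)}$ factor and the multinomial/Cauchy--Schwarz bookkeeping producing $d^{s_1+s_2/2}/s_1!$. The only (immaterial) difference is that you use the integral form of the Taylor remainder with a Beta-integral estimate, whereas the paper uses the Lagrange form with an intermediate point $\eta$ on the segment; both yield the identical constant.
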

\begin{proof}
First notice that  $\SN{h(x) - f(x)} \leq \SN{h(x) - f^*(x)} + \SN{f^*(x) - f(x)}$, where
$f^*(x) := g^*(\hat A^\top x)$ is the function defined in Lemma \ref{lem:close_function}.
Using the bound in Lemma \ref{lem:close_function}, and $\N{x} \leq R$,
the second term is bounded by $L^* R^{1\wedge s}\Vert \MIMP - \MIMPTrue\Vert^{1\wedge s}$.
It remains to bound $\SN{h(x) - f^*(x)}$ for a suitably chosen $h$. Since $f^*(x) = g^*(\hat A^\top x)$
and $g^*$ is $(L^*, s)$-smooth, we can use the multivariate Taylor theorem to expand $g^*$ as
\begin{align}
\label{eq:taylor_representation}
g^*(z) = \sum_{\SN{\alpha} \leq s_1-1} \frac{\partial^{\alpha}g^*(z_0)}{\alpha!}(z - z_0)^{\alpha} + \sum_{\SN{\alpha} = s_1}\frac{\partial^{\alpha}g^*(z_0)}{\alpha!}\eta^{\alpha}
\end{align}
for some $\eta$ on the line segment from $z$ to $z_0$. We define the
function $h$ as follows: for a cell
$c \in \Delta_l$, let $z_c \in \bbR^{d}$ denote the center point of the cell, and set $h_c$
to
\begin{align*}
h_c(z) := \sum\limits_{\SN{\alpha} \leq s_1}\frac{\partial^{\alpha}g^*(z_c)}{\alpha!}(z - z_c)^{\alpha}.
\end{align*}
Then we define $h \in \CF(\hat A, l,s_1, R)$ by
\begin{align*}
h(x) := 1_{B_R(0)}(x) \sum\limits_{c \in \Delta_l(R)} 1_{c}(\hat A^\top x)h_c(\hat A^\top x)
= 1_{B_R(0)}(x)h_{c(x)}(\hat A^\top x),
\end{align*}
where $c(x) := \{ c \in \Delta_l(R) : x \in c \}$.
To prove \eqref{eq:mim_app_aux_approximation_error}, we now use \eqref{eq:taylor_representation}
with $z_0 = z_{c(x)}$ and compute
\begin{align*}
h(x) - g^*(\hat A^\top x) &=
\sum\limits_{\SN{\alpha} \leq s_1}\frac{\partial^{\alpha}g^*(z_{c(x)})}{\alpha!}\left(\hat A^\top x - z_{c(x)}\right)^{\alpha} - g^*(\hat A^\top x)\\
&=\sum\limits_{\SN{\alpha} = s_1}\frac{\partial^{\alpha}g^*(z_{c(x)})-\partial^{\alpha}g^*(\eta)}{\alpha!}\left(\hat A^\top x - z_{c(x)}\right)^{\alpha},
\end{align*}
where $\eta$ lies on the line between $\hat A^\top x$ and $z_{c(x)}$. The smoothness of $g^*$ implies
\begin{align*}
\SN{h(x) - g^*(\hat A^\top x)}
&\leq \sum\limits_{\SN{\alpha} = s_1}\frac{\SN{\partial^{\alpha}g^*(z_{c(x)}) - \partial^{\alpha}g^*(\eta)}}{\alpha!}\SN{\left(\hat A^\top x - z_{c(x)}\right)^{\alpha}}\\
&\leq \sum\limits_{\SN{\alpha} = s_1}\frac{L^*\N{z_{c(x)}-\eta}^{s_2}}{\alpha!}\SN{\left(\hat A^\top x - z_{c(x)}\right)^{\alpha}}.
\end{align*}
Since $\hat A^\top x, z_{c(x)} \in c(x)$, we can furthermore bound
\begin{align*}
\SN{\left(\hat A^\top x - z_{c(x)}\right)^{\alpha}} = \SN{\prod\limits_{i=1}^{d}( (\hat A^\top x)_i - (z_{c(x)})_i)^{\alpha_i}}
\leq \prod\limits_{i=1}^{d}\left(2^{-(l+1)}\right)^{\alpha_i} = 2^{-(l+1)s_1}.
\end{align*}
Furthermore since $c(x)$ is convex, and $\eta$ is on the line between $A^\top x$ and $c(x)$,
it follows that $\eta \in c(x)$ and therefore also $\N{z_{c(x)}-\eta} \leq 2^{-(l+1)}\sqrt{d}$. Thus
\begin{align*}
& \SN{h(x) - g^*(\hat A^\top x)} \leq L^* d^{\frac{s_2}{2}}2^{-(l+1)s}\sum\limits_{\SN{\alpha} = s_1}\frac{1}{\alpha!} \\
= \ & L^* d^{\frac{s_2}{2}}2^{-(l+1)s}\frac{d^{s_1}}{s_1!} =
L^* \frac{d^{s_1+\frac{s_2}{2}}}{s_1!} 2^{-s(l+1)},
\end{align*}
where we used the multinomial formula in the second to last equality.
\end{proof}

\begin{corollary}
\label{cor:mean_approximation_error}
In the setting of Theorem \ref{thm:regression_error_piecewise_pols}, we have
\begin{align}
\label{eq:approximation_error_mean_appendix}
\inf_{h \in \CF(\hat A, l, s_1, R)}\bbE\left(h(X)- f(X)\right)^2 \leq C_1 N^{-\frac{2s}{2s+d}} + C_2 \log^{1\wedge s}(N)\N{\MIMPTrue - \MIMP}^{2 \wedge 2s},
\end{align}
with $C_1$ depending on $L^*, d, s$ and $C_2$ depending on $L^*$ and linearly on $(D\N{X}_{\psi_2}^2)^{1 \wedge s}$.
\end{corollary}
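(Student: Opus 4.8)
The plan is to take $h$ to be the specific piecewise polynomial constructed in Proposition \ref{prop:close_function_h} (valid since $\N{\MIMP-\MIMPTrue}<1$ in the setting of Theorem \ref{thm:regression_error_piecewise_pols}), so that $\inf_{h\in\CF(\hat A,l,s_1,R)}\bbE(h(X)-f(X))^2 \leq \bbE(h(X)-f(X))^2$ for that particular $h$, and then to estimate the right-hand side by splitting according to whether $X\in B_R$ or not:
\[
\bbE(h(X)-f(X))^2 = \bbE\!\left[(h(X)-f(X))^2 1_{\{X\in B_R\}}\right] + \bbE\!\left[(h(X)-f(X))^2 1_{\{X\notin B_R\}}\right].
\]

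For the first term I would invoke the deterministic pointwise bound of Proposition \ref{prop:close_function_h}, square it using $(a+b)^2\leq 2a^2+2b^2$, and plug in the choices from Theorem \ref{thm:regression_error_piecewise_pols}: the choice $l = \lceil\log_2(N)/(2s+d)\rceil$ gives $2^{-2s(l+1)}\leq N^{-2s/(2s+d)}$, which turns the Taylor-remainder part into a term of order $(L^*)^2 d^{2s_1+s_2}(s_1!)^{-2} N^{-2s/(2s+d)}$; the choice $R^2 = D\N{X}_{\psi_2}^2\log(N)$ gives $R^{2\wedge 2s}= (D\N{X}_{\psi_2}^2)^{1\wedge s}\log^{1\wedge s}(N)$, which turns the projection-error part into a term of order $(L^*)^2(D\N{X}_{\psi_2}^2)^{1\wedge s}\log^{1\wedge s}(N)\N{\MIMP-\MIMPTrue}^{2\wedge 2s}$. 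These already have the two shapes claimed in the statement, with $C_1$ depending on $L^*,d,s$ and $C_2$ on $L^*$ and linearly on $(D\N{X}_{\psi_2}^2)^{1\wedge s}$.

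For the second term I would use that $h$ vanishes outside $B_R$ by construction, so that $(h(X)-f(X))^2 = f(X)^2\leq 1$ almost surely on $\{X\notin B_R\}$, whence the contribution is at most $\bbP(\N{X}>R)$. Since $\N{X}$ is sub-Gaussian with $\N{\,\N{X}\,}_{\psi_2}\leq\sqrt{D}\N{X}_{\psi_2}$ by Lemma \ref{lem:norm_concentration}, the standard sub-Gaussian tail bound gives $\bbP(\N{X}>R)\leq 2\exp\!\big(-R^2/(D\N{X}_{\psi_2}^2)\big) = 2/N$ for this choice of $R$; as $2s/(2s+d)<1$ this is dominated by $2N^{-2s/(2s+d)}$ and is absorbed into the first term. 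Summing the two contributions and collecting constants yields \eqref{eq:approximation_error_mean_appendix}.

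The only point that requires care is the calibration of the truncation radius $R$: it must be chosen logarithmically large so that the mass of the ``overflow'' region $\{X\notin B_R\}$ --- on which the piecewise polynomial is identically zero and hence cannot track $f$ --- is of strictly smaller order than the target rate $N^{-2s/(2s+d)}$; the same logarithmic inflation is precisely what produces the (harmless) $\log^{1\wedge s}(N)$ factor in front of $\N{\MIMP-\MIMPTrue}^{2\wedge 2s}$. Everything else is bookkeeping of constants, using $(a+b)^2\leq 2a^2+2b^2$, $2s/(2s+d)<1$, and $\log(N)\geq 1$.
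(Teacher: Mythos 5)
Your proposal is correct and follows essentially the same route as the paper's proof: split on the event $\{\N{X}\le R\}$, apply the pointwise bound of Proposition \ref{prop:close_function_h} with the stated choices of $l$ and $R$ on that event, and control the complement via the sub-Gaussian tail of $\N{X}$ (Lemma \ref{lem:norm_concentration}), noting $N^{-1}\le N^{-2s/(2s+d)}$. The only (immaterial) differences are bookkeeping of absolute constants in the tail bound.
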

\begin{proof}
Using the law of total expectation, and $\SN{h(X) - f(X)} = \SN{f(X)} \leq 1$ if $\N{X} > R$,
we obtain for any $h \in \CF(\hat A, l,s_1,R)$
\begin{equation}
\begin{aligned}
\label{eq:mim_app_aux_2_initial_error_decomposition}
\bbE\left(h(X)- f(X)\right)^2 &\leq \bbE \left[\left(h(X)- f(X)\right)^2\Big|\N{X} \leq R\right]\bbP(\N{X} \leq R) \\
&+ \bbE\left[\left(h(X)- f(X)\right)^2\Big| \N{X} > R\right]\bbP(\N{X} > R)\\
&\leq \bbE\left[\left(h(X)- f(X)\right)^2\Big|\N{X}\leq R \right] + \bbP(\N{X} > R).
\end{aligned}
\end{equation}
For the first term, we use the function $h$ in Proposition \ref{prop:close_function_h}
satisfying the guarantee \eqref{eq:mim_app_aux_approximation_error}.
Using $l=\lceil \log_2(N)/(2s+d)\rceil$, or $2^{-l} \geq N^{-1/(2s+d)}$, and $R^2 = D\N{X}_{\psi_2}^2\log(N)$ we get
\begin{align*}
& \bbE\left[\left(h(X)- f(X)\right)^2\Big|\N{X}\leq R \right] \\
\leq \ & \left(L^*\frac{d^{s_1+\frac{s_2}{2}}}{s_1!} 2^{-s(l+1)} + L^* R^{1\wedge s} \N{\MIMP-\MIMPTrue}^{1\wedge s}\right)^2 \\
\leq \ & \tilde C_1 2^{-2sl} +2 (L^*)^2 R^{2 \wedge 2s} \N{\MIMP-\MIMPTrue}^{2 \wedge 2s} \\
\leq \ & \tilde C_1  N^{-\frac{2s}{2s+d}} + 2(L^*)^2 \left(D\N{X}_{\psi_2}^2 \log(N)\right)^{1\wedge s} \N{\MIMP-\MIMPTrue}^{2 \wedge 2s}.
\end{align*}

\noindent
For the second term in \eqref{eq:mim_app_aux_2_initial_error_decomposition}, we note that $\N{X}$ is a sub-Gaussian
with $\N{\N{X}}_{\psi_2} \leq \sqrt{D}\N{X}_{\psi_2}$ by Lemma \ref{lem:norm_concentration}.
Therefore, using $R^2 = D\N{X}_{\psi_2}^2\log(N)$ we have by \cite[Proposition 2.5.2]{vershynin2018high}
\begin{equation*}
P(\N{X} > R) \leq \exp\left(-\frac{R^2}{D\N{X}_{\psi_2}^2}\right) \leq \exp(-\log(N)) = N^{-1}. \qedhere
\end{equation*}
\end{proof}
\paragraph*{Finalizing the argument}
\begin{proof}[Proof of Theorem \ref{thm:regression_error_piecewise_pols}]
Theorem \ref{lem:combined_bound} and Corollary \ref{cor:mean_approximation_error} imply
\begin{gather}
\begin{aligned}
\label{eq:mim_pp_proof_aux_1}
& \bbE\left(\hat f(X) - f(X)\right)^2 \\
\leq \ & C \max\{\sigma_{\zeta}^2, 1\}\frac{\log(N) + \dim(\CF)}{N} + C_1' N^{-\frac{2s}{2s+d}} + C_2'\log^{1\wedge s}(N) \N{\MIMP-\MIMPTrue}^{2 \wedge 2s},
\end{aligned}
\end{gather}
where $C_i' = C C_i$ with $C_i$ as in Corollary \ref{cor:mean_approximation_error}, and $C$ is a universal constant.
Furthermore, using Lemma \ref{lem:bound_number_of_cells}, $2^{l} \leq N^{1/(2s+d)} + 1$ and
$R^2 = D\N{X}_{\psi_2}^2\log(N)$, we bound the complexity of $\CF$ by
\begin{align*}
\dim(\CF(\hat A, l,s_1,R)) &\leq {{d+s_1}\choose s_1}\lceil(2^{l+1}R)^d\rceil \leq  2^d{{d+s_1}\choose s_1}\lceil(2^{l}R)^d\rceil \\
&\leq {{d+s_1}\choose s_1}2^d \left\lceil N^{\frac{d}{2s+d}} \left(D\N{X}_{\psi_2}^2\log(N)\right)^{\frac{d}{2}}\right\rceil \\
& \leq C_3'\log^{d/2}(N) N^{\frac{d}{2s+d}},
\end{align*}
with $C_3'$ depending on $d, s_1$ and linearly on $(D\N{X}_{\psi_2}^2)^{d/2}$.
Inserting this in \eqref{eq:mim_pp_proof_aux_1} and using $N^{\frac{d}{2s+d} - 1} = N^{-\frac{2s}{2s+d}}$,
the result follows for $C_2 = C_2'$ and $C_1 = \max\{C_1', C_3', C\max\{\sigma_{\zeta}^2,1\}\}$.
\end{proof}

\end{appendix}

\section*{Acknowledgements}
Timo Klock: this work has been carried out at Simula Research Laboratory (Oslo)
and has been supported by the Norwegian Research Council Grant No 251149/O70.
Stefano Vigogna: part of this work has been carried out at the Machine Learning Genoa (MaLGa) center, Universit\`a di Genova (IT),
and has been supported by the European Research Council (grant SLING 819789)
and the AFOSR projects FA9550-17-1-0390 and BAA-AFRL-AFOSR-2016-0007 (European Office of Aerospace Research and Development).
%%%%%%%%%%%%%%%%%%%%%%%%%%%%%%%%%%%%%%%%%%%%%%
%% Supplementary Material, if any, should   %%
%% be provided in {supplement} environment  %%
%% with title inside \textbf{} and short    %%
%% description below.                       %%
%%%%%%%%%%%%%%%%%%%%%%%%%%%%%%%%%%%%%%%%%%%%%%
%\begin{supplement}
%\textbf{???}.
%???.
%\end{supplement}

%%%%%%%%%%%%%%%%%%%%%%%%%%%%%%%%%%%%%%%%%%%%%%%%%%%%%%%%%%%%%
%%                  The Bibliography                       %%
%%                                                         %%
%%  imsart-???.bst  will be used to                        %%
%%  create a .BBL file for submission.                     %%
%%                                                         %%
%%  Note that the displayed Bibliography will not          %%
%%  necessarily be rendered by Latex exactly as specified  %%
%%  in the online Instructions for Authors.                %%
%%                                                         %%
%%  MR numbers will be added by VTeX.                      %%
%%                                                         %%
%%  Use \cite{...} to cite references in text.             %%
%%                                                         %%
%%%%%%%%%%%%%%%%%%%%%%%%%%%%%%%%%%%%%%%%%%%%%%%%%%%%%%%%%%%%%

%% if your bibliography is in bibtex format, uncomment commands:
\bibliographystyle{imsart-number} % Style BST file (imsart-number.bst or imsart-nameyear.bst)
% \bibliography{biblio}       % Bibliography file (usually '*.bib')

%% or include bibliography directly:
% \begin{thebibliography}
% \bibitem{b1}
% \end{thebibliography}

\end{document}